\crefname{equation}{}{}
\crefname{figure}{Fig.}{Figs.}
\crefname{appendix}{}{}
\crefname{table}{Tab.}{Tabs.}
\Crefname{ALC@unique}{Line}{Lines} 
\def\du{[\![}
\def\df{]\!]}
\DeclareMathOperator*{\essinf}{ess\,inf}
\DeclareMathOperator*{\esssup}{ess\,sup}
\def\ol{\overline}
\def\bxi{{\boldsymbol \xi}}
\def\psib{\boldsymbol{\psi}}
\newcommand{\mrm}{\mathsf{m}}
\newcommand{\HO}{\scalebox{.9}{${\scriptscriptstyle\rm H}$}}
\newcommand{\LO}{\scalebox{.9}{${\scriptscriptstyle\rm L}$}}
\newcommand{\CFL}{\scalebox{0.8}{${\rm CFL}$}}
\newcommand{\avg}[1]{\langle #1 \rangle} 
\DeclareMathAlphabet\mathbfcal{OMS}{cmsy}{b}{n}
\crefname{hypothesis}{Hypothesis}{Hypotheses}
\title{Maximum principle preserving time implicit DGSEM for nonlinear scalar conservation laws}
\author{
 Florent Renac\thanks{DAAA, ONERA, Universit\'e Paris Saclay, F-92322 Ch\^atillon, France ({\tt florent.renac@onera.fr}).} 
}
\begin{document}

\maketitle

\begin{abstract}
This work concerns the analysis of the discontinuous Galerkin spectral element method (DGSEM) with implicit time stepping for the numerical approximation of nonlinear scalar conservation laws in multiple space dimensions. We consider either the DGSEM with a backward Euler time stepping, or a space-time DGSEM discretization to remove the restriction on the time step. We design graph viscosities in space, and in time for the space-time DGSEM, to make the schemes maximum principle preserving and entropy stable for every admissible convex entropy. We also establish well-posedness of the discrete problems by showing existence and uniqueness of the solutions to the nonlinear implicit algebraic relations that need to be solved at each time step. Numerical experiments in one space dimension are presented to illustrate the properties of these schemes.
\end{abstract}

\begin{keywords}
hyperbolic scalar equations, maximum principle, entropy stability, discontinuous Galerkin method, summation-by-parts, backward Euler
\end{keywords}

\begin{AMS}
65M12, 65M70, 35L65
\end{AMS}


%
%
\section{Introduction}

We are here interested in the accurate and robust approximation of the following Cauchy problem with an hyperbolic scalar nonlinear conservation law in $d\geq1$ space dimensions:
\begin{subequations}\label{eq:hyp_cons_laws}
\begin{align}
 \partial_tu + \nabla\cdot{\bf f}(u) &= 0 \quad \mbox{in }\Omega\times(0,\infty), \label{eq:hyp_cons_laws-a} \\
 u(\cdot,0) &= u_{0}(\cdot)\quad\mbox{in }\Omega, \label{eq:hyp_cons_laws-b} 
\end{align}
\end{subequations}

\noindent with $\Omega\subset\mathbb{R}^d$ and $u_{0}$ in $L^\infty(\mathbb{R}^d,\mathbb{R})$. The solutions to \cref{eq:hyp_cons_laws} may develop discontinuities in finite time and \cref{eq:hyp_cons_laws} has to be understood in the sense of distributions where we look for weak solutions that are piecewise smooth solutions that satisfy jump relations at a surface of discontinuity. Weak solutions are not necessarily unique and \cref{eq:hyp_cons_laws} must be supplemented with further admissibility conditions to select the physical solution. We here focus on entropy inequalities for any admissible convex entropy $\eta(u)$ and smooth entropy flux ${\bf q}(u)=\int^u\eta'(w){\bf f}'(w)dw$ pair:
\begin{equation}\label{eq:PDE_entropy_ineq}
 \partial_t\eta(u) + \nabla\cdot{\bf q}(u) \leq 0 \quad \mbox{in }\Omega\times(0,\infty),
\end{equation}

\noindent in the sense of distributions. This leads to $L^2$ stability for a uniformly convex entropy \cite{Dafermos2016} and in one space dimension, $d=1$, an inequality for one entropy is enough to select the physical weak solution when the flux $f(u)$ is strictly convex \cite{Panov_1994}.


We assume a locally Lipschitz continuous flux ${\bf f}$, and entropy solutions to \cref{eq:hyp_cons_laws} are known to satisfy a maximum principle:
%
\begin{equation}\label{eq:PDE_max_principle}
 \essinf_{{\bf x}\in\Omega}u_0({\bf x})=:m \leq u({\bf x},t) \leq M:=\esssup_{{\bf x}\in\Omega}u_0({\bf x}) \quad \text{ a.e. in }\Omega\times(0,\infty).
\end{equation}

Let us introduce the Lipschitz constant $L_f=L_f(m,M)$ of ${\bf f}$:

\begin{equation}\label{eq:flux_lipschitz_cst}
 L_f := \sup_{m\leq u^+\neq u^-\leq M}\sup_{{\bf n}\in\mathbb{S}^{d-1}}\frac{\big|\big({\bf f}(u^+)-{\bf f}(u^-)\big)\cdot{\bf n}\big|}{|u^+-u^-|}.
\end{equation}

The design of the numerical schemes we consider in this work conveniently relies on the Riemann problem in the unit direction ${\bf n}$ in $\mathbb{S}^{d-1}$: given data $u^-$ and $u^+$, solve

\begin{subequations}\label{eq:Riemann_pb}
\begin{align}
 \partial_tu + \partial_x{\bf f}(u)\cdot{\bf n} &= 0 \quad \mbox{in }\mathbb{R}\times(0,\infty), \label{eq:Riemann_pb-a} \\
 u(x,0) &= u_{0}(x) := 
\left\{ \begin{array}{ll} u^- & \mbox{if }x:={\bf x}\cdot{\bf n}<0, \\ u^+ & \mbox{if }x>0. \end{array}\right. \label{eq:Riemann_pb-b}
\end{align}
\end{subequations}

The maximum wave speed in the solution to \cref{eq:Riemann_pb} is bounded by $L_f(m,M)$ in \cref{eq:flux_lipschitz_cst} with $m=\min(u^-,u^+)$ and $M=\max(u^-,u^+)$. Integrating \cref{eq:Riemann_pb} and the associated entropy inequality over space and time, we have for all $\alpha\geq L_f$ \cite{hll_83,guermond_popov_GV_16} 
\begin{subequations}\label{eq:sol_Riemann_pb}
\begin{align}
 m \leq {\cal W}(u^-,u^+,{\bf n}) &:= \frac{u^-+u^+}{2} - \frac{\big({\bf f}(u^+)-{\bf f}(u^-)\big)\cdot{\bf n}}{2\alpha} \leq M,  \label{eq:sol_Riemann_pb-a} \\
 \eta\big({\cal W}(u^-,u^+,{\bf n})\big) &\leq \frac{\eta(u^-)+\eta(u^+)}{2} - \frac{\big({\bf q}(u^+)-{\bf q}(u^-)\big)\cdot{\bf n}}{2\alpha}.  \label{eq:sol_Riemann_pb-b}
\end{align}
\end{subequations}


We are here interested in the approximation of \cref{eq:hyp_cons_laws} with a high-order discretization that satisfies the above properties at the discrete level. 
We consider the discontinuous Galerkin spectral element method (DGSEM) based on the collocation between interpolation and quadrature points \cite{kopriva_gassner10} and tensor products of one-dimensional function bases and quadrature rules. Using diagonal norm summation-by-parts (SBP) operators and the entropy conservative (EC) numerical fluxes from Tadmor \cite{tadmor87}, semi-discrete EC schemes have been derived in \cite{fisher_carpenter_13,carpenter_etal14} for nonlinear conservation laws and one given entropy. Entropy stable (ES) DGSEM on hexahedral \cite{winters_etal_16,wintermeyer_etal_17} and simplex \cite{chen_shu_17,Crean_etal_SBP_curved_2018}
meshes have been proposed for the given entropy by using the same framework. The particular form of the SBP operators allows to take into account the numerical quadratures that approximate integrals in the numerical scheme compared to other techniques that require their exact evaluation to satisfy the entropy inequality \cite{jiang_shu94,hiltebrand_mishra_14}. The DGSEM thus provides a general framework for the design of ES schemes for nonlinear systems of conservation laws. Numerical experiments in \cite{winters_etal_16,wintermeyer_etal_17,chen_shu_17,renac19,coquel_etal_DGSEM_BN_21} highlight the benefits on stability and robustness of the computations, though this not guarantees to preserve neither the entropy stability at the fully discrete level, nor positivity of the numerical solution. Designs of fully discrete ES and positive DGSEM have been proposed in \cite{despres98,renac17a,renac17b,PAZNER_idg_DGSEM20211,carlier_renac_IDP_22,RUEDARAMIREZ2022105627,Jiang_Lu_IDP_DG_18} among others. Likewise, these schemes are ES for one given entropy only, which may be not sufficient to capture the entropy weak solution as will be observed in the numerical experiments of \cref{sec:num_xp}.

These schemes are usually analyzed in semi-discrete form for which the time derivative is not discretized, or when coupled with explicit in time discretizations. Time explicit integration may however become prohibitive for long time simulations or when looking for stationary solutions due to the strong CFL restriction on the time step which gets smaller as the approximation order of the scheme increases to ensure either linear stability \cite{gassner_kopriva_disp_diss_11,atkins_shu_98,krivodonova_qui_CFL_DG_13}, or positivity of the approximate solution \cite{zhang_shu_10a,zhang2012maximum}. We here consider and analyze a DGSEM discretization in space associated with a time implicit integration to circumvent the CFL condition. We first consider a first-order backward Euler time integration, which constitutes an efficient time stepping when looking for steady-state solutions. We then consider a DGSEM discretization in time more adapted for time-resolved simulations. 

Little is known about the properties of time implicit DGSEM schemes, apart from the entropy stability which holds providing the semi-discrete scheme is ES due to the dissipative character of the backward Euler time integration. The works in \cite{QinShu_impt_positive_DG_18,MRR_BEDGSEM_23} analyze time implicit modal discontinuous Galerkin (DG) and DGSEM for the discretization of a 1D linear scalar hyperbolic equation and show that a lower bound on the time step is required for the cell-averaged solution to satisfy a maximum principle at the discrete level. A linear scaling limiter of the DG solution around its cell-average \cite{zhang_shu_10a} is then used to obtain a maximum principle preserving (MPP) scheme. Unfortunately, these schemes are not MPP in multiple space dimensions even on Cartesian grids neither for the full polynomial solution, nor for the cell-averaged solution \cite{ling_etal_pos_impl_DGM_18,MRR_BEDGSEM_23}. Related works on time implicit DG schemes concern radiative transfer equations with enriched function space \cite{ling_etal_pos_impl_DGM_18}, reduced order quadrature rules \cite{Xu_Shu_PP_DG_2022}, and limiters \cite{Xu_Shu_PP_DG_2022}; or the formulation of a constrained optimization problem \cite{vandervegt_lim_impl_DG_19}.

We aim at designing schemes that satisfy the maximum principle and all entropy inequalities. The MPP property alone may not be enough to capture the entropy weak solution. For instance, the local extrema diminishing method \cite{jameson_LED_95} is MPP, but may not converge to the entropy solution \cite[Lemma~3.2]{guermond_popov_IDP_CFE_scalar_17}. We here add a first-order graph viscosity to the DGSEM schemes. Graph viscosity is a common method of artificial dissipation based on the connectivity graph of the degrees of freedom (DOFs) \cite{jameson_LED_95,guermond_popov_GV_16,Guermond_etal_IDP_conv_lim_18,PAZNER_idg_DGSEM20211}. The graph viscosity is an efficient technique to impose invariant domain properties to the scheme such as positivity, maximum principle, or minimum entropy principle. 
Some limiters in high-order schemes also use low-order schemes with graph viscosity as building-blocks such as the flux-corrected transport limiter \cite{BORIS_Book_FCT_73,zalesak1979fully,Guermond_IDP_NS_2021,ern_guermond_IDP_DIRK_23,MRR_BEDGSEM_23}, or convex limiting \cite{Guermond_etal_IDP_conv_lim_18,PAZNER_idg_DGSEM20211}.

The graph viscosity is here based on the connectivity graph of the DOFs either within the mesh element when the DGSEM is associated to the backward-Euler time integration, or within the space-time discretization element for the space-time DGSEM. It is local to those elements and do not use connectivity with neighboring elements making its implementation straightforward. We derive estimates on the viscosity coefficients to make the schemes MPP and ES for all admissible convex entropies. We also analyze the well-posedness of the discrete problems and prove existence and uniqueness of the solutions to the nonlinear implicit discrete problems that need to be solved at each time step. One key ingredient to prove these properties is the derivation in \cref{th:hec_over_fan,th:ineq_oleta} of discrete counterparts to \cref{eq:sol_Riemann_pb} for the EC two-point fluxes in space. Though, those discrete counterparts are well known for positive and entropic three-point schemes and correspond to consistency relations with the integral forms of \cref{eq:hyp_cons_laws-a,eq:PDE_entropy_ineq} \cite[Th.~ 3.1]{hll_83}, they constitute new results for EC two-point fluxes up to our knowledge.

The paper is organized as follows. \Cref{sec:DGSEM_discr} describes the DGSEM discretization in space, recall some properties of the metric terms, and of the numerical fluxes. The properties of the DGSEM with a backward Euler time stepping and graph viscosity in space are analyzed in \cref{sec:DGSEM_2D_BE}, while the space-time DGSEM with space and time graph viscosity is described and analyzed in \cref{sec:space_time_DGSEM_2D}. The results are assessed by numerical experiments in one space dimension in \cref{sec:num_xp} and concluding remarks about this work are given in \cref{sec:conclusions}.

%
%
\section{DGSEM approximation in space}\label{sec:DGSEM_discr}

We here describe the semi-discrete weak formulation of problem \cref{eq:hyp_cons_laws} with the DGSEM. The domain is discretized with a shape-regular mesh $\Omega_h\subset\mathbb{R}^d$ consisting of nonoverlapping and nonempty open elements $\kappa$ and we assume that it forms a partition of $\Omega$. By $h:=\sup_{\kappa\in\Omega_h}\text{diam}\,\kappa$ we denote the mesh size. For the sake of clarity, we introduce the DGSEM in two space dimensions $d=2$, its extension (resp., restriction) to $d=3$ (resp., $d=1$) being straightforward.

\subsection{The DGSEM approximation in space}

We look for approximate solutions in the function space of discontinuous polynomials
\begin{equation*}
 {\cal V}_h^p=\{\phi\in L^2(\Omega_h):\;\phi|_{\kappa}\circ{\bf x}_\kappa\in{\cal Q}^p(I^2)\; \forall\kappa\in \Omega_h\},
\end{equation*}

\noindent where ${\cal Q}^p(I^2)$ denotes the space of functions over the master element $I^2:=\{\bxi=(\xi,\eta):\;-1\leq\xi,\eta\leq1\}$ formed by tensor products of polynomials of degree $p\geq1$ in each direction. Each physical element $\kappa$ is the image of $I^2$ through the mapping ${\bf x}={\bf x}_\kappa(\bxi)$. Likewise, each edge $e$ is the image of $I=[-1,1]$ through the mapping ${\bf x}={\bf x}_e(\xi)$. The approximate solution to \cref{eq:hyp_cons_laws} is sought under the form
\begin{equation}\label{eq:DGSEM_num_sol}
 u_h({\bf x},t)=\sum_{i,j=0}^p\phi_\kappa^{ij}({\bf x})U_\kappa^{ij}(t) \quad \forall{\bf x}\in\kappa,\, \kappa\in \Omega_h,\, \forall t\geq0,
\end{equation}

\noindent where $(U_\kappa^{ij})_{0\leq i,j\leq p}$ are the DOFs in the element $\kappa$. The subset $(\phi_\kappa^{ij})_{0\leq i,j\leq p}$ constitutes a basis of ${\cal V}_h^p$ restricted onto the element $\kappa$ and $(p+1)^2$ is its dimension. Let $(\ell_k)_{0\leq k\leq p}$ be the Lagrange interpolation polynomials in one space dimension associated to the Gauss-Lobatto quadrature nodes over $I$, $\xi_0=-1<\xi_1<\dots<\xi_p=1$:
\begin{equation}\label{eq:cardinalty_Lag_polynom}
 \ell_k(\xi_l)=\delta_{kl}, \quad 0\leq k,l \leq p,
\end{equation}

\noindent with  $\delta_{kl}$ the Kronecker delta. We use tensor products of these polynomials and of Gauss-Lobatto nodes (see \cref{fig:stencil_2D_DGSEM}):
\begin{equation}\label{eq:lag_basis}
 \phi_\kappa^{ij}({\bf x})=\phi_\kappa^{ij}({\bf x}_\kappa(\bxi))=\ell_i(\xi)\ell_j(\eta), \quad 0\leq i,j\leq p.
\end{equation}

\noindent which satisfy the following relation at quadrature points $\bxi_{i'j'}=(\xi_{i'},\xi_{j'})$ in $I^2$:
\begin{equation*}
 \phi_\kappa^{ij}({\bf x}_\kappa^{i'j'})=\delta_{ii'}\delta_{jj'}, \quad 0\leq i,j,i',j' \leq p, \quad {\bf x}_\kappa^{i'j'}:={\bf x}_\kappa(\bxi_{i'j'}),
\end{equation*}

\noindent so the DOFs correspond to point values of the solution: $U_\kappa^{ij}(t)=u_h({\bf x}_\kappa^{ij},t)$.

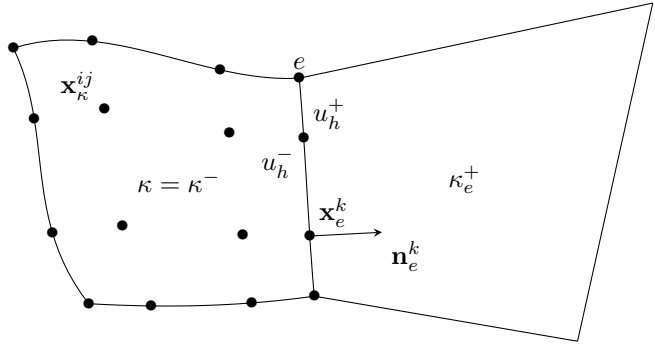
\begin{figure}[ht]
\begin{center}
\begin{tikzpicture}
[declare function={la(\x) = (\x+1/sqrt(5))/(-1+1/sqrt(5)) * (\x-1/sqrt(5))/(-1-1/sqrt(5)) * (\x-1)/(-2);
                   lb(\x) = (\x+1)/(-1/sqrt(5)+1) * (\x-1/sqrt(5))/(-2/sqrt(5)) * (\x-1)/(-1/sqrt(5)-1);
                   lc(\x) = (\x+1)/(1/sqrt(5)+1) * (\x+1/sqrt(5))/(2/sqrt(5)) * (\x-1)/(1/sqrt(5)-1);
                   ld(\x) = (\x+1)/(2) * (\x+1/sqrt(5))/(1+1/sqrt(5)) * (\x-1/sqrt(5))/(1-1/sqrt(5)); 
                   }]
%
%
\draw (1.2,1.625) node {$\kappa=\kappa^-$};
\draw (5.,1.65)   node {$\kappa_e^+$};
\draw (2.8,3.)    node[above] {$e$};
\draw (2.86,2.20) node[below left]  {$u_h^-$};
\draw (2.86,2.20) node[above right] {$u_h^+$};
\draw [>=stealth,->] (2.94,0.90) -- (3.9,0.95) ;
\draw (3.9,0.95) node[below right] {${\bf n}_e^k$};
\draw (2.94,0.90) node[above right] {${\bf x}_e^k$};
\draw (0.21,2.59) node[above left] {${\bf x}_\kappa^{ij}$};
\def\xad{-1.00};  \def\yad{3.40}; \def\xbd{0.05}; \def\ybd{3.49}; \def\xcd{1.75}; \def\ycd{3.11}; \def\xdd{2.80}; \def\ydd{3.00};
\def\xac{-0.723}; \def\yac{2.46}; \def\xbc{0.21}; \def\ybc{2.59}; \def\xcc{1.87}; \def\ycc{2.27}; \def\xdc{2.86}; \def\ydc{2.20};
\def\xab{-0.48};  \def\yab{0.94}; \def\xbb{0.45}; \def\ybb{1.03}; \def\xcb{2.05}; \def\ycb{0.91}; \def\xdb{2.94}; \def\ydb{0.90};
\def\xaa{0.00};   \def\yaa{0.00}; \def\xba{0.83}; \def\yba{-0.03}; \def\xca{2.17}; \def\yca{0.01}; \def\xda{3.00}; \def\yda{0.10};
\draw (\xad,\yad) node {$\bullet$}; \draw (\xbd,\ybd) node {$\bullet$}; \draw (\xcd,\ycd) node {$\bullet$}; \draw (\xdd,\ydd) node {$\bullet$}; 
\draw (\xac,\yac) node {$\bullet$}; \draw (\xbc,\ybc) node {$\bullet$}; \draw (\xcc,\ycc) node {$\bullet$}; \draw (\xdc,\ydc) node {$\bullet$}; 
\draw (\xab,\yab) node {$\bullet$}; \draw (\xbb,\ybb) node {$\bullet$}; \draw (\xcb,\ycb) node {$\bullet$}; \draw (\xdb,\ydb) node {$\bullet$}; 
\draw (\xaa,\yaa) node {$\bullet$}; \draw (\xba,\yba) node {$\bullet$}; \draw (\xca,\yca) node {$\bullet$}; \draw (\xda,\yda) node {$\bullet$}; 
\draw [domain=-1:1] plot ({la(\x)*\xaa+lb(\x)*\xba+lc(\x)*\xca+ld(\x)*\xda}, {la(\x)*\yaa+lb(\x)*\yba+lc(\x)*\yca+ld(\x)*\yda});
\draw [domain=-1:1] plot ({la(\x)*\xad+lb(\x)*\xbd+lc(\x)*\xcd+ld(\x)*\xdd}, {la(\x)*\yad+lb(\x)*\ybd+lc(\x)*\ycd+ld(\x)*\ydd});
\draw [domain=-1:1] plot ({la(\x)*\xda+lb(\x)*\xdb+lc(\x)*\xdc+ld(\x)*\xdd}, {la(\x)*\yda+lb(\x)*\ydb+lc(\x)*\ydc+ld(\x)*\ydd});
\draw [domain=-1:1] plot ({la(\x)*\xaa+lb(\x)*\xab+lc(\x)*\xac+ld(\x)*\xad}, {la(\x)*\yaa+lb(\x)*\yab+lc(\x)*\yac+ld(\x)*\yad});
%
\draw [>=stealth,-] (3.,0.1) -- (6.5,-0.5) ;
\draw [>=stealth,-] (6.5,-0.5) -- (7.5,4.) ;
\draw [>=stealth,-] (7.5,4.) -- (2.8,3.) ;
\end{tikzpicture}
\caption{Inner and outer elements, $\kappa^-$ and $\kappa_e^+$, for $d=2$; definitions of traces $u_h^\pm$ on the interface $e$ and of the unit outward normal vector ${\bf n}_e^k={\bf n}_e({\bf x}_e^k)$ with ${\bf x}_e^k={\bf x}_e(\xi_k)$; positions of quadrature points in $\kappa^-$, ${\bf x}_\kappa^{ij}={\bf x}_\kappa(\bxi_{ij})$, and on $e$, ${\bf x}_e^k$, for $p=3$ (bullets $\bullet$). The elements $\kappa$ are interpolated on the same grid of quadrature points as the numerical solution, i.e., ${\bf x}_\kappa(\bxi)=\sum_{i,j=0}^p\ell_i(\xi)\ell_j(\eta){\bf x}_\kappa^{ij}$.}
\label{fig:stencil_2D_DGSEM}
\end{center}
\end{figure}

The integrals over elements and faces are approximated by using the Gauss-Lobatto quadrature rules so the quadrature and interpolation nodes are collocated:
\begin{equation}\label{eq:GaussLobatto_quad}
 \int_{-1}^1 f(\xi)d\xi \simeq \sum_{k=0}^p \omega_k f(\xi_k),
\end{equation}

\noindent with $\omega_k>0$, satisfying $\sum_{k=0}^p\omega_k=\int_{-1}^1ds=2$, the weights and $\xi_k$ the nodes of the quadrature rule over $I$.

The semi-discrete form of the DGSEM in space is now standard and we refer to \cite{wintermeyer_etal_17} and references therein for details on its implementation on unstructured grids with high-order mesh elements. It reads 
\begin{equation}\label{eq:semi-discr_DGSEM}
 \omega_i\omega_j J_\kappa^{ij}\frac{dU_\kappa^{ij}}{dt} + R_\kappa^{ij}(u_h) = 0 \quad \forall\kappa\in \Omega_h,\; 0\leq i,j\leq p,\; t>0,
\end{equation}

\noindent with $J_\kappa^{ij}:=\det{\bf x}_\kappa'(\xi_i,\eta_j)>0$ and
\begin{align}
  R_\kappa^{ij}(u_h) &= 2\Big(\omega_j\sum_{k=0}^p Q^p_{ik}h_{ec}\big(U_\kappa^{ij},U_\kappa^{kj},{\bf n}_\kappa^{(ik)j}\big) + \omega_i\sum_{k=0}^p Q^p_{jk}h_{ec}\big(U_\kappa^{ij},U_\kappa^{ik},{\bf n}_\kappa^{i(jk)}\big)\Big) \nonumber \\
   &+ \sum_{e\in\partial\kappa}\sum_{k=0}^p \phi_\kappa^{ij}({\bf x}_e^k)\omega_kJ_e^k\Big(h\big(U_\kappa^{ij},u_h^{+}({\bf x}_e^{k},t),{\bf n}_e^k\big)-{\bf f}(U_\kappa^{ij})\cdot{\bf n}_e^k\Big), \label{eq:semi-discr_DGSEM-res}
\end{align}

\noindent where ${\bf n}_e^k={\bf n}_e({\bf x}_e^k)$, $J_e^k=\det{\bf x}_e'(\xi_k)>0$, and by \cref{eq:cardinalty_Lag_polynom}, $\phi_\kappa^{ij}({\bf x}_e^k)=1$ if ${\bf x}_\kappa^{ij}={\bf x}_e^k$ and $\phi_\kappa^{ij}({\bf x}_e^k)=0$ otherwise, while the two first arguments in $h(\cdot,\cdot,\cdot)$ correspond to the traces, $u_h^\mp({\bf x}_e^k,t)$, of the numerical solution at ${\bf x}_e^k$ on $e$ (see \cref{fig:stencil_2D_DGSEM}). The quantities ${\bf n}_\kappa^{(ik)j}$ and ${\bf n}_\kappa^{i(jk)}$ are defined in \cref{eq:def_Jkappa_mean} and have been introduced to keep conservation of the scheme \cite{wintermeyer_etal_17}.

\subsection{Metric terms}

The coefficients $Q^p_{kl}$ in \cref{eq:semi-discr_DGSEM-res} are defined from the entries of the discrete derivative matrix, $D^p_{kl}$, \cite{kopriva_book} as follows:
\begin{equation}\label{eq:nodalGL_diff_matrix}
 D^p_{kl} = \ell_l'(\xi_k), \quad Q^p_{kl}=\omega_kD^p_{kl}, \quad 0\leq k,l \leq p,
\end{equation}

\noindent where the exponent $p$ refers to the polynomial degree. As noticed in \cite{gassner_13}, the DGSEM satisfies the summation-by-parts (SBP) property \cite{STRAND_SBP_94}:

\begin{equation}\label{eq:SBP}
 Q^p_{kl}+Q^p_{lk}=\delta_{kp}\delta_{lp}-\delta_{k0}\delta_{l0}, \quad 0\leq k,l \leq p,
\end{equation}

\noindent which is the discrete counterpart to integration by parts. We will also use the following relations of partition of unity and integration of the Lagrange polynomial derivatives:
\begin{equation}\label{eq:interp_lag_unite_deriv}
 \sum_{l=0}^pQ^p_{kl} = \omega_k \sum_{l=0}^pD^p_{kl} = 0, \quad \sum_{l=0}^pQ^p_{lk} = \delta_{kp}-\delta_{k0} , \quad 0\leq k\leq p;
\end{equation}

\noindent and the metric identities \cite{kopriva_metric_id_06}, $\sum_{i=1}^d\partial_{\xi_i}\big(J_\kappa\nabla\xi_i\big)=0$, which may be written for $d=2$ at the discrete level as
\begin{equation*}
 \sum_{k=0}^p D^p_{ik}J_\kappa^{kj}\nabla\xi({\bf x}_\kappa^{kj}) + D^p_{jk}J_\kappa^{ik}\nabla\eta({\bf x}_\kappa^{ik}) = 0 \quad \forall \kappa\in\Omega_h,\; 0\leq i,j \leq p,
\end{equation*}

Combining the above relation with \cref{eq:interp_lag_unite_deriv} we get
\begin{equation}\label{eq:discr_Met_Id}
 \sum_{k=0}^p D^p_{ik}{\bf n}_\kappa^{(ik)j} + D^p_{jk}{\bf n}_\kappa^{i(jk)} = 0 \quad \forall \kappa\in\Omega_h,\; 0\leq i,j \leq p,
\end{equation}

\noindent where 
\begin{equation}\label{eq:def_Jkappa_mean}
 {\bf n}_\kappa^{(ik)j}=\frac{1}{2}\big(J_\kappa^{ij}\nabla\xi(\bxi_{ij})+J_\kappa^{kj}\nabla\xi(\bxi_{kj})\big), \; {\bf n}_\kappa^{i(jk)}=\frac{1}{2}\big(J_\kappa^{ij}\nabla\eta(\bxi_{ij})+J_\kappa^{ik}\nabla\eta(\bxi_{ik})\big).
\end{equation}

Further assuming the mesh is watertight \cite[App.~B.2]{HENNEMANN_etal_FVGDSEM_21}, i.e., the geometry is continuous across faces, the volume and face metric terms are related by the following relations when evaluated at faces (with some slight abuse in the notations, but without ambiguity)
\begin{subequations}\label{eq:link_vol_surf_metric}
\begin{align}
    {\bf n}_\kappa^{(pp)j} &=J_e({\bf x}_\kappa^{pj}){\bf n}_e({\bf x}_\kappa^{pj}), \quad {\bf n}_\kappa^{(00)j}=-J_e({\bf x}_\kappa^{0j}){\bf n}_e({\bf x}_\kappa^{0j}) \quad \forall 0\leq j\leq p, \\
    {\bf n}_\kappa^{i(pp)} &= J_e({\bf x}_\kappa^{ip}){\bf n}_e({\bf x}_\kappa^{ip}), \quad {\bf n}_\kappa^{i(00)}=-J_e({\bf x}_\kappa^{i0}){\bf n}_e({\bf x}_\kappa^{i0}) \quad \forall 0\leq i\leq p,
\end{align}
\end{subequations}

\noindent where ${\bf n}_e$ denotes the unit normal to $e$ in $\partial\kappa$ pointing outward from $\kappa$ (see \cref{fig:stencil_2D_DGSEM}).


We assume that the mesh is regular in the following sense: $h:=\sup_{\kappa\in\Omega_h}\text{diam}\,\kappa<\infty$ and there exists a finite $\beta=\beta(p,h)>0$ such that 
\begin{equation}\label{eq:regular_mesh_cond}
 J_\kappa^{ij} \geq \frac{\beta}{2d} h^d, \quad \frac{1}{4d\beta}h^{d-1}\geq J_e^k,|{\bf n}_\kappa^{(ik)j}|,|{\bf n}_\kappa^{i(jk)}|>0 \quad \forall \kappa \in\Omega_h, \; 0\leq i,j,k\leq p,
\end{equation}

\noindent which enforces classical regularity assumptions \cite[\S~24]{EYMARD2000713}: $|\kappa|:=\sum_{0\leq i,j\leq p}\omega_i\omega_jJ_\kappa^{ij}\geq \beta h^d$ and $|\partial\kappa|=\sum_{e\in\partial\kappa}\sum_{k=0}^p\omega_kJ_e^k\leq \tfrac{1}{\beta}h^{d-1}$.

Finally, we define the cell-average operator:
\begin{equation}\label{eq:cell-average}
 \langle u_h\rangle_\kappa(t) := \sum_{0\leq i,j\leq p}\omega_i\omega_j\frac{J_\kappa^{ij}}{|\kappa|}U_\kappa^{ij}(t).
\end{equation}

\subsection{Two-point numerical fluxes}\label{sec:two_point_fluxes}
\subsubsection{Cell two-point fluxes}
The numerical flux in the cells is smooth, consistent, $h_{ec}(u,u,{\bf n})={\bf f}(u)\cdot{\bf n}$, symmetric, $h_{ec}(u^-,u^+,{\bf n})=h_{ec}(u^+,u^-,{\bf n})$, linear in ${\bf n}$, and is EC \cite{tadmor87} for a given pair $\big(\vartheta(u),{\bf g}(u)\big)$ with $\vartheta$ strictly convex: 
\begin{equation}\label{eq:entropy_conserv_flux}
 \big(v(u^+)-v(u^-)\big) h_{ec}(u^-,u^+,{\bf n})  = \big(\psib(u^+)-\psib(u^-)\big)\cdot{\bf n} \quad \forall u^\pm,
\end{equation}

\noindent with $v(u)=\vartheta'(u)$ the entropy variable, $\psib(u)=v(u){\bf f}(u) - {\bf g}(u)$ the entropy flux potential, and ${\bf n}$ in $\mathbb{R}^d$. \Cref{th:hec_over_fan} states discrete counterparts to stability properties \cref{eq:sol_Riemann_pb} for the exact Riemann solution \cref{eq:Riemann_pb}. Together with \cref{th:ineq_oleta}, they are key results to establish the properties of the numerical schemes in \cref{sec:DGSEM_2D_BE,sec:space_time_DGSEM_2D}.

\begin{lemma}\label{th:hec_over_fan}
The EC flux \cref{eq:entropy_conserv_flux} for the entropy pair $\big(\vartheta(u),{\bf g}(u)\big)$ with $\vartheta''(\cdot)>0$ satisfies the following relations for all $\alpha\geq L_f$ and all convex entropy pairs $(\eta,{\bf q})$:

\begin{subequations}\label{eq:hec_over_fan}
\begin{align}
 m \leq {\cal U}(u^-,u^+,{\bf n}) &:= (1-\beta)u^-+\beta u^+ - \frac{h_{ec}(u^-,u^+,{\bf n})-{\bf f}(u^-)\cdot{\bf n}}{2\alpha|{\bf n}|} \leq M,  \label{eq:hec_over_fan-a} \\
 \eta\big({\cal U}(u^-,u^+,{\bf n})\big) &\leq \frac{\eta(u^-)+\ol{\eta}\big(u^-,u^+)}{2} - \frac{q_{ec}(u^-,u^+,{\bf n})-{\bf q}(u^-)\cdot{\bf n}}{2\alpha|{\bf n}|},  \label{eq:hec_over_fan-b}
\end{align}
\end{subequations}

\noindent with
\begin{subequations}
\begin{align} 
 \beta=\beta(u^-,u^+) &=\frac{1}{2}\int_0^1\frac{u\big(\theta v^++(1-\theta)v^-\big)-u^-}{u^+-u^-}d\theta \in \Big(0,\frac{1}{2}\Big], \label{eq:def_beta_in_Uec} \\
 \ol{\eta}(u^-,u^+) &=\int_0^1 \eta\circ u\big(\theta v^++(1-\theta)v^-\big)d\theta,  \label{eq:def_ol_eta} \\
 q_{ec}(u^-,u^+,{\bf n}) &=\int_0^1{\bf q}\circ u\big(\theta v^++(1-\theta)v^-\big)\cdot{\bf n}d\theta, \label{eq:def_qec}
\end{align}
\end{subequations}
\noindent where $v^\pm=v(u^\pm)=\vartheta'(u^\pm)$.
\end{lemma}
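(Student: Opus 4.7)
The plan is to realize the EC flux as an average over a straight-line path in the entropy-variable $v$, and then apply the elementary Riemann-solver inequalities \cref{eq:sol_Riemann_pb} pointwise along this path. First, I would derive the integral representation
\begin{equation*}
h_{ec}(u^-,u^+,{\bf n}) = \int_0^1 {\bf f}\bigl(\tilde u(\theta)\bigr)\cdot{\bf n}\,d\theta,\qquad \tilde u(\theta):=u\bigl(\theta v^++(1-\theta)v^-\bigr).
\end{equation*}
This follows from $\psib'(u)=v'(u){\bf f}(u)$, itself a consequence of $\psib=v{\bf f}-{\bf g}$ together with the compatibility relation ${\bf g}'=v{\bf f}'$, so that $d\psib/dv={\bf f}(u(v))$ and hence $\psib(u^+)-\psib(u^-)=(v^+-v^-)\int_0^1{\bf f}(\tilde u(\theta))\,d\theta$. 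Dividing Tadmor's condition \cref{eq:entropy_conserv_flux} by $v^+-v^-$ (the degenerate case $u^-=u^+$ being handled by consistency) yields the formula. Because $\vartheta$ is strictly convex, $v\mapsto u(v)$ is strictly monotone, so $\tilde u$ traces a monotone path from $u^-$ to $u^+$ and $\tilde u(\theta)\in[m,M]$ throughout.

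Next, for each $\theta\in[0,1]$ I would introduce the ``fractional'' Riemann average
\begin{equation*}
{\cal W}_\theta := \frac{u^-+\tilde u(\theta)}{2} - \frac{\bigl({\bf f}(\tilde u(\theta))-{\bf f}(u^-)\bigr)\cdot{\bf n}}{2\alpha|{\bf n}|}.
\end{equation*}
Since $\alpha\geq L_f(m,M)$ and $\tilde u(\theta)\in[m,M]$, inequality \cref{eq:sol_Riemann_pb-a} applied to the pair $(u^-,\tilde u(\theta))$ with unit normal ${\bf n}/|{\bf n}|$ gives $m\leq{\cal W}_\theta\leq M$. The key identification is
\begin{equation*}
{\cal U}(u^-,u^+,{\bf n}) = \int_0^1 {\cal W}_\theta\,d\theta,
\end{equation*}
which combines the integral representation of $h_{ec}$ (for the flux term) with definition \cref{eq:def_beta_in_Uec}: indeed $\int_0^1\tilde u(\theta)\,d\theta = u^-+2\beta(u^+-u^-)$, so $\frac12\bigl(u^-+\int_0^1\tilde u(\theta)\,d\theta\bigr)=(1-\beta)u^-+\beta u^+$. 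Averaging the pointwise bounds $m\leq{\cal W}_\theta\leq M$ over $\theta$ then yields \cref{eq:hec_over_fan-a}, and the range $\beta\in(0,\frac12]$ follows from $\tilde u(\theta)-u^-$ having constant sign and magnitude at most $|u^+-u^-|$.

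For \cref{eq:hec_over_fan-b} I would exploit convexity of $\eta$ together with Jensen's inequality on the same representation ${\cal U}=\int_0^1{\cal W}_\theta\,d\theta$ to obtain $\eta({\cal U})\leq\int_0^1\eta({\cal W}_\theta)\,d\theta$, then apply \cref{eq:sol_Riemann_pb-b} pointwise to bound each $\eta({\cal W}_\theta)$ by $\frac12\bigl(\eta(u^-)+\eta(\tilde u(\theta))\bigr)-\bigl({\bf q}(\tilde u(\theta))-{\bf q}(u^-)\bigr)\cdot{\bf n}/(2\alpha|{\bf n}|)$, and integrate in $\theta$; the definitions \cref{eq:def_ol_eta,eq:def_qec} of $\ol\eta$ and $q_{ec}$ then match term by term. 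The principal obstacle is really the first step: the explicit integral representation of $h_{ec}$ along the entropy-variable path together with the matching identity ${\cal U}=\int_0^1{\cal W}_\theta\,d\theta$. Once these are in hand, both claims of \cref{eq:hec_over_fan} reduce to the Riemann inequalities \cref{eq:sol_Riemann_pb} (via Jensen's inequality in the entropy case), and it is this $\theta$-decomposition that allows the single EC flux, built for the pair $(\vartheta,{\bf g})$, to be controlled uniformly by every admissible convex entropy pair $(\eta,{\bf q})$.
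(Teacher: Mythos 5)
Your proposal is correct and follows essentially the same route as the paper: the closed-form representation $h_{ec}(u^-,u^+,{\bf n})=\int_0^1{\bf f}\circ u(\theta v^++(1-\theta)v^-)\cdot{\bf n}\,d\theta$ obtained from the entropy flux potential, the identification of ${\cal U}$ as the $\theta$-average of the fractional Riemann averages ${\cal W}_\theta$ (which is exactly how the paper rewrites ${\cal U}$ before invoking \cref{eq:sol_Riemann_pb-a}), and Jensen's inequality followed by the pointwise application of \cref{eq:sol_Riemann_pb-b} for the entropy bound. The only cosmetic difference is that you give the integrand a name ${\cal W}_\theta$ and state the identity ${\cal U}=\int_0^1{\cal W}_\theta\,d\theta$ explicitly, which the paper does implicitly in a single display.
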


\begin{proof}
Note that \cref{th:hec_over_fan} holds when $u^-=u^+=u$ since ${\cal U}(u,u,{\bf n})=u$, $\beta(u,u)=\tfrac{1}{2}\int_0^1\theta d\theta=\tfrac{1}{4}$, and $h_{ec}(u,u,{\bf n})={\bf f}(u)\cdot{\bf n}$. We now assume $u^-\neq u^+$. Using the entropy variable $v(u)=\vartheta'(u)$, we have $\psib\circ u(v)=v{\bf f}\big(u(v)\big)-\int^v w{\bf f}'\big(u(w)\big)du(w)=\int^v{\bf f}\circ u(w)dw$ so the EC flux \cref{eq:entropy_conserv_flux} has the closed form
$$h_{ec}(u^-,u^+,{\bf n})=\int_0^1{\bf f}\circ u\big(\theta v^++(1-\theta)v^-\big)\cdot{\bf n}d\theta.$$

\noindent By strict convexity of $\vartheta(u)$, $u=u(v)$ is strictly increasing so $u\big(\theta v^++(1-\theta)v^-\big)$ lies in $[\min(u^\pm), \max(u^\pm)]\subset[m,M]$ and \cref{eq:def_beta_in_Uec} follows directly. Using \cref{eq:def_beta_in_Uec}, we rewrite \cref{eq:hec_over_fan-a} as
\begin{equation*}
 {\cal U}(u^-,u^+,{\bf n}) \!=\! \int_0^1 \frac{u\big(\theta v^++(1-\theta)v^-\big)+u^-}{2}-\frac{{\bf f}\circ u\big(\theta v^++(1-\theta)v^-\big)-{\bf f}(u ^-)}{2\alpha|{\bf n}|}\cdot{\bf n}d\theta
\end{equation*}

\noindent which lies in $[m,M]$ from \cref{eq:sol_Riemann_pb-a}. Likewise, using the Jensen's inequality, then \cref{eq:sol_Riemann_pb-b}, we obtain
%
\begin{multline*}
 \eta({\cal U}(u^-,u^+,{\bf n})) \\ \leq \int_0^1 \eta\bigg(\frac{u\big(\theta v^++(1-\theta)v^-\big)+u^-}{2}-\frac{{\bf f}\circ u\big(\theta v^++(1-\theta)v^-\big)-{\bf f}(u ^-)}{2\alpha|{\bf n}|}\cdot{\bf n}\bigg)d\theta \\
 \leq \int_0^1  \frac{\eta\circ u\big(\theta v^++(1-\theta)v^-\big)+\eta(u^-)}{2} -\frac{{\bf q}\circ u\big(\theta v^++(1-\theta)v^-\big)-{\bf q}(u ^-)}{2\alpha|{\bf n}|}\cdot{\bf n}d\theta
\end{multline*}

\noindent which completes the proof.
\end{proof}

\begin{lemma}\label{th:ineq_oleta}
Under the assumptions of \cref{th:hec_over_fan}, $\beta(u^-,u^+)$ in \cref{eq:def_beta_in_Uec} and $\ol{\eta}(u^-,u^+)$ in \cref{eq:def_ol_eta} satisfy
\begin{equation}\label{eq:ineq_oleta}
 \ol{\eta}(u^-,u^+) - \eta(u^-) \leq 2\beta(u^-,u^+)\big(\eta(u^+) - \eta(u^-)\big).
\end{equation}
\end{lemma}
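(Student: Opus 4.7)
The plan is to reduce the integral inequality to a pointwise convexity bound on the integrand. Both $\ol\eta(u^-,u^+)-\eta(u^-)$ and $2\beta(u^-,u^+)(\eta(u^+)-\eta(u^-))$ can be written as integrals from $0$ to $1$ against the variable $u_\theta:=u(\theta v^++(1-\theta)v^-)$. Specifically, by the definitions in \cref{eq:def_beta_in_Uec,eq:def_ol_eta}, the desired inequality is equivalent to
\begin{equation*}
 \int_0^1\big(\eta(u_\theta)-\eta(u^-)\big)d\theta \leq \int_0^1 \frac{u_\theta-u^-}{u^+-u^-}\big(\eta(u^+)-\eta(u^-)\big)d\theta,
\end{equation*}
and it suffices to prove the corresponding inequality pointwise in $\theta$ (the case $u^+=u^-$ being trivial since both sides then vanish).

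Next I would use the monotonicity of $v\mapsto u(v)$ that follows from $\vartheta''>0$: the map $\theta\mapsto u_\theta$ is monotone from $u^-$ at $\theta=0$ to $u^+$ at $\theta=1$, so $u_\theta$ lies in the closed segment with endpoints $u^-$ and $u^+$. Therefore we can write $u_\theta=(1-\lambda)u^-+\lambda u^+$ with $\lambda=\lambda(\theta)=(u_\theta-u^-)/(u^+-u^-)\in[0,1]$, regardless of the sign of $u^+-u^-$. Applying convexity of $\eta$ to this convex combination yields
\begin{equation*}
 \eta(u_\theta) \leq (1-\lambda)\eta(u^-)+\lambda\eta(u^+),
\end{equation*}
which, after subtracting $\eta(u^-)$ and using the definition of $\lambda$, is exactly the desired pointwise bound. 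Integrating over $\theta\in[0,1]$ then gives \cref{eq:ineq_oleta}.

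There is no real obstacle here: the only subtlety is to notice that the same affine parameterization $\lambda=(u_\theta-u^-)/(u^+-u^-)$ works whether $u^+>u^-$ or $u^+<u^-$ (the factor $(\eta(u^+)-\eta(u^-))/(u^+-u^-)$ takes care of the sign), so one proof handles all cases uniformly. The monotonicity of $u(v)$ guaranteed by strict convexity of the entropy $\vartheta$ is what ensures $\lambda(\theta)\in[0,1]$ and hence that convexity of $\eta$ applies.
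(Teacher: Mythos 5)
Your proof is correct. The heart of both arguments is the same convexity step: since $u(v)$ is increasing, the intermediate value $u_\theta=u(\theta v^++(1-\theta)v^-)$ lies on the segment between $u^-$ and $u^+$, and applying $\eta\big((1-\lambda)u^-+\lambda u^+\big)\le(1-\lambda)\eta(u^-)+\lambda\eta(u^+)$ with $\lambda=(u_\theta-u^-)/(u^+-u^-)\in[0,1]$ gives exactly the pointwise bound whose integral is \cref{eq:ineq_oleta}, because $2\beta(u^-,u^+)=\int_0^1\lambda(\theta)\,d\theta$ by \cref{eq:def_beta_in_Uec}. The paper reaches the same convexity inequality by a different packaging: it first performs the change of variables $u_\theta=\varsigma u^++(1-\varsigma)u^-$ (one-to-one by strict monotonicity of $u(v)$), so the convex-combination coefficient becomes the integration variable and a Jacobian factor $1/u'(v(\cdot))$ appears, and then it identifies $2\beta$ via the same substitution. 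Your route avoids the change of variables and the Jacobian bookkeeping entirely, and makes the identification of $2\beta$ immediate from its definition; the paper's version buys a slightly more explicit display of $\ol\eta$ as a weighted average of $\eta$ along the chord, but is otherwise equivalent. Your handling of the sign of $u^+-u^-$ and of the degenerate case $u^+=u^-$ is also sound.
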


\begin{proof}
Set again $v^\pm=v(u^\pm)=\vartheta'(u^\pm)$, \cref{eq:ineq_oleta} trivially holds for $u^+=u^-$ and we now assume $u^+\neq u^-$, thus $v^+\neq v^-$. Use the change of variables $u\big(\theta v^++(1-\theta)v^-)=\varsigma u^++(1-\varsigma)u^-$ in \cref{eq:def_ol_eta}, which is one-to-one since $u(v)$ is strictly increasing by convexity of $\vartheta(u)$ in \cref{th:hec_over_fan}, to obtain
\begin{align*}
 \ol{\eta}(u^-,u^+) &= \frac{u^+-u^-}{v^+-v^-}\int_0^1\frac{\eta\big(\varsigma u^++(1-\varsigma)u^-\big)}{u'\big(v\big(\varsigma u^++(1-\varsigma)u^-\big)\big)}d\varsigma \\
 &\leq \frac{u^+-u^-}{v^+-v^-}\int_0^1\frac{\varsigma\eta(u^+)+(1-\varsigma)\eta(u^-)}{u'\big(v\big(\varsigma u^++(1-\varsigma)u^-\big)\big)}d\varsigma \\
 &= 2\beta(u^-,u^+)\eta(u^+) + \big(1-2\beta(u^-,u^+)\big)\eta(u^-),
\end{align*}
\noindent where we have used convexity of $\eta(\cdot)$, then $2\beta(u^-,u^+)=\tfrac{u^+-u^-}{v^+-v^-}\int_0^1\frac{\varsigma d\varsigma}{u'(v(\varsigma u^++(1-\varsigma)u^-))}$ with the same change of variables. This completes the proof.
\end{proof}

\begin{remark}[square entropy]\label{rk:square_entropy1}
For the square entropy, $\vartheta(u)=\tfrac{u^2}{2}$, the entropy flux reads ${\bf g}(u)=\int^uw {\bf f}'(w)dw$, and we have $v(u)=u$, $h_{ec}(u^-,u^+,{\bf n})=\int_0^1{\bf f}(\theta u^++(1-\theta)u^-)\cdot{\bf n}d\theta$, $\beta(u^-,u^+)=\tfrac{1}{4}$ in \cref{th:hec_over_fan}, while $\ol{\eta}(u^-,u^+)=\int_0^1\eta(\theta u^++(1-\theta)u^-)d\theta\leq\tfrac{\eta(u^+)+\eta(u^-)}{2}$ in agreement with \cref{th:ineq_oleta}.
\end{remark}

\subsubsection{Interface two-point fluxes}

The numerical flux at interfaces is Lipschitz continuous, consistent, $h(u,u,{\bf n}) = {\bf f}(u)\cdot{\bf n}$, conservative, $h(u^-,u^+,{\bf n}) =-h(u^+,u^-,-{\bf n})$, and monotone: $h(u^-,u^+,{\bf n})$ is nondecreasing with $u^-$ and nonincreasing with $u^+$. The Lipschitz constants of $h$ with respect to its two first arguments satisfy \cref{eq:flux_lipschitz_cst}: $L_1+L_2\geq L_f$. The associated three-point scheme is therefore MPP and ES \cite[ch.~4]{godlewski_raviart_book1991}: for all $m\leq u^-,u,u^+ \leq M$, ${\bf n}\in\mathbb{S}^{d-1}$, and $\alpha\geq L_f$, we have
\begin{equation}\label{eq:MPP_interface_flux}
m \leq u - \frac{h(u,u^+,{\bf n}) - h(u^-,u,{\bf n})}{2\alpha} \leq M, 
\end{equation}

\noindent and
\begin{equation}\label{eq:ES_interface_flux}
\eta\Big(u - \frac{h(u,u^+,{\bf n}) - h(u^-,u,{\bf n})}{2\alpha}\Big) \leq \eta(u) - \frac{Q(u,u^+,{\bf n}) - Q(u^-,u,{\bf n})}{2\alpha}, 
\end{equation}

\noindent with the associated consistent, $Q(u,u,{\bf n})={\bf q}(u)\cdot{\bf n}$, and conservative, $Q(u^-,u^+,{\bf n})=-Q(u^+,u^-,-{\bf n})$, numerical entropy flux. The Godunov \cite{godunov_59}, Engquist-Osher \cite{engquist_osher_1981} and the Rusanov \cite{FriedrichsLax_71} schemes satisfy the above properties.

%
%
\section{Time implicit discretization and graph viscosity}\label{sec:DGSEM_2D_BE}

We now focus on a time implicit discretization with a backward Euler method in \cref{eq:semi-discr_DGSEM}. The fully discrete scheme reads
\begin{equation}\label{eq:fully-discr_DGSEM}
  \omega_i\omega_j J_\kappa^{ij} \frac{U_\kappa^{ij,n+1}-U_\kappa^{ij,n}}{\Delta t^{(n)}} + R_\kappa^{ij}(u_h^{(n+1)}) + V_\kappa^{ij}(u_h^{(n+1)}) = 0, \; \kappa\in \Omega_h, \; 0\leq i,j\leq p, \; n\geq 0,
\end{equation} 

\noindent where the residuals $R_\kappa^{ij}(\cdot)$ are defined by \cref{eq:semi-discr_DGSEM-res}, $\Delta t^{(n)}=t^{(n+1)}-t^{(n)}>0$, with $t^{(0)}=0$, denotes the time step, and $U_\kappa^{ij,n}=U_\kappa^{ij}(t^{(n)})$. The projection of the initial condition \cref{eq:hyp_cons_laws-b} onto the function space reads $U_\kappa^{ij,0} = u_0({\bf x}_\kappa^{ij})$.

The artificial viscosity term is a graph viscosity \cite{guermond_popov_GV_16} of the form
\begin{equation}\label{eq:graph_viscosity}
 V_\kappa^{ij}(u_h) := d_\kappa \omega_i\omega_j\sum_{k=0}^p\frac{\omega_k}{2}|{\bf n}_\kappa^{(ik)j}|(U_\kappa^{ij}-U_\kappa^{kj})+\frac{\omega_k}{2}|{\bf n}_\kappa^{i(jk)}|(U_\kappa^{ij}-U_\kappa^{ik}).
\end{equation}

One recovers the unlimited time implicit high-order in space scheme when $d_\kappa=0$ for all $\kappa\in\Omega_h$. The scheme with $d_\kappa>0$ is formally first-order accurate in space. We will look for conditions on $d_\kappa$ for the scheme \cref{eq:fully-discr_DGSEM} to satisfy the desired properties.

The method is conservative in the sense that by summing \cref{eq:fully-discr_DGSEM} over $0\leq i,j\leq p$ gives for the cell-averaged solution \cref{eq:cell-average}
\begin{equation}\label{eq:DGSEM_cell_averaged_2d}
 \langle u_h^{(n+1)}\rangle_\kappa = \langle u_h^{(n)}\rangle_\kappa - \frac{\Delta t^{(n)}}{|\kappa|}\sum_{e\in\partial\kappa}\sum_{k=0}^p\omega_kJ_e^k h\big(u_h^-({\bf x}_e^{k},t^{(n+1)}),u_h^+({\bf x}_e^{k},t^{(n+1)}),{\bf n}_e^k\big),
\end{equation}

\noindent providing that the discrete metric identities \cref{eq:discr_Met_Id} are satisfied \cite{kopriva_metric_id_06}.

We now characterize the properties of the DGSEM scheme \cref{eq:fully-discr_DGSEM} in \cref{th:ES_BE_DGSEM_noGV} for the unlimited scheme without artificial viscosity, and in \cref{th:exist_uniq_sol_BE_GV,th:ES_sol_BE_GV} for the scheme with graph viscosity. \Cref{th:ES_BE_DGSEM_noGV} is not a new result, but is recalled for the sake of comparison with \cref{th:exist_uniq_sol_BE_GV,th:ES_sol_BE_GV}.

\begin{lemma}\label{th:ES_BE_DGSEM_noGV}
The scheme \cref{eq:fully-discr_DGSEM} without artificial viscosity, $d_\kappa=0\;\forall\kappa\in\Omega_h$, and with the EC flux \cref{eq:entropy_conserv_flux} defined for a given entropy pair $\big(\vartheta(u),{\bf g}(u)\big)$, with $\vartheta''(\cdot)>0$, satisfies the following discrete entropy inequality for this pair:
\begin{align*}
 \langle \vartheta(u_h^{(n+1)})\rangle_\kappa \!\leq\! \langle \vartheta(u_h^{(n)})\rangle_\kappa \!-\! \frac{\Delta t^{(n)}}{|\kappa|}\!\!\!\sum_{e\in\partial\kappa}\sum_{k=0}^p\!\omega_kJ_e^k G_{ec}\big(u_h^-({\bf x}_e^{k},t^{(n+1)}),u_h^+({\bf x}_e^{k},t^{(n+1)}),{\bf n}_e^k\big).
\end{align*}
\noindent with $G_{ec}(u^-,u^+,{\bf n})=\tfrac{1}{2}h(u^-,u^+,{\bf n})(\vartheta'(u^-)+\vartheta'(u^+)) - \tfrac{1}{2}\big(\psib(u^-)+\psib(u^+)\big)\cdot{\bf n}$.
\end{lemma}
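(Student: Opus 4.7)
The plan is to test the scheme \cref{eq:fully-discr_DGSEM} (with $d_\kappa=0$) by the entropy variable $v_\kappa^{ij,n+1}:=\vartheta'(U_\kappa^{ij,n+1})$ evaluated at the new time level and sum over $0\leq i,j\leq p$ in a fixed element $\kappa$. Testing at $t^{(n+1)}$ is what yields the dissipativity of backward Euler; the rest is the standard SBP/EC algebra.

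First, the time derivative term. By convexity of $\vartheta$ we have, DOF by DOF,
\begin{equation*}
 \vartheta'(U_\kappa^{ij,n+1})\big(U_\kappa^{ij,n+1}-U_\kappa^{ij,n}\big) \geq \vartheta(U_\kappa^{ij,n+1})-\vartheta(U_\kappa^{ij,n}),
\end{equation*}
so that weighting by $\omega_i\omega_jJ_\kappa^{ij}$ and summing bounds the time contribution from below by $|\kappa|\big(\langle\vartheta(u_h^{(n+1)})\rangle_\kappa-\langle\vartheta(u_h^{(n)})\rangle_\kappa\big)$.

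Second, the volume terms in $R_\kappa^{ij}$. Using symmetry of $h_{ec}$ and the SBP property \cref{eq:SBP}, the double sums $\sum_{ijk}v_\kappa^{ij,n+1}\omega_j Q^p_{ik}h_{ec}(U_\kappa^{ij,n+1},U_\kappa^{kj,n+1},{\bf n}_\kappa^{(ik)j})$ can be split into a purely antisymmetric interior part plus a boundary contribution at $k=0,p$. The antisymmetric part is rearranged as $\sum\omega_jQ^p_{ik}(v_\kappa^{ij,n+1}-v_\kappa^{kj,n+1})h_{ec}(\cdot,\cdot,\cdot)$, and the EC relation \cref{eq:entropy_conserv_flux} converts each such jump into a jump of the entropy flux potential $\psib$; the metric identity \cref{eq:discr_Met_Id} then guarantees that these interior jumps telescope exactly to zero. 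What remains is the boundary contribution at the element faces, which by \cref{eq:link_vol_surf_metric} can be written as $\sum_{e\in\partial\kappa}\sum_k\omega_kJ_e^k\psib(u_h^-({\bf x}_e^k,t^{(n+1)}))\cdot{\bf n}_e^k$ (plus the analogous $\eta$ direction), together with the explicit $-{\bf f}(U_\kappa^{ij})\cdot{\bf n}_e^k$ piece from \cref{eq:semi-discr_DGSEM-res}. Combining these with the surface flux $h(u_h^-,u_h^+,{\bf n}_e^k)$ contracted against $v^-$ yields, after symmetrization with the contribution coming from the neighboring element $\kappa_e^+$, exactly $G_{ec}(u_h^-,u_h^+,{\bf n}_e^k)$ plus a residual proportional to $(v^+-v^-)h(u^-,u^+,{\bf n})-(\psib(u^+)-\psib(u^-))\cdot{\bf n}$, which is $\leq0$ by Tadmor's ES condition that follows from monotonicity of $h$ (this is essentially \cref{eq:ES_interface_flux}).

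Collecting the three contributions, dividing by $|\kappa|$, and using conservativity of $G_{ec}$ to drop the $\pm$ labels on faces shared with neighbors, one obtains the stated inequality. The main technical obstacle is the bookkeeping in the second step: one must carefully combine the tensor-product SBP boundary terms in both reference directions with the surface residual and the two-sided contributions at interfaces in order to recognize $G_{ec}$. However, this manipulation is by now standard for SBP/EC DGSEM schemes and can be invoked from \cite{carpenter_etal14,wintermeyer_etal_17}; the only new element here is the use of the backward Euler step to generate the one-sided convex inequality that replaces the usual semi-discrete time derivative.
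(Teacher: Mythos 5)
Your proposal is correct and follows essentially the same route as the paper: test the scheme with $\vartheta'(U_\kappa^{ij,n+1})$, use convexity of $\vartheta$ to bound the backward Euler term from below by the entropy difference, and invoke the standard SBP/EC algebra (as in \cite{wintermeyer_etal_17}) for the entropy stability of the spatial residual. The paper's proof simply cites the semi-discrete ES result rather than sketching the SBP bookkeeping, but the argument is the same.
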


\begin{proof}
We multiply \cref{eq:fully-discr_DGSEM} with $\vartheta'(U_\kappa^{ij,n+1})$ and sum up over $0\leq i,j\leq p$. The space discretization with the EC flux \cref{eq:entropy_conserv_flux} is known to be ES, see, e.g., \cite{wintermeyer_etal_17}. Using convexity of the entropy, we get $\vartheta'(U_\kappa^{ij,n+1})(U_\kappa^{ij,n+1}-U_\kappa^{ij,n})\geq\vartheta(U_\kappa^{ij,n+1})-\vartheta(U_\kappa^{ij,n})$ which completes the proof.
\end{proof}

\begin{theorem}\label{th:exist_uniq_sol_BE_GV}
The scheme \cref{eq:fully-discr_DGSEM} with the EC flux \cref{eq:entropy_conserv_flux} defined for a given entropy pair $\big(\vartheta(u),{\bf g}(u)\big)$ with $\vartheta''(\cdot)>0$, and the graph viscosity in \cref{eq:graph_viscosity} such that
\begin{equation}\label{eq:condition_on_dkappa}
 d_\kappa \geq 4L_fL_u\max_{k\neq l}\frac{|D^p_{kl}|}{\omega_l} \quad \forall \kappa\in\Omega_h, \quad L_u := \sup_{m\leq u^+\neq u^-\leq M}\frac{\vartheta''(u^+)(u^+-u^-)}{v(u^+)-v(u^-)},
\end{equation}

\noindent with $v(u)=\vartheta'(u)$ and $L_f$ defined in \cref{eq:flux_lipschitz_cst}, has a unique solution satisfying the maximum principle: $m\leq U_\kappa^{ij,n+1}\leq M$ for all $\kappa\in\Omega_h$ and $0\leq i,j\leq p$.
\end{theorem}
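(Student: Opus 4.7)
The plan is to recast \cref{eq:fully-discr_DGSEM} at each DOF as an explicit convex combination
\[
U_\kappa^{ij,n+1}=\frac{\omega_i\omega_jJ_\kappa^{ij}U_\kappa^{ij,n}+\Delta t^{(n)}\sum_\ell c_\ell W_\ell}{\omega_i\omega_jJ_\kappa^{ij}+\Delta t^{(n)}\sum_\ell c_\ell},\qquad c_\ell\geq 0,\; W_\ell\in[m,M],
\]
from which the maximum principle follows by a standard max-DOF argument, existence by Brouwer's theorem on the compact convex set $\{u_h\in{\cal V}_h^p:U_\kappa^{ij}\in[m,M]\}$, and uniqueness by a monotonicity estimate in the entropy variable. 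The graph viscosity, sized by \cref{eq:condition_on_dkappa}, is tuned precisely to absorb the sign-indefinite contributions of the EC fluxes uncovered by \cref{th:hec_over_fan}.

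The core algebraic step is to write $R_\kappa^{ij}(u_h^{(n+1)})+V_\kappa^{ij}(u_h^{(n+1)})=\sum_\ell c_\ell(U_\kappa^{ij,n+1}-W_\ell)$ with $c_\ell\geq 0$ and $W_\ell\in[m,M]$. First, by subtracting the trivial sum $\omega_j\sum_k Q_{ik}^p{\bf f}(U_\kappa^{ij})\cdot{\bf n}_\kappa^{(ik)j}+\omega_i\sum_k Q_{jk}^p{\bf f}(U_\kappa^{ij})\cdot{\bf n}_\kappa^{i(jk)}=0$, which vanishes by the discrete metric identity \cref{eq:discr_Met_Id}, and using consistency of $h_{ec}$ on the diagonal, the volume residual reduces to sums over off-diagonal pairs of increments $h_{ec}-{\bf f}(U_\kappa^{ij})\cdot{\bf n}$. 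Applying \cref{th:hec_over_fan} with $\alpha=L_f$ and the rearrangement $(1-\beta)u^-+\beta u^+-{\cal U}=(u^--{\cal U})-\beta(u^--u^+)$, each pair splits into a ${\cal U}$-term $4L_f\omega_jQ_{ik}^p|{\bf n}_\kappa^{(ik)j}|(U_\kappa^{ij}-{\cal U}_{ik}^j)$ and a viscous-difference term $-4L_f\omega_jQ_{ik}^p\beta_{ik}^j|{\bf n}_\kappa^{(ik)j}|(U_\kappa^{ij}-U_\kappa^{kj})$, with ${\cal U}_{ik}^j\in[\min(U_\kappa^{ij},U_\kappa^{kj}),\max(U_\kappa^{ij},U_\kappa^{kj})]$. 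For $Q_{ik}^p<0$ the ${\cal U}$-coefficient has the wrong sign, which I repair by invoking the symmetry $h_{ec}(u^-,u^+,{\bf n})=h_{ec}(u^+,u^-,{\bf n})$ and reapplying \cref{th:hec_over_fan} with base state $U_\kappa^{kj}$, together with the identity $\beta(u^-,u^+)+\beta(u^+,u^-)=\tfrac12$ (easily verified via $\theta\mapsto 1-\theta$ in \cref{eq:def_beta_in_Uec}). The graph viscosity then contributes $\tfrac12 d_\kappa\omega_i\omega_j\omega_k|{\bf n}_\kappa^{(ik)j}|(U_\kappa^{ij}-U_\kappa^{kj})$, and the bound $4\beta\leq L_u$, derived from the representation $2\beta=\int_0^1\varsigma\vartheta''(s_\varsigma)d\varsigma/\int_0^1\vartheta''(s_\varsigma)d\varsigma$ with $s_\varsigma=\varsigma u^++(1-\varsigma)u^-$ combined with the definition of $L_u$, ensures under \cref{eq:condition_on_dkappa} that the coefficient of $(U_\kappa^{ij}-U_\kappa^{kj})$ is nonnegative for every pair. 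The face contribution $h(U_\kappa^{ij},u_h^+,{\bf n}_e^k)-{\bf f}(U_\kappa^{ij})\cdot{\bf n}_e^k$ is put in the form $c(U_\kappa^{ij}-W)$ with $c\geq 0$ and $W\in[m,M]$ directly by the MPP identity \cref{eq:MPP_interface_flux} for the monotone three-point scheme.

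Once the decomposition is in place, the maximum principle follows from the convex combination by the standard max-DOF argument: at a DOF realizing $U^\star:=\max_{\kappa,i,j}U_\kappa^{ij,n+1}$ every $W_\ell$ is at most $U^\star$ (being either a ${\cal U}$-value sandwiched between two DOFs or another DOF value at $t^{(n+1)}$), whence $U^\star\leq\lambda_0 M+(1-\lambda_0)U^\star$ and thus $U^\star\leq M$; the lower bound is symmetric. Existence then follows from Brouwer's theorem applied to the continuous self-map of $\{u_h\in{\cal V}_h^p:U_\kappa^{ij}\in[m,M]\}$ defined by the convex combination formula, whose coefficients $c_\ell$ and states $W_\ell$ depend continuously on $u_h^{(n+1)}$ through $\beta$ and ${\cal U}$. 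For uniqueness I take two MPP solutions $u_h,\tilde u_h$, subtract their equations, test against the entropy-variable difference $v(U_\kappa^{ij})-v(\tilde U_\kappa^{ij})$, and sum over all DOFs: the time-derivative term is strictly bounded from below by convexity of $\vartheta$, the EC contributions telescope globally by \cref{eq:entropy_conserv_flux}, the monotone interface flux contributes nonnegatively, and the graph viscosity produces a nonnegative symmetric quadratic form in $u_h-\tilde u_h$, forcing $u_h=\tilde u_h$.

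The main obstacle is the sign-based case analysis inside the decomposition: securing nonnegativity of the ${\cal U}$-coefficient when $Q_{ik}^p<0$ forces the symmetry-based swap of base state in \cref{th:hec_over_fan}, and matching the resulting bounds on $\beta$ to the hypothesis \cref{eq:condition_on_dkappa} on $d_\kappa$ is precisely where the $L_u$ factor enters.
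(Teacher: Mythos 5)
Your existence/MPP skeleton is essentially the paper's (rewrite the scheme as a convex combination through \cref{th:hec_over_fan} and \cref{eq:MPP_interface_flux}, then a fixed point), but two local points need repair. First, your treatment of the case $Q^p_{ik}<0$ does not produce the required form: applying \cref{eq:hec_over_fan-a} with base state $U_\kappa^{kj}$ and using symmetry of $h_{ec}$ leaves the uncontrolled leftover $\big({\bf f}(U_\kappa^{kj})-{\bf f}(U_\kappa^{ij})\big)\cdot{\bf n}$ (equivalently, a ${\cal U}$-term with a negative coefficient), which either breaks the convex-combination structure or forces extra viscosity beyond \cref{eq:condition_on_dkappa}; the clean fix is to flip the normal, using linearity of $h_{ec}$ in ${\bf n}$, i.e.\ apply \cref{eq:hec_over_fan-a} with $-{\bf n}$, which is in effect what the paper's choice $2\alpha=d_\kappa\omega_k/(4\beta_{(ik)j}D^p_{ik})$ does. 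Second, your standalone max-DOF argument for the maximum principle is circular: \cref{th:hec_over_fan} sandwiches ${\cal U}$ only when its arguments already lie in $[m,M]$ and $\alpha\geq L_f(m,M)$, which is precisely what you are trying to prove for an arbitrary solution of the nonlinear system. The paper instead constructs a solution inside $[m,M]$ (fixed point of a self-map of $[m,M]^{\mathbb{N}}$, cf.\ \cref{eq:def_Unp_mp_as_convex_comb}) and transfers the bound to \emph{the} solution through uniqueness; your Brouwer construction can play the same role, so this is repairable, but only once uniqueness is secured.

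The decisive gap is uniqueness. Testing the difference of two solutions with $v(U_\kappa^{ij})-v(\tilde U_\kappa^{ij})$ does not close: the EC relation \cref{eq:entropy_conserv_flux} telescopes only when a single solution's residual is paired with its own entropy variable, whereas for differences the cross terms $\big[h_{ec}(U_\kappa^{ij},U_\kappa^{kj},{\bf n})-h_{ec}(\tilde U_\kappa^{ij},\tilde U_\kappa^{kj},{\bf n})\big]\big[v(U_\kappa^{ij})-v(\tilde U_\kappa^{ij})\big]$ are sign-indefinite; likewise the monotone interface flux is $L^1$-contractive but not a monotone operator in this $v$-difference pairing, so its contribution has no sign. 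Worse, the scheme is implicit with unrestricted $\Delta t^{(n)}$, so the coercive time term, of size $O(h^d/\Delta t^{(n)})$, cannot absorb the $O(h^{d-1})$ indefinite spatial terms, and the graph-viscosity quadratic form does not dominate them without a quantitative derivative bound on $h_{ec}$. This is exactly where the paper's argument differs and where $L_u$ really enters: it proves $|\partial_1h_{ec}(u,u^+,{\bf n})|\leq L_fL_u|{\bf n}|$ (see \cref{eq:bound_partial_hec}), deduces that the combined EC-plus-viscosity pair functions $G_\kappa$ are nondecreasing in their first argument under \cref{eq:condition_on_dkappa}, and then runs a weighted $L^1$ (Eymard--Gallou\"et--Herbin/Crandall-type) argument with the weight $e^{-\gamma|{\bf x}|}$ and $\gamma$ small, which also handles summability over a possibly unbounded mesh. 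Your uniqueness step needs to be replaced by an argument of this monotonicity type; as written it would fail.
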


\begin{proof}
\noindent\textit{Preliminaries.} We will use some short notations: for the sums, $\sum_\kappa=\sum_{\kappa\in\Omega_h}$, $\sum_k=\sum_{k=0}^p$, etc, also ${\bf f}_\kappa^{ij}={\bf f}(U_\kappa^{ij})$, $\eta_\kappa^{ij}=\eta(U_\kappa^{ij})$, etc. Let add the following trivial quantity to the LHS of \cref{eq:fully-discr_DGSEM}:
\begin{equation*}
 -2\omega_i\omega_jJ_\kappa^{ij}{\bf f}_\kappa^{ij,n+1}\cdot\Big(\sum_{k} D^p_{ik}{\bf n}_\kappa^{(ik)j} + D^p_{jk}{\bf n}_\kappa^{i(jk)}\Big) \overset{\cref{eq:discr_Met_Id}}{=} 0
\end{equation*}

\noindent with \cref{eq:semi-discr_DGSEM-res,eq:graph_viscosity} and rewrite the result multiplied by $\tfrac{\Delta t^{(n)}}{\omega_i\omega_jJ_\kappa^{ij}}$ as
\begin{align*}
U_\kappa^{ij,n+1} +& \frac{\Delta t^{(n)}}{J_\kappa^{ij}} \bigg( \\
&\sum_{k} 2D^p_{ik}\big(h_{ec}(U_\kappa^{ij},U_\kappa^{kj},{\bf n}_\kappa^{(ik)j})-{\bf f}_\kappa^{ij}\cdot{\bf n}_\kappa^{(ik)j}\big) + \frac{\omega_kd_\kappa|{\bf n}_\kappa^{(ik)j}|}{2}(U_\kappa^{ij}-U_\kappa^{kj}) \\
 +& \sum_{k} 2D^p_{jk}\big(h_{ec}(U_\kappa^{ij},U_\kappa^{ik},{\bf n}_\kappa^{i(jk)})-{\bf f}_\kappa^{ij}\cdot{\bf n}_\kappa^{i(jk)}\big) + \frac{\omega_kd_\kappa|{\bf n}_\kappa^{i(jk)}|}{2}(U_\kappa^{ij}-U_\kappa^{ik}) \\
 +& \frac{1}{\omega_i\omega_j} \sum_{e,k} \phi_\kappa^{ij}({\bf x}_e^k)\omega_kJ_e^k\Big(h\big(U_\kappa^{ij},u_h^{+}({\bf x}_e^{k},t),{\bf n}_e^k\big)-{\bf f}_\kappa^{ij}\cdot{\bf n}_e^k\Big)
 \bigg)^{(n+1)} = U_\kappa^{ij,n}
\end{align*}

\noindent\textit{Existence.} In the above scheme, we substitute
\begin{equation*}
 h\big(U_\kappa^{ij},u_h^{+}({\bf x}_e^{k},t),{\bf n}_e^k\big)-{\bf f}_\kappa^{ij}\cdot{\bf n}_e^k = 2L_fU_\kappa^{ij} -2L_f\Big(U_\kappa^{ij}-\frac{h\big(U_\kappa^{ij},u_h^{+}({\bf x}_e^{k},t),{\bf n}_e^k\big)-{\bf f}_\kappa^{ij}\cdot{\bf n}_e^k}{2L_f}\Big),
\end{equation*}

\noindent and
\begin{align*}
 2D^p_{ik}&\big(h_{ec}(U_\kappa^{ij},U_\kappa^{kj},{\bf n}_\kappa^{(ik)j})\!-\!{\bf f}_\kappa^{ij}\cdot{\bf n}_\kappa^{(ik)j}\big) + \frac{\omega_kd_\kappa|{\bf n}_\kappa^{(ik)j}|}{2}(U_\kappa^{ij} \!-\! U_\kappa^{kj}) = \frac{d_\kappa\omega_k|{\bf n}_\kappa^{(ik)j}|}{2\beta_{(ik)j}}U_\kappa^{ij} \\
  &- \frac{d_\kappa\omega_k|{\bf n}_\kappa^{(ik)j}|}{2\beta_{(ik)j}}\Big((1-\beta_{(ik)j})U_\kappa^{ij}+\beta_{(ik)j}U_\kappa^{kj} - \frac{h_{ec}(U_\kappa^{ij},U_\kappa^{kj},{\bf n}_\kappa^{(ik)j})-{\bf f}_\kappa^{ij}\cdot{\bf n}_\kappa^{(ik)j}}{\frac{d_\kappa\omega_k|{\bf n}_\kappa^{(ik)j}|}{4\beta_{(ik)j}D^p_{ik}}}\Big) \\
	=& \frac{d_\kappa\omega_k|{\bf n}_\kappa^{(ik)j}|}{2\beta_{(ik)j}}\Big(U_\kappa^{ij} - {\cal U}\big(U_\kappa^{ij},U_\kappa^{kj},\tfrac{{\bf n}_\kappa^{(ik)j}}{|{\bf n}_\kappa^{(ik)j}|}\big)\Big),
\end{align*}

\noindent with $\beta_{(ik)j}=\beta(U_\kappa^{ij},U_\kappa^{kj})$ defined in \cref{eq:def_beta_in_Uec}, ${\cal U}(\cdot,\cdot,\cdot)$ defined in \cref{eq:hec_over_fan-a} with $2\alpha=\tfrac{d_\kappa\omega_k}{4\beta_{(ik)j}D^p_{ik}}$ which satisfies $\alpha\geq L_f$ from \cref{eq:condition_on_dkappa}, $\beta_{(ik)j}\in(0,\tfrac{1}{2}]$, and $L_u\geq1$ in \cref{eq:condition_on_dkappa}. The latter inequality is due to $\lim_{u ^+\to u^-}v'(u^+)\tfrac{u^+-u^-}{v^+-v^-}=1$. Now we define $u_h^{(n+1,\mrm=0)}=u_h^{(n)}$ and introduce the following subiterations for $\mrm\geq0$
\begin{align}
\Big(1 +\sum_{k=0}^p(\gamma_\kappa^{(ik)j}&+\gamma_\kappa^{i(jk)})+\sum_{e\in\partial\kappa}\sum_{k=0}^p\gamma_e^k\Big)^{(n+1,\mrm)}U_\kappa^{ij,(n+1,\mrm+1)} = U_\kappa^{ij,n} \nonumber\\ 
+& \sum_{k=0}^p\Big(\gamma_\kappa^{(ik)j}{\cal U}(U_\kappa^{ij},U_\kappa^{kj},\tfrac{{\bf n}_\kappa^{(ik)j}}{|{\bf n}_\kappa^{(ik)j}|})+\gamma_\kappa^{i(jk)}{\cal U}(U_\kappa^{ij},U_\kappa^{ik},\tfrac{{\bf n}_\kappa^{i(jk)}}{|{\bf n}_\kappa^{i(jk)}|})\Big)^{(n+1,\mrm)} \nonumber\\ +& \sum_{e,k}\gamma_e^k \Big(U_\kappa^{ij}-\frac{h\big(U_\kappa^{ij},u_h^{+}({\bf x}_e^{k},t),{\bf n}_e^k\big)-{\bf f}_\kappa^{ij}\cdot{\bf n}_e^k}{2L_f}\Big)^{(n+1,\mrm)} \label{eq:def_Unp_mp_as_convex_comb}
\end{align}

\noindent so from \cref{eq:hec_over_fan-a,eq:MPP_interface_flux}, $U_\kappa^{ij,(n+1,\mrm+1)}$ is a convex combination of quantities in $[m,M]$ with coefficients
\begin{equation}\label{eq:conv_coeff_in_Un+1}
 \gamma_\kappa^{(ik)j}=\frac{\Delta t^{(n)}d_\kappa\omega_k|{\bf n}_\kappa^{(ik)j}|}{2\beta_{(ik)j}^{n+1,\mrm}J_\kappa^{ij}}, \gamma_\kappa^{i(jk)}=\frac{\Delta t^{(n)}d_\kappa\omega_k|{\bf n}_\kappa^{i(jk)}|}{2\beta_{i(jk)}^{n+1,\mrm}J_\kappa^{ij}}, \gamma_e^k = \frac{2\Delta t^{(n)}L_f\phi_\kappa^{ij}({\bf x}_e^k)\omega_kJ_e^k}{\omega_i\omega_jJ_\kappa^{ij}}.
\end{equation}

Since $u_h^{(n+1,\mrm+1)}$ lies in $[m,M]$, \cref{eq:def_Unp_mp_as_convex_comb} defines a map $u_h^{(n+1,\mrm)}\mapsto u_h^{(n+1,\mrm+1)}$ of the convex compact $[m,M]^{\mathbb{N}}$, subset of the normed vector space $\mathbb{R}^{\mathbb{N}}$, into itself. This map is continuous by Lipschitz continuity of the interface flux and regularity of the EC flux. By the Schauder fixed point theorem for infinite dimensional spaces, this map has at least one fixed point. The fixed point is solution to \cref{eq:fully-discr_DGSEM}.

\noindent\textit{Uniqueness.} We here adapt some existing tools \cite[\S~26]{EYMARD2000713} to our scheme. Let us assume two solutions $u_h^{(n+1)}$ and $w_h^{(n+1)}$ of \cref{eq:fully-discr_DGSEM} and drop the exponents $(n+1)$ for the sake of clarity, thus for all $\kappa\in\Omega_h$ and $0\leq i,j\leq p$, we have
\begin{equation*}
 \omega_i\omega_jJ_\kappa^{ij}\frac{U_\kappa^{ij}- W_\kappa^{ij}}{\Delta t^{(n)}} + R_\kappa^{ij}(u_h) + V_\kappa^{ij}(u_h) - R_\kappa^{ij}(w_h) - V_\kappa^{ij}(w_h) = 0.
\end{equation*}

Using the SBP property \cref{eq:SBP}, then symmetry and consistency of $h_{ec}$, we rewrite 
\begin{align*}
2Q^p_{ik}h_{ec}&(U_\kappa^{ij},U_\kappa^{kj},{\bf n}_\kappa^{(ik)j}) = (Q^p_{ik}-Q^p_{ki})h_{ec}(U_\kappa^{ij},U_\kappa^{kj},{\bf n}_\kappa^{(ik)j}) +\delta_{ip}\delta_{kp}{\bf f}_\kappa^{pj}\cdot{\bf n}_\kappa^{(pp)j} \\ &-\delta_{i0}\delta_{k0}{\bf f}_\kappa^{0j}\cdot{\bf n}_\kappa^{(00)j} \\
=& \frac{Q^p_{ik}-Q^p_{ki}}{2}h_{ec}(U_\kappa^{ij},U_\kappa^{kj},{\bf n}_\kappa^{(ik)j})-\frac{Q^p_{ki}-Q^p_{ik}}{2}h_{ec}(U_\kappa^{kj},U_\kappa^{ij},{\bf n}_\kappa^{(ki)j}) \\
&+\delta_{ip}\delta_{kp}{\bf f}_\kappa^{pj}\cdot{\bf n}_e({\bf x}_\kappa^{pj})+\delta_{i0}\delta_{k0}{\bf f}_\kappa^{0j}\cdot{\bf n}_e({\bf x}_\kappa^{0j}),
\end{align*}

\noindent where we have used \cref{eq:link_vol_surf_metric} in the last step. We thus recast the DGSEM residuals \cref{eq:semi-discr_DGSEM-res} plus \cref{eq:graph_viscosity} as

\begin{align*}
 R_\kappa^{ij}(u_h) + V_\kappa^{ij}(u_h)  &= \frac{\omega_j}{2}\sum_{k} G_\kappa^{(ik)j}(U_\kappa^{ij},U_\kappa^{kj})-G_\kappa^{(ik)j}(U_\kappa^{kj},U_\kappa^{ij}) \\
 &+ \frac{\omega_i}{2}\sum_{k} G_\kappa^{i(jk)}(U_\kappa^{ij},U_\kappa^{ik})-G_\kappa^{i(jk)}(U_\kappa^{ik},U_\kappa^{ij}) \\
 &+ \sum_{e,k} \phi_\kappa^{ij}({\bf x}_e^k)\omega_kJ_e^k h\big(U_\kappa^{ij},u_h^{+}({\bf x}_e^{k},t),{\bf n}_e^k\big),
\end{align*}

\noindent where 
\begin{align*}
 G_\kappa^{(ik)j}(U_\kappa^{ij},U_\kappa^{kj}) &= (Q^p_{ik}-Q^p_{ki})h_{ec}\big(U_\kappa^{ij},U_\kappa^{kj},{\bf n}_\kappa^{(ik)j}\big) + d_\kappa \omega_i\omega_k|{\bf n}_\kappa^{(ik)j}| (U_\kappa^{ij}-U_\kappa^{kj}) \\
 G_\kappa^{(ik)j}(U_\kappa^{kj},U_\kappa^{ij}) &= (Q^p_{ki}-Q^p_{ik})h_{ec}\big(U_\kappa^{kj},U_\kappa^{ij},{\bf n}_\kappa^{(ik)j}\big) + d_\kappa \omega_k\omega_j|{\bf n}_\kappa^{(ik)j}| (U_\kappa^{kj}-U_\kappa^{ij}) \\
 G_\kappa^{i(jk)}(U_\kappa^{ij},U_\kappa^{ik}) &= (Q^p_{jk}-Q^p_{kj})h_{ec}\big(U_\kappa^{ij},U_\kappa^{ik},{\bf n}_\kappa^{i(jk)}\big) + d_\kappa \omega_j\omega_k|{\bf n}_\kappa^{i(jk)}| (U_\kappa^{ij}-U_\kappa^{ik}) \\
 G_\kappa^{i(jk)}(U_\kappa^{ik},U_\kappa^{ij}) &= (Q^p_{kj}-Q^p_{jk})h_{ec}\big(U_\kappa^{ik},U_\kappa^{ij},{\bf n}_\kappa^{i(jk)}\big) + d_\kappa \omega_k\omega_j|{\bf n}_\kappa^{i(jk)}| (U_\kappa^{ik}-U_\kappa^{ij})
\end{align*}

\noindent are skew-symmetric, e.g., $G_\kappa^{(ik)j}(U_\kappa^{ij},U_\kappa^{kj})=-G_\kappa^{(ik)j}(U_\kappa^{kj},U_\kappa^{ij})$, and are nondecreasing functions of their first argument under \cref{eq:condition_on_dkappa}. Indeed, let write $G(u)=(Q^p_{ik}-Q^p_{ki})h_{ec}\big(u,u^+,{\bf n}\big) + d_\kappa \omega_i\omega_k|{\bf n}|(u-u^+)$, then $G'(u)=(Q^p_{ik}-Q^p_{ki})\partial_1h_{ec}(u,u^+,{\bf n}) + d_\kappa \omega_i\omega_k|{\bf n}|$. Using \cref{eq:entropy_conserv_flux} and $\psib'(u)=v'(u){\bf f}(u)^\top$, then $\psib\circ u(v)=\int^v{\bf f}\circ u(w)dw$ we obtain
\begin{align}
 |\partial_1h_{ec}(u,u^+,{\bf n})| &= \bigg|\frac{v'(u)\Big(\psib(u^+)-\psib(u)-(v^+-v){\bf f}(u)\Big)\cdot{\bf n}}{(v^+-v)^2}\bigg| \nonumber\\ 
 &= \frac{v'(u)}{(v^+-v)^2}\Big|(v^+-v)\Big(\int_0^1{\bf f}\circ u\big(\theta v^++(1-\theta)v\big)d\theta-{\bf f}(u)\Big)\cdot{\bf n}\Big| \nonumber\\ 
 &\leq \frac{v'(u)}{|v^+-v|}L_f|u^+-u|\,|{\bf n}| \leq L_fL_u|{\bf n}|, \label{eq:bound_partial_hec}
\end{align}

\noindent from the definition of $L_u$ in \cref{eq:condition_on_dkappa}. By monotonicity, $h(\cdot,\cdot,{\bf n})$ is also nondecreasing
with its first argument and we have
\begin{align*}
 \omega_i\omega_jJ_\kappa^{ij}\frac{|U_\kappa^{ij}- W_\kappa^{ij}|}{\Delta t^{(n)}} &+ \frac{\omega_j}{2}\sum_{k} \big|G_\kappa^{(ik)j}(U_\kappa^{ij},U_\kappa^{kj})-G_\kappa^{(ik)j}(W_\kappa^{ij},U_\kappa^{kj})\big| \\
 &+ \frac{\omega_j}{2}\sum_{k} \big|G_\kappa^{(ik)j}(U_\kappa^{ij},W_\kappa^{kj})-G_\kappa^{(ik)j}(W_\kappa^{ij},W_\kappa^{kj})\big| \\
 &+ \frac{\omega_i}{2}\sum_{k} \big|G_\kappa^{i(jk)}(U_\kappa^{ij},U_\kappa^{ik})-G_\kappa^{i(jk)}(W_\kappa^{ij},U_\kappa^{ik})\big| \\
 &+ \frac{\omega_i}{2}\sum_{k} \big|G_\kappa^{i(jk)}(U_\kappa^{ij},W_\kappa^{ik})-G_\kappa^{i(jk)}(W_\kappa^{ij},W_\kappa^{ik})\big| \\
 &+ \sum_{e,k} \frac{\phi_\kappa^{ij}({\bf x}_e^k)\omega_kJ_e^k}{2} \big|h(U_\kappa^{ij},u_h^{+},{\bf n}_e^k)-h(W_\kappa^{ij},u_h^{+},{\bf n}_e^k)\big|_{{\bf x}_e^{k}} \\
 &+ \sum_{e,k} \frac{\phi_\kappa^{ij}({\bf x}_e^k)\omega_kJ_e^k}{2} \big|h(U_\kappa^{ij},w_h^{+},{\bf n}_e^k)-h(W_\kappa^{ij},w_h^{+}{\bf n}_e^k)\big|_{{\bf x}_e^{k}} \\
 &\leq \frac{\omega_j}{2}\sum_{k} \big|G_\kappa^{(ik)j}(W_\kappa^{ij},U_\kappa^{kj})-G_\kappa^{(ik)j}(W_\kappa^{ij},W_\kappa^{kj})\big| \\
 &+ \frac{\omega_j}{2}\sum_{k} \big|G_\kappa^{(ik)j}(U_\kappa^{ij},U_\kappa^{kj})-G_\kappa^{(ik)j}(U_\kappa^{ij},W_\kappa^{kj})\big| \\
 &+ \frac{\omega_i}{2}\sum_{k} \big|G_\kappa^{i(jk)}(W_\kappa^{ij},U_\kappa^{ik})-G_\kappa^{i(jk)}(W_\kappa^{ij},W_\kappa^{ik})\big| \\
 &+ \frac{\omega_i}{2}\sum_{k} \big|G_\kappa^{i(jk)}(U_\kappa^{ij},U_\kappa^{ik})-G_\kappa^{i(jk)}(U_\kappa^{ij},W_\kappa^{ik})\big| \\
 &+ \sum_{e,k} \frac{\phi_\kappa^{ij}({\bf x}_e^k)\omega_kJ_e^k}{2} \big|h(U_\kappa^{ij},u_h^{+},{\bf n}_e^k)-h(U_\kappa^{ij},w_h^{+},{\bf n}_e^k)\big|_{{\bf x}_e^{k}} \\
 &+ \sum_{e,k} \frac{\phi_\kappa^{ij}({\bf x}_e^k)\omega_kJ_e^k}{2} \big|h(W_\kappa^{ij},u_h^{+},{\bf n}_e^k)-h(W_\kappa^{ij},w_h^{+}{\bf n}_e^k)\big|_{{\bf x}_e^{k}} 
\end{align*}

\noindent since all the terms between vertical bars in the LHS are of the same sign. Multiplying the above relation by a quantity $\varphi_\kappa^{ij}\geq0$, which will be defined below, and summing over $\kappa\in\Omega_h$ and $0\leq i,j\leq p$, then switching indices $i\leftrightarrow k$ and $j\leftrightarrow k$ in the sums over cells $\kappa$ and switching the left and right states in the sums over faces $e$ we get
\begin{align}
 \sum_{\kappa,i,j}a_\kappa^{ij}|U_\kappa^{ij}-W_\kappa^{ij}|
 &\leq \sum_{\kappa}\sum_{i,j,k} \frac{\omega_j}{2}\big|G_\kappa^{(ik)j}(W_\kappa^{ij},U_\kappa^{kj})-G_\kappa^{(ik)j}(W_\kappa^{ij},W_\kappa^{kj})\big||\varphi_\kappa^{ij}-\varphi_\kappa^{kj}|\nonumber\\
 +& \sum_{\kappa}\sum_{i,j,k} \frac{\omega_j}{2}\big|G_\kappa^{(ik)j}(U_\kappa^{ij},U_\kappa^{kj})-G_\kappa^{(ik)j}(U_\kappa^{ij},W_\kappa^{kj})\big||\varphi_\kappa^{ij}-\varphi_\kappa^{kj}|\nonumber\\
 +& \sum_{\kappa}\sum_{i,j,k} \frac{\omega_i}{2}\big|G_\kappa^{i(jk)}(W_\kappa^{ij},U_\kappa^{ik})-G_\kappa^{i(jk)}(W_\kappa^{ij},W_\kappa^{ik})\big||\varphi_\kappa^{ij}-\varphi_\kappa^{ik}|\nonumber\\
 +& \sum_{\kappa}\sum_{i,j,k} \frac{\omega_i}{2}\big|G_\kappa^{i(jk)}(U_\kappa^{ij},U_\kappa^{ik})-G_\kappa^{i(jk)}(U_\kappa^{ij},W_\kappa^{ik})\big||\varphi_\kappa^{ij}-\varphi_\kappa^{ik}|\nonumber\\
 +& \sum_{\kappa}\sum_{e,k} \frac{\omega_kJ_e^k}{2} \Big(\big|h(u_h^{-},u_h^{+},{\bf n}_e^k)-h(w_h^{-},u_h^{+},{\bf n}_e^k)\big|\nonumber\\ &\qquad+\big|h(u_h^{-},w_h^{+},{\bf n}_e^k)-h(w_h^{-},w_h^{+},{\bf n}_e^k)\big|\Big)_{{\bf x}_e^{k}}|\varphi_h^+-\varphi_h^-|_{{\bf x}_e^{k}}\nonumber\\
 \leq& \sum_{\kappa}\sum_{i,j,k} \omega_j|{\bf n}_\kappa^{(ik)j}|\big(|Q^p_{ik}\!-\!Q^p_{ki}|L_fL_u\!+\!d_\kappa\omega_i\omega_k\big)|U_\kappa^{kj}\!-\!W_\kappa^{kj}||\varphi_\kappa^{ij}\!-\!\varphi_\kappa^{kj}|\nonumber\\
 +& \sum_{\kappa}\sum_{i,j,k} \omega_i|{\bf n}_\kappa^{i(jk)}|\big(|Q^p_{jk}\!-\!Q^p_{kj}|L_fL_u\!+\!d_\kappa\omega_k\omega_j\big)|U_\kappa^{ik}\!-\!W_\kappa^{ik}||\varphi_\kappa^{ij}\!-\!\varphi_\kappa^{ik}|\nonumber\\
 +& \sum_{\kappa}\sum_{e,k} \frac{\omega_kJ_e^k}{2} (L_1+L_2)|u_h^--w_h^-|_{{\bf x}_e^{k}}|\varphi_h^+-\varphi_h^-|_{{\bf x}_e^{k}}\nonumber\\
 =& \sum_{\kappa,i,j} b_\kappa^{ij} |U_\kappa^{ij}-W_\kappa^{ij}| \label{eq:ineq_with_a-b_coeffs}
\end{align}

\noindent where we have used symmetry of $h_{ec}(\cdot,\cdot,{\bf n})$ and the mean value theorem to get
\begin{align*}
 |h_{ec}(u,u^+,{\bf n})&-h_{ec}(u,u^-,{\bf n})| =|h_{ec}(u^+,u,{\bf n})-h_{ec}(u^-,u,{\bf n})| \\ =& \big|\partial_1h_{ec}\big(u,\theta u^++(1-\theta)u^-,{\bf n}\big)\big|\;|u^+-u^-| \overset{\cref{eq:bound_partial_hec}}{\leq} L_fL_u|{\bf n}|\;|u^+-u^-|,
\end{align*}

\noindent while $a_\kappa^{ij}=\tfrac{\omega_i\omega_jJ_\kappa^{ij}}{\Delta t^{(n)}}\varphi_\kappa^{ij}$, and from \cref{eq:link_vol_surf_metric} we obtain
\begin{align*}
 b_\kappa^{ij} &= \sum_k \omega_j|{\bf n}_\kappa^{(ik)j}|\big(|Q^p_{ik}-Q^p_{ki}|L_fL_u+d_\kappa\omega_i\omega_k\big)|\varphi_\kappa^{ij}-\varphi_\kappa^{kj}| \\
 &+ \sum_k \omega_i|{\bf n}_\kappa^{i(jk)}|\big(|Q^p_{jk}-Q^p_{kj}|L_fL_u+d_\kappa\omega_k\omega_j\big)|\varphi_\kappa^{ij}-\varphi_\kappa^{ik}| \\
 +& \tfrac{L_1+L_2}{2}\big(\omega_j(\delta_{i0}|{\bf n}_\kappa^{(00)j}|+\delta_{ip}|{\bf n}_\kappa^{(pp)j}|) + \omega_i(\delta_{j0}|{\bf n}_\kappa^{i(00)}|+\delta_{jp}|{\bf n}_\kappa^{i(pp)}|)\big)|\varphi_\kappa^{ij}-\varphi_h^+({\bf x}_\kappa^{ij})|.
\end{align*}


We now define $\varphi_h$ as the space average of $\varphi({\bf x})=e^{-\gamma|{\bf x}|}$ in subcells $\cup_{i,j=0}^p\ol{\kappa_{ij}}=\ol{\kappa}$: $\varphi_h({\bf x}')=\tfrac{1}{|\kappa_{ij}|}\int_{\kappa_{ij}}\varphi({\bf x})dV$ $\forall{\bf x}'\in\kappa_{ij}$. Such subcells $\kappa_{ij}$ of volume $|\kappa_{ij}|=\omega_i\omega_jJ_\kappa^{ij}$ have been shown to exist on high-order grids \cite[App.~B.3]{HENNEMANN_etal_FVGDSEM_21}. We thus have $\sum_\kappa\langle\varphi_h\rangle_\kappa\leq\int_{\mathbb{R}^d}e^{-\gamma|{\bf x}|}dV$ finite providing $\gamma>0$.

Using \cref{eq:regular_mesh_cond}  and $|Q^p_{ik}-Q^p_{ki}|L_fL_u\leq\omega_k\omega_id_\kappa$ from \cref{eq:condition_on_dkappa}, we have 
\begin{equation}\label{eq:bounds_on_a-b_coeffs}
 a_\kappa^{ij} \geq \frac{\omega_0^d\beta h^d}{2d\Delta t^{(n)}}\inf_{{\bf x}\in B_\kappa(h)}\varphi({\bf x}), \quad b_\kappa^{ij}\leq \frac{(8d_\kappa+L_1+L_2)h^d}{d\beta}\sup_{{\bf x}\in B_\kappa(2h)}|\nabla\varphi|,
\end{equation}

\noindent with $B_\kappa(h)$ the ball of radius $h$ centered on the barycenter of $\kappa$. Therefore choosing $\gamma>0$ such that $\gamma e^{3\gamma h}<\tfrac{\omega_0^d\beta^2}{2(8d_\kappa+L_1+L_2)\Delta t^{(n)}}$ imposes $a_\kappa^{ij}>b_\kappa^{ij}$, so inequality \cref{eq:ineq_with_a-b_coeffs} cannot hold unless $U_\kappa^{ij}=W_\kappa^{ij}$ for all the DOFs, which completes the proof.
\end{proof}

As a direct consequence of \cref{eq:def_Unp_mp_as_convex_comb} with the initial data $u_h^{(n+1,\mrm=0)}=u_h^{(n)}$ and uniqueness of the solution to \cref{eq:fully-discr_DGSEM}, we obtain the following discrete invariance property.

\begin{corollary}\label{th:corollary_MP_BE_GV}
 Under the assumptions of \cref{th:exist_uniq_sol_BE_GV}, the solution to \cref{eq:fully-discr_DGSEM} with the artificial viscosity in \cref{eq:graph_viscosity,eq:condition_on_dkappa} satisfies the estimate
\begin{equation*}
 \min_{\kappa'\in\Omega_h}\min_{0\leq k,l\leq p} U_{\kappa'}^{kl,n} \leq U_\kappa^{ij,n+1} \leq \max_{\kappa'\in\Omega_h}\max_{0\leq k,l\leq p} U_{\kappa'}^{kl,n} \quad\forall \kappa\in\Omega_h, 0\leq i,j\leq p.
\end{equation*}
\end{corollary}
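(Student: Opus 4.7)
The plan is to bootstrap the argument already contained in the proof of \cref{th:exist_uniq_sol_BE_GV} by restricting the Schauder fixed-point construction to the smaller closed convex set $[m^{(n)},M^{(n)}]^{\mathbb{N}}$, where I set $m^{(n)}:=\min_{\kappa',k,l}U_{\kappa'}^{kl,n}$ and $M^{(n)}:=\max_{\kappa',k,l}U_{\kappa'}^{kl,n}$. The key observation is that \cref{eq:def_Unp_mp_as_convex_comb} already exhibits $U_\kappa^{ij,(n+1,\mrm+1)}$ as a convex combination (after dividing through by $1+\sum_k(\gamma_\kappa^{(ik)j}+\gamma_\kappa^{i(jk)})+\sum_{e,k}\gamma_e^k>0$) of three kinds of quantities: the previous-time-step DOF $U_\kappa^{ij,n}$, the interior values $\mathcal{U}(U_\kappa^{ij},U_\kappa^{kj},\cdot)$ and $\mathcal{U}(U_\kappa^{ij},U_\kappa^{ik},\cdot)$ at iterate $\mrm$, and the interface contributions $U_\kappa^{ij}-(h(U_\kappa^{ij},u_h^{+},{\bf n}_e^k)-{\bf f}(U_\kappa^{ij})\cdot{\bf n}_e^k)/(2L_f)$ at iterate $\mrm$.

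I would start the induction at $u_h^{(n+1,\mrm=0)}=u_h^{(n)}$, so all DOFs trivially lie in $[m^{(n)},M^{(n)}]$. Assuming $u_h^{(n+1,\mrm)}$ takes values in $[m^{(n)},M^{(n)}]$, I would invoke \cref{th:hec_over_fan} on the tighter interval $[\min(U_\kappa^{ij},U_\kappa^{kj}),\max(U_\kappa^{ij},U_\kappa^{kj})]\subseteq[m^{(n)},M^{(n)}]$: because $L_f$ in \cref{eq:flux_lipschitz_cst} is monotone with respect to interval shrinking, the value of $\alpha$ used in the proof of \cref{th:exist_uniq_sol_BE_GV} still satisfies $\alpha\geq L_f$ on that tighter interval, so \cref{eq:hec_over_fan-a} yields $\mathcal{U}(U_\kappa^{ij},U_\kappa^{kj},\cdot)\in[m^{(n)},M^{(n)}]$, and similarly for the other interior term. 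Applying \cref{eq:MPP_interface_flux} with $u^-=u=U_\kappa^{ij}$ and using consistency $h(u,u,{\bf n})={\bf f}(u)\cdot{\bf n}$ on the interval $[m^{(n)},M^{(n)}]$, the interface contribution also lies in $[m^{(n)},M^{(n)}]$. As a convex combination of elements of $[m^{(n)},M^{(n)}]$, $U_\kappa^{ij,(n+1,\mrm+1)}$ belongs to it as well, closing the induction.

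Consequently the subiteration map leaves the compact convex set $[m^{(n)},M^{(n)}]^{\mathbb{N}}$ invariant, so repeating the Schauder argument of \cref{th:exist_uniq_sol_BE_GV} on this smaller set produces a fixed point that solves \cref{eq:fully-discr_DGSEM} and lies in $[m^{(n)},M^{(n)}]$. By the uniqueness part of \cref{th:exist_uniq_sol_BE_GV}, this fixed point coincides with $u_h^{(n+1)}$, which is the desired estimate. I do not anticipate any real obstacle here: the only subtle point is the monotonicity of $L_f$ and $L_u$ under shrinking of the interval $[m,M]$, which is immediate from their definitions as suprema.
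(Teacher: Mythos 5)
Your proposal is correct and is essentially the paper's own argument: the paper obtains the corollary as a direct consequence of the convex-combination structure of \cref{eq:def_Unp_mp_as_convex_comb} initialized at $u_h^{(n)}$, combined with uniqueness from \cref{th:exist_uniq_sol_BE_GV}. You merely make explicit the (correct) observation that $L_f$ and $L_u$ only decrease when the interval $[m,M]$ shrinks, so the same viscosity coefficients keep the iterates in $[m^{(n)},M^{(n)}]$.
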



%
\begin{theorem}\label{th:ES_sol_BE_GV}
The scheme \cref{eq:fully-discr_DGSEM} with the EC flux \cref{eq:entropy_conserv_flux}, defined for a given entropy pair $\big(\vartheta(u),{\bf g}(u)\big)$ with $\vartheta''(\cdot)>0$, and the artificial viscosity in \cref{eq:graph_viscosity,eq:condition_on_dkappa} satisfies the following inequality for every entropy pair $\big(\eta(u),{\bf q}(u)\big)$ with $\eta''(\cdot)\geq0$:

\begin{equation*}
 \langle\eta(u_h^{(n+1)})\rangle_\kappa \leq \langle\eta(u_h^{(n)})\rangle_\kappa - \frac{\Delta t^{(n)}}{|\kappa|}\sum_{e\in\partial\kappa}\sum_{k=0}^p\omega_kJ_e^k Q\big(u_h^-({\bf x}_e^{k},t^{(n+1)}),u_h^+({\bf x}_e^{k},t^{(n+1)}),{\bf n}_e^k\big),
\end{equation*}

\noindent with the numerical entropy flux $Q(\cdot,\cdot,\cdot)$ defined in \cref{eq:ES_interface_flux}.
\end{theorem}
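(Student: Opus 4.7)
The plan is to mirror the proof of \cref{th:exist_uniq_sol_BE_GV} and combine Jensen's inequality with the pointwise entropy estimates from \cref{th:hec_over_fan,th:ineq_oleta} together with \cref{eq:ES_interface_flux}. Taking the fixed-point identity \cref{eq:def_Unp_mp_as_convex_comb} at the (unique, by \cref{th:exist_uniq_sol_BE_GV}) solution $u_h^{(n+1)}$, the nodal value $U_\kappa^{ij,n+1}$ is a convex combination of $U_\kappa^{ij,n}$, of the EC states ${\cal U}(U_\kappa^{ij},U_\kappa^{kj},\cdot)$ and ${\cal U}(U_\kappa^{ij},U_\kappa^{ik},\cdot)$ defined in \cref{eq:hec_over_fan-a}, and of the interface states $U_\kappa^{ij}-\tfrac{h(U_\kappa^{ij},u_h^{+},{\bf n}_e^k)-{\bf f}(U_\kappa^{ij})\cdot{\bf n}_e^k}{2L_f}$, with positive weights from \cref{eq:conv_coeff_in_Un+1}. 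Applying $\eta$ and using its convexity then yields a pointwise bound on $(1+\Gamma_\kappa^{ij})\eta(U_\kappa^{ij,n+1})$ by the same convex combination applied to $\eta$ of each state, where $\Gamma_\kappa^{ij}$ abbreviates the sum of the $\gamma$ coefficients.

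Next I would bound each $\eta({\cal U}_\kappa^{(ik)j})$ by \cref{eq:hec_over_fan-b} with the value of $\alpha$ recovered in the proof of \cref{th:exist_uniq_sol_BE_GV}, which satisfies $\alpha\geq L_f$ under \cref{eq:condition_on_dkappa}, and replace $\ol\eta(U_\kappa^{ij},U_\kappa^{kj})$ with its affine majorant from \cref{eq:ineq_oleta} to obtain $(1-\beta_{(ik)j})\eta(U_\kappa^{ij})+\beta_{(ik)j}\eta(U_\kappa^{kj})$ minus the entropy flux contribution. Carrying the $\gamma_\kappa^{(ik)j}$ multiplier back through the algebraic identity already used in \cref{th:exist_uniq_sol_BE_GV} reproduces exactly the entropic analog $2D^p_{ik}\bigl(q_{ec}(U_\kappa^{ij},U_\kappa^{kj},{\bf n}_\kappa^{(ik)j})-{\bf q}(U_\kappa^{ij})\cdot{\bf n}_\kappa^{(ik)j}\bigr)+\tfrac{d_\kappa\omega_k|{\bf n}_\kappa^{(ik)j}|}{2}\bigl(\eta(U_\kappa^{ij})-\eta(U_\kappa^{kj})\bigr)$ of the original cell residual plus its graph viscosity. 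For the boundary terms, I would apply \cref{eq:ES_interface_flux} with $u^-=u=U_\kappa^{ij}$, bounding each $\eta$-argument by $\eta(U_\kappa^{ij})-\tfrac{Q(U_\kappa^{ij},u_h^{+},{\bf n}_e^k)-{\bf q}(U_\kappa^{ij})\cdot{\bf n}_e^k}{2L_f}$.

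Multiplying the resulting pointwise inequality by $\omega_i\omega_jJ_\kappa^{ij}/\Delta t^{(n)}$, and using convexity on the time term in the form $\eta(U_\kappa^{ij,n+1})-\eta(U_\kappa^{ij,n})\leq \eta'(U_\kappa^{ij,n+1})(U_\kappa^{ij,n+1}-U_\kappa^{ij,n})$, delivers an inequality of exactly the same structure as \cref{eq:fully-discr_DGSEM,eq:semi-discr_DGSEM-res,eq:graph_viscosity} but with $(u_h,{\bf f},h_{ec},h)$ replaced by $(\eta(u_h),{\bf q},q_{ec},Q)$. Summing over $0\leq i,j\leq p$ then proceeds as for the conservation identity \cref{eq:DGSEM_cell_averaged_2d}: the entropy graph viscosity cancels by antisymmetry in the pairs $(i,k)$ and $(j,k)$, the EC entropy volume residual collapses to boundary contributions via \cref{eq:SBP} together with \cref{eq:discr_Met_Id,eq:link_vol_surf_metric}, and the interface entropy fluxes assemble into $Q$, producing the stated cell-averaged inequality.

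The main obstacle is sign bookkeeping: the scaling $2\alpha=\tfrac{d_\kappa\omega_k}{4\beta_{(ik)j}D^p_{ik}}$ arising from \cref{eq:hec_over_fan-b} carries the sign of the derivative entry $D^p_{ik}$, while the convex-combination weight $\gamma_\kappa^{(ik)j}$ is positive. One must check that during the chain Jensen $\to$ \cref{eq:hec_over_fan-b} $\to$ \cref{eq:ineq_oleta} the factor $\tfrac{1}{2\alpha|{\bf n}|}$ reinserts exactly the signed coefficient $2D^p_{ik}$ in front of $q_{ec}-{\bf q}\cdot{\bf n}_\kappa^{(ik)j}$, so that the SBP identity correctly telescopes the entropic volume sum to interfaces. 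The bound \cref{eq:condition_on_dkappa}, already needed for the maximum principle, is precisely what guarantees both $\alpha\geq L_f$ at each pair of DOFs and nonnegativity of the entropic graph viscosity for every admissible convex $\eta$.
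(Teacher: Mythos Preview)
Your approach is essentially the paper's: apply Jensen's inequality to the fixed point of the convex combination \cref{eq:def_Unp_mp_as_convex_comb}, invoke \cref{eq:hec_over_fan-b} for the EC states, \cref{eq:ES_interface_flux} for the interface states, and \cref{eq:ineq_oleta} for $\ol\eta$, then sum over $i,j$ so that the entropic graph viscosity cancels by symmetry and the $q_{ec}$ volume terms telescope to the boundary via SBP and the metric identities. Your sign discussion is also on target.

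One small confusion: the step ``using convexity on the time term in the form $\eta(U_\kappa^{ij,n+1})-\eta(U_\kappa^{ij,n})\leq \eta'(U_\kappa^{ij,n+1})(U_\kappa^{ij,n+1}-U_\kappa^{ij,n})$'' is not part of this argument and should be dropped. That inequality belongs to the entropy-variable multiplication route of \cref{th:ES_BE_DGSEM_noGV}, which works only for the specific entropy $\vartheta$ through the EC property \cref{eq:entropy_conserv_flux}. Here the time term is handled entirely by Jensen: from $(1+\Gamma_\kappa^{ij})\eta(U_\kappa^{ij,n+1})\leq \eta(U_\kappa^{ij,n})+\cdots$ you directly obtain $\eta(U_\kappa^{ij,n+1})-\eta(U_\kappa^{ij,n})$ on the left after moving the $\eta(U_\kappa^{ij,n+1})$ contributions hidden in the bounded EC and interface states to that side, exactly as the paper does before multiplying by $\omega_i\omega_jJ_\kappa^{ij}/\Delta t^{(n)}$. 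No further appeal to convexity is needed for the time increment.
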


\begin{proof}
We again use short notations as $\eta_\kappa^{ij}=\eta(U_\kappa^{ij})$ and ${\bf q}_\kappa^{ij}={\bf q}(U_\kappa^{ij})$, and drop the time exponent $(n+1)$ for the sake of clarity. We know that $u_h^{(n+1,\mrm+1)}=u_h^{(n+1,\mrm)}=u_h^{(n+1)}$ is solution to \cref{eq:def_Unp_mp_as_convex_comb}. Applying the entropy $\eta(\cdot)$ to the convex combination in \cref{eq:def_Unp_mp_as_convex_comb} and using \cref{eq:ES_interface_flux,eq:hec_over_fan-b}, we get
\begin{align*}
 \Big(1 +&\sum_{k}(\gamma_\kappa^{(ik)j}+\gamma_\kappa^{i(jk)})+\sum_{e,k}\gamma_e^k\Big)\eta_\kappa^{ij} \leq \eta_\kappa^{ij,n} \nonumber\\ 
&+ \sum_{k}\gamma_\kappa^{(ik)j}\bigg(\frac{\ol\eta(U_\kappa^{ij},U_\kappa^{kj})+\eta_\kappa^{ij}}{2}-\frac{q_{ec}(U_\kappa^{ij},U_\kappa^{kj},{\bf n}_\kappa^{(ik)j})-{\bf q}_\kappa^{ij}\cdot{\bf n}_\kappa^{(ik)j}}{\tfrac{d_\kappa\omega_k|{\bf n}_\kappa^{(ik)j}|}{4\beta_{(ik)j}D^p_{ik}}}\bigg) \\
&+ \sum_{k}\gamma_\kappa^{i(jk)}\bigg(\frac{\ol\eta(U_\kappa^{ij},U_\kappa^{ik})+\eta_\kappa^{ij}}{2}-\frac{q_{ec}(U_\kappa^{ij},U_\kappa^{ik},{\bf n}_\kappa^{i(jk)})-{\bf q}_\kappa^{ij}\cdot{\bf n}_\kappa^{i(jk)}}{\tfrac{d_\kappa\omega_k|{\bf n}_\kappa^{i(jk)}|}{4\beta_{i(jk)}D^p_{jk}}}\bigg) \\
&+ \sum_{e,k}\gamma_e^k \Big(\eta_\kappa^{ij}-\frac{Q\big(U_\kappa^{ij},u_h^{+}({\bf x}_e^{k},t),{\bf n}_e^k\big)-{\bf q}_\kappa^{ij}\cdot{\bf n}_e^k}{2L_f}\Big),
\end{align*}

\noindent with $\ol{\eta}$  defined in \cref{eq:def_ol_eta}. Using the metric identities \cref{eq:discr_Met_Id}, the terms in ${\bf q}_\kappa^{ij}$ in the second and third lines vanish, then using the expressions of the convex coefficients \cref{eq:conv_coeff_in_Un+1} and multiplying the result by $\omega_i\omega_jJ_\kappa^{ij}/\Delta t^{(n)}$ gives
\begin{align*}
 \omega_i\omega_j&J_\kappa^{ij}\frac{\eta_\kappa^{ij}-\eta_\kappa^{ij,n}}{\Delta t^{(n)}} + 2\sum_{k}\omega_jQ^p_{ik}q_{ec}(U_\kappa^{ij},U_\kappa^{kj},{\bf n}_\kappa^{(ik)j}) + \omega_iQ^p_{jk}q_{ec}(U_\kappa^{ij},U_\kappa^{ik},{\bf n}_\kappa^{i(jk)}) \\
 &+ \sum_{e,k} \phi_\kappa^{ij}({\bf x}_e^k)\omega_kJ_e^k\Big(Q\big(U_\kappa^{ij},u_h^{+}({\bf x}_e^{k},t),{\bf n}_e^k\big)-{\bf q}_\kappa^{ij}\cdot{\bf n}_e^k\Big) \\
 \leq &d_\kappa\omega_i\omega_j\sum_{k}\frac{\omega_k}{2}\Big(\frac{|{\bf n}_\kappa^{(ik)j}|}{\beta_{(ik)j}}\big(\ol\eta(U_\kappa^{ij},U_\kappa^{kj})-\eta_\kappa^{ij}\big)+\frac{|{\bf n}_\kappa^{i(jk)}|}{\beta_{i(jk)}}\big(\ol\eta(U_\kappa^{ij},U_\kappa^{ik})-\eta_\kappa^{ij}\big)\Big) \\
 \overset{\cref{eq:ineq_oleta}}{\leq} &d_\kappa\omega_i\omega_j\sum_{k}\omega_k\Big(|{\bf n}_\kappa^{(ik)j}|(\eta_\kappa^{kj}-\eta_\kappa^{ij})+|{\bf n}_\kappa^{i(jk)}|(\eta_\kappa^{ik}-\eta_\kappa^{ij})\Big).
\end{align*}

The LHS corresponds to a time implicit DGSEM discretization of \cref{eq:PDE_entropy_ineq}. By conservation of the DGSEM, summing the above inequality over $0\leq i,j\leq p$, the $q_{ec}(\cdot,\cdot,\cdot)$ terms in the first row of the LHS cancel out with the ${\bf q}_\kappa^{ij}$ terms in the second row from symmetry of $q_{ec}(\cdot,\cdot,{\bf n})$ in \cref{eq:def_qec} and the SBP property \cref{eq:SBP}. The RHS vanishes since $\sum_{i,j,k}\omega_i\omega_j\omega_k|{\bf n}_\kappa^{(ik)j}|(\eta_\kappa^{kj}-\eta_\kappa^{ij})=0$ by switching indices $i\leftrightarrow k$ and using symmetry of ${\bf n}_\kappa^{(ik)j}={\bf n}_\kappa^{(ki)j}$ in \cref{eq:def_Jkappa_mean}. The same holds for the second sum in the RHS which concludes the proof.
\end{proof}

\begin{remark}
 Properties of \cref{th:exist_uniq_sol_BE_GV,th:ES_sol_BE_GV} also hold without EC fluxes, i.e., with $h_{ec}(u^-,u^+,{\bf n})=\tfrac{1}{2}\big({\bf f}(u^-)+{\bf f}(u^+)\big)\cdot{\bf n}$ in \cref{eq:fully-discr_DGSEM,eq:semi-discr_DGSEM-res}, but we look for graph viscosity \cref{eq:graph_viscosity} as a minimal modification of the scheme to make it MPP and stable and want to keep the scheme as close as it is used in practice. The modification \cref{eq:graph_viscosity} to the DGSEM scheme is indeed local to the cell, easy to implement and to differentiate. Likewise, $d_\kappa$ can be computed a priori. In \cref{sec:num_xp}, we propose to locally tune the graph viscosity as a function of the local regularity of the solution, which requires to use the ES DGSEM when the graph viscosity is removed by the local tuning.
\end{remark}

\begin{remark}[square entropy]\label{rk:square_entropy2}
Conditions \cref{eq:condition_on_dkappa} may be relaxed for the square entropy, $\vartheta(u)=\tfrac{u^2}{2}$ (see \cref{rk:square_entropy1}), since $L_u=1$, $\beta(u^-,u^+)=\tfrac{1}{4}$, and $|h_{ec}(u^+,u,{\bf n})-h_{ec}(u^-,u,{\bf n})|\leq L_f\int_0^1\theta d\theta|u^+-u^-|$, so $d_\kappa \geq 2L_f\max_{k\neq l}\frac{|D^p_{kl}|}{\omega_k}$ is enough.
\end{remark}

%
%
\section{Space-time DGSEM discretization}\label{sec:space_time_DGSEM_2D}

We now look for a numerical solution to \cref{eq:hyp_cons_laws} in the function space ${\cal V}_h^{p,q}$ of piecewise polynomials of degrees $p\geq1$ in space (here $d=2$) and $q\geq1$ in time:
\begin{equation*}
 u_h({\bf x},t) = \sum_{i,j=0}^p\sum_{r=0}^{q}\phi_\kappa^{ij}({\bf x})\phi_n^r(t) U_\kappa^{ij}(t_n^r), \quad \forall ({\bf x},t) \in\kappa\times(t^{(n)},t^{(n+1)}),
\end{equation*}

\noindent with $\phi_n^r(t)=\ell_r\big(2\tfrac{t-t^{(n)}}{\Delta t^{(n)}}-1\big)$, with again the Gauss-Lobatto quadrature rules and associated Lagrange polynomials as basis functions in both space and time. Hence, every DOF $U_\kappa^{ij}(t_n^r)$ corresponds to $u_h({\bf x}_\kappa^{ij},t_n^r)$ with $t_n^r=t^{(n)}+\tfrac{\Delta t^{(n)}}{2}(1+\xi_r)$, so $t_n^0=t^{(n)^+}$ and $t_n^q=t^{(n+1)^-}$. The space-time DGSEM reads: find $u_h\in{\cal V}_h^{p,q}$ such that for all $\kappa\in\Omega_h$, $0\leq i,j\leq p$ and $0\leq r\leq q$ we have
\begin{equation}\label{eq:space_time_DGSEM}
  \omega_i\omega_jJ_\kappa^{ij}T_\kappa^r\big(u_h({\bf x}_\kappa^{ij},\cdot)\big) + \frac{\omega_r \Delta t^{(n)}}{2}\Big(R_\kappa^{ij}\big(u_h(\cdot,t_n^r)\big)+V_\kappa^{ij}\big(u_h(\cdot,t_n^r)\big)\Big)=0, 
\end{equation}

\noindent together with the initial condition $U_\kappa^{ij}(t_{-1}^q)=u_0(x_\kappa^{ij})$. With some slight abuse, we remove the dependence in $p$ and $q$ of the quadrature weights, but indices $r,m$ (resp., $i,j,k,l$) will always refer to indices in the range $\du0,q\df$ (resp., $\du0, p\df$). The space discretization is defined in \cref{eq:semi-discr_DGSEM-res,eq:graph_viscosity}, while the discretization of $\partial_tu$ (mutliplied by $\Delta t^{(n)}$) reads
\begin{align}\label{eq:stDGSEM_time_discr}
 T_\kappa^r\big(u_h({\bf x}_\kappa^{ij},\cdot)\big) =& \sum_{m=0}^q2Q^q_{rm}U_{ec}\big(U_\kappa^{ij}(t_n^r),U_\kappa^{ij}(t_n^m)\big) + \delta_{r0}\big(U_\kappa^{ij}(t_n^0)-U_\kappa^{ij}(t_{n-1}^q)\big) \nonumber\\ &+ d_n\omega_r \sum_{m=0}^q \omega_m\big(U_\kappa^{ij}(t_n^r)-U_\kappa^{ij}(t_n^m)\big),
\end{align}

\noindent where we have used an upwind numerical flux in time to satisfy causality, while the last term is again a graph viscosity term and
\begin{subequations}\label{eq:EC_time_flux}
\begin{equation}\label{eq:EC_time_flux-a}
 U_{ec}(u^-,u^+) \!=\! \int_0^1u\big(\theta v(u^+)+(1-\theta)v(u^-)\big)d\theta \overset{\cref{eq:def_beta_in_Uec}}{=} 2\beta(u^-,u^+)(u^+-u^-)+u^-,
\end{equation}

\noindent with $v(u)=\vartheta'(u)$, is smooth, consistent, $U_{ec}(u,u)=u$, and EC in the sense \cite{lefloch2002fully,friedrich_etal_sp_time_DG_2019} 
\begin{equation}\label{eq:EC_time_flux-b}
 \big(v(u^+)-v(u^-)\big)U_{ec}(u^-,u^+) = \psi_t(u^+)-\psi_t(u^-), \quad \psi_t(u)=v(u)u-\vartheta(u),
\end{equation}
\end{subequations}

\noindent with $\psi_t(u)$ the entropy potential. By \cref{eq:EC_time_flux-b}, we have $\partial_1U_{ec}(u^-,u^+)=\tfrac{v'(u^-)}{(v^+-v^-)^2}\big(\vartheta^--\vartheta^+-v^+(u^--u^+)\big)\in[0,L_u]$ by convexity of $\vartheta(u)$, $v^-(u^--u^+)\geq\vartheta^--\vartheta^+\geq v^+(u^--u^+)$, and the definition of $L_u$ in \cref{eq:condition_on_dkappa}. By symmetry, $\partial_2U_{ec}(u^-,u^+)=\partial_1U_{ec}(u^+,u^-)$.

Summing \cref{eq:space_time_DGSEM} over $0\leq r\leq q$ and $0\leq i,j\leq p$, we obtain
\begin{equation*}
 \avg{u_h(t_n^q)}_\kappa = \avg{u_h(t_{n-1}^q)}_\kappa - \frac{\Delta t^{(n)}}{|\kappa|}\sum_{r=0}^q \frac{\omega_r}{2}\sum_{e\in\partial\kappa}\sum_{k=0}^p\omega_kJ_e^k h\big(u_h^-({\bf x}_e^{k},t_n^r),u_h^+({\bf x}_e^{k},t_n^r),{\bf n}_e^k\big)
\end{equation*}

\noindent which may be compared to \cref{eq:DGSEM_cell_averaged_2d}. The following lemma has been established in \cite{friedrich_etal_sp_time_DG_2019} and is given here for the sake of comparison with \cref{th:ES_sol_st_DGSEM}.

\begin{lemma}\label{th:ES_stDGSEM_noGV}
The space-time DGSEM \cref{eq:space_time_DGSEM} without artificial viscosity, $d_\kappa=d_n=0$ $\forall\kappa\in\Omega_h$ and $n\geq0$, and with the EC fluxes \cref{eq:entropy_conserv_flux,eq:EC_time_flux} defined for a given entropy pair $\big(\vartheta(u),{\bf g}(u)\big)$, with $\vartheta''(\cdot)>0$, satisfies the following discrete entropy inequality for this pair:
\begin{align*}
 \langle \vartheta\big(u_h(t_{n}^q)\big)\rangle_\kappa &\leq \langle \vartheta\big(u_h(t_{n-1}^q)\big)\rangle_\kappa  \\ &\!-\! \frac{\Delta t^{(n)}}{|\kappa|}\!\!\sum_{r=0}^q \frac{\omega_r}{2}\sum_{e\in\partial\kappa}\sum_{k=0}^p\omega_kJ_e^k G_{ec}\big(u_h^-({\bf x}_e^{k},t_n^r),u_h^+({\bf x}_e^{k},t_n^r),{\bf n}_e^k\big).
\end{align*}

\noindent with $G_{ec}(u^-,u^+,{\bf n})=\tfrac{1}{2}h(u^-,u^+,{\bf n})(\vartheta'(u^-)+\vartheta'(u^+)) - \tfrac{1}{2}\big(\psib(u^-)+\psib(u^+)\big)\cdot{\bf n}$.
\end{lemma}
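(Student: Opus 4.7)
The plan is to test the scheme \cref{eq:space_time_DGSEM} (with $d_\kappa=d_n=0$) against the entropy variable $v(U_\kappa^{ij}(t_n^r))=\vartheta'(U_\kappa^{ij}(t_n^r))$, sum over all spatial DOFs $(i,j)$ of the cell $\kappa$ and all time nodes $0\le r\le q$, and then treat the spatial residual $R_\kappa^{ij}$ and the time operator $T_\kappa^r$ independently. Since the graph viscosity terms vanish, the backbone of the argument parallels the analysis of \cref{th:ES_BE_DGSEM_noGV}, but with time now playing the role of an additional EC-discretized direction.

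For the spatial contribution, I would reproduce the standard semi-discrete SBP/EC DGSEM entropy analysis of \cite{wintermeyer_etal_17}: combining the EC property \cref{eq:entropy_conserv_flux} of $h_{ec}$, the SBP relation \cref{eq:SBP}, and the discrete metric identity \cref{eq:discr_Met_Id}, the interior volume contributions telescope and only the boundary terms survive, yielding
\begin{equation*}
\sum_{0\leq i,j\leq p} v\big(U_\kappa^{ij}(t_n^r)\big)\,R_\kappa^{ij}\big(u_h(\cdot,t_n^r)\big) = \sum_{e\in\partial\kappa}\sum_{k=0}^p \omega_k J_e^k\, G_{ec}\big(u_h^-({\bf x}_e^{k},t_n^r),u_h^+({\bf x}_e^{k},t_n^r),{\bf n}_e^k\big),
\end{equation*}
with $G_{ec}$ as in the statement. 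This step is purely algebraic and does not involve the time discretization.

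For the time contribution, I fix a DOF $(i,j)$ and write $u_r:=U_\kappa^{ij}(t_n^r)$, $v_r:=v(u_r)$. Using symmetry of $U_{ec}$, a swap $r\leftrightarrow m$ in $\sum_{r,m}2Q^q_{rm}v_rU_{ec}(u_r,u_m)$ combined with the SBP identity $Q^q_{rm}+Q^q_{mr}=\delta_{rq}\delta_{mq}-\delta_{r0}\delta_{m0}$ recasts the double sum as a skew-symmetric kernel acting on $(v_r-v_m)$ plus explicit boundary pieces at $(r,m)=(0,0),(q,q)$. The EC relation \cref{eq:EC_time_flux-b} then converts $(v_r-v_m)U_{ec}(u_r,u_m)$ into $\psi_t(u_r)-\psi_t(u_m)$, and the consistency relations \cref{eq:interp_lag_unite_deriv} together with $\psi_t(u)=v(u)u-\vartheta(u)$ make the whole expression collapse to
\begin{equation*}
\sum_{r=0}^q v_r\sum_{m=0}^q 2Q^q_{rm}\, U_{ec}(u_r,u_m) = \vartheta(u_q)-\vartheta(u_0).
\end{equation*}
The upwind jump $v_0\big(u_0-U_\kappa^{ij}(t_{n-1}^q)\big)$ is then bounded from below by $\vartheta(u_0)-\vartheta\big(U_\kappa^{ij}(t_{n-1}^q)\big)$ by convexity of $\vartheta$, so that
\begin{equation*}
\sum_{r=0}^q v_r\, T_\kappa^r\big(u_h({\bf x}_\kappa^{ij},\cdot)\big) \geq \vartheta\big(U_\kappa^{ij}(t_n^q)\big)-\vartheta\big(U_\kappa^{ij}(t_{n-1}^q)\big).
\end{equation*}

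Combining both parts, multiplying by $\omega_i\omega_j J_\kappa^{ij}$, summing over $(i,j)$, and normalizing by $|\kappa|$ yields the claimed cell-averaged entropy inequality. The main technical point is the EC telescoping in time: one has to align the SBP boundary contributions with the entropy potential $\psi_t=vu-\vartheta$ so that the bulk collapses exactly to $\vartheta(u_q)-\vartheta(u_0)$, after which the upwind jump provides precisely the dissipation needed via convexity. Once this telescoping is established, the spatial half is a direct transcription of the well-known semi-discrete EC DGSEM analysis and requires no additional idea.
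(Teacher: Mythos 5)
Your proposal is correct and follows essentially the same route as the paper: test with $\vartheta'$, defer the spatial part to the standard SBP/EC DGSEM analysis, and telescope the time operator via the $r\leftrightarrow m$ swap with SBP, the EC relation \cref{eq:EC_time_flux-b}, and \cref{eq:interp_lag_unite_deriv}, with convexity of $\vartheta$ absorbing the upwind jump. The one imprecision is that your spatial identity should read ``$\geq$'' rather than ``$=$'': the interface flux $h$ is a monotone flux, not the EC flux, so the face contribution $v^-h(u^-,u^+,{\bf n})-\psib(u^-)\cdot{\bf n}$ dominates $G_{ec}(u^-,u^+,{\bf n})$ by Tadmor's interface entropy-stability condition rather than equaling it, which is exactly the direction needed for the stated inequality.
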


\begin{proof}
We here only recall the main steps of the proof and refer to \cite[Th.~1 \& 2]{friedrich_etal_sp_time_DG_2019} for details. We set $d_\kappa=d_n=0$ and multiply \cref{eq:space_time_DGSEM} with $V_\kappa^{ij}(t_n^r):=\vartheta'\big(U_\kappa^{ij}(t_n^r)\big)$ and sum up over $0\leq i,j\leq p$ and $0\leq r\leq q$. The space discretization with the EC flux \cref{eq:entropy_conserv_flux} is known to be ES from \cref{th:ES_sol_BE_GV}. Since $T_\kappa^r$ is only evaluated at $U_\kappa^{ij}(\cdot)$ in space, we here remove the space dependency and use the notations like $U_n^r=U_\kappa^{ij}(t_n^r)$ to get
\begin{align*}
 \sum_{r=0}^q V_n^rT_\kappa^r \overset{\cref{eq:SBP}}{=}& \sum_{r,m}Q^q_{rm}V_n^rU_{ec}\big(U_n^r,U_n^m\big)-Q^q_{mr}V_n^rU_{ec}\big(U_n^r,U_n^m\big) \\ &+ V_n^qU_n^q-V_n^0U_n^0+V_n^0\big(U_n^0-U_{n-1}^q\big)  \\
 \overset{\cref{eq:EC_time_flux}}{=}& \sum_{r,m}Q^q_{rm} \big(\psi_t(U_n^r)-\psi_t(U_n^m)\big) + V_n^qU_n^q - V_n^0U_{n-1}^q \\
 \overset{\cref{eq:interp_lag_unite_deriv}}{=}& - \psi_t(U_n^q) + \psi_t(U_n^0) + V_n^qU_n^q - V_n^0U_{n-1}^q \\
 \overset{\cref{eq:EC_time_flux-b}}{=}& \vartheta_n^q - \vartheta_{n-1}^q + \vartheta_{n-1}^q - \vartheta_n^0 - V_n^0\big(U_{n-1}^q-U_n^0\big) \geq \vartheta_n^q - \vartheta_{n-1}^q
\end{align*}

\noindent by convexity of the entropy, $\vartheta^+-\vartheta^-\geq v^-(u^+-u^-)$, which completes the proof.
\end{proof}

\begin{theorem}\label{th:exist_uniq_sol_stDGSEM_GV}
The space-time DGSEM \cref{eq:space_time_DGSEM} with the EC fluxes \cref{eq:entropy_conserv_flux,eq:EC_time_flux} defined for a given entropy pair $\big(\vartheta(u),{\bf g}(u)\big)$ with $\vartheta''(\cdot)>0$, and the graph viscosities in \cref{eq:graph_viscosity,eq:stDGSEM_time_discr} such that
\begin{equation}\label{eq:condition_on_dkappa_dn}
 d_\kappa \geq 4L_fL_u\max_{k\neq l}\frac{|D^p_{kl}|}{\omega_l} \quad \forall \kappa\in\Omega_h, \quad d_n\geq2L_u\max_{r\neq m}\frac{|D^q_{rm}|}{\omega_m} \quad \forall n\geq0,
\end{equation}

\noindent with $L_f$ defined in \cref{eq:flux_lipschitz_cst} and $L_u$ defined in \cref{eq:condition_on_dkappa}, has a unique solution satisfying the maximum principle: $m\leq U_\kappa^{ij}(t_n^r)\leq M$ for all $\kappa\in\Omega_h$, $0\leq i,j\leq p$, and $0\leq r\leq q$.
\end{theorem}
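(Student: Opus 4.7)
The plan is to proceed slab by slab, inductively assuming $U_\kappa^{ij}(t_{n-1}^q)\in[m,M]$ for every spatial DOF (true at $n=0$ by the projection of the initial condition). Within a slab the spatial residual $R_\kappa^{ij}+V_\kappa^{ij}$ is handled exactly as in the proof of \cref{th:exist_uniq_sol_BE_GV}, so the only genuinely new ingredient is the temporal operator $T_\kappa^r$ in \cref{eq:stDGSEM_time_discr}.

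For existence via Schauder's fixed point theorem, I would recast \cref{eq:space_time_DGSEM} as a convex combination mapping the compact $[m,M]^{\mathbb N}$ into itself. Writing $U_n^m:=U_\kappa^{ij}(t_n^m)$, consistency $U_{ec}(u,u)=u$, the partition of unity $\sum_m D^q_{rm}=0$, and the identity $U_{ec}(u^-,u^+)-u^- = 2\beta(u^-,u^+)(u^+-u^-)$ from \cref{eq:EC_time_flux-a} yield the key time identity
\begin{equation*}
\sum_{m=0}^q 2D^q_{rm}U_{ec}(U_n^r,U_n^m) + d_n\sum_{m=0}^q \omega_m(U_n^r-U_n^m) = \sum_{m\neq r}\bigl[d_n\omega_m - 4\beta(U_n^r,U_n^m)D^q_{rm}\bigr](U_n^r-U_n^m).
\end{equation*}
Since $\beta\in(0,\tfrac12]$ by \cref{th:hec_over_fan} and $L_u\geq1$, condition \cref{eq:condition_on_dkappa_dn} makes every bracketed coefficient nonnegative. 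Dividing \cref{eq:space_time_DGSEM} by $\omega_r$ and rearranging, $U_\kappa^{ij}(t_n^r)$ appears as a convex combination of: the previous slab value $U_\kappa^{ij}(t_{n-1}^q)$ arising from the upwind jump $\delta_{r0}$; the in-slab values $U_\kappa^{ij}(t_n^m)$ weighted by the nonnegative time coefficients above; the Riemann-type states $\mathcal U$ of \cref{eq:hec_over_fan-a} from the cell EC residual, treated exactly as in the proof of \cref{th:exist_uniq_sol_BE_GV}; and the interface updates of the form \cref{eq:MPP_interface_flux}. All ingredients lie in $[m,M]$, so the induced continuous self-map admits a fixed point, giving $U_\kappa^{ij}(t_n^r)\in[m,M]$.

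For uniqueness I would compare two slab solutions $u_h,w_h$ that share the previous-slab value, and mirror the $L^1$ contraction in the proof of \cref{th:exist_uniq_sol_BE_GV}. The time SBP property \cref{eq:SBP} together with \cref{eq:EC_time_flux-b} allow the time operator, like the spatial one, to be written in skew-symmetric form with building blocks $(D^q_{rm}-D^q_{mr})U_{ec}(U_n^r,U_n^m) + d_n\omega_r\omega_m(U_n^r-U_n^m)$ that are nondecreasing in their first argument under \cref{eq:condition_on_dkappa_dn}; the relevant Lipschitz bound is $|\partial_1 U_{ec}|,|\partial_2 U_{ec}|\leq L_u$ recorded below \cref{eq:EC_time_flux}, playing the role that \cref{eq:bound_partial_hec} played in space. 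Subtracting the two equations, multiplying by a positive test weight $\varphi_h({\bf x},t_n^r)$ built from the radial exponential of \cref{th:exist_uniq_sol_BE_GV} (extended to be piecewise constant in time), taking absolute values, and swapping $r\leftrightarrow m$ in time sums and $i\leftrightarrow k$, $j\leftrightarrow k$ in space sums produces an inequality of the form \cref{eq:ineq_with_a-b_coeffs} in which the $a$ coefficients strictly dominate the $b$ coefficients for sufficiently small decay rate $\gamma>0$; the upwind jump drops out because $u_h$ and $w_h$ share $U_{n-1}^q$.

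The main obstacle will be the bookkeeping in the uniqueness step, where the three coupled contributions (skew-symmetric spatial residual, skew-symmetric time-EC residual, and upwind time jump) must be recast into a single consistent identity amenable to the $L^1$ test-function argument; the existence step is essentially a direct transposition of the one in \cref{th:exist_uniq_sol_BE_GV} once the convex-combination identity above is in place.
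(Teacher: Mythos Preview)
Your existence argument is essentially the paper's: the convex-combination identity you write for the temporal operator is exactly the paper's rewriting \cref{eq:T_kappa_r_as_cvx_comb}, and together with the spatial manipulations from the proof of \cref{th:exist_uniq_sol_BE_GV} this yields the self-map \cref{eq:def_Unp_mp_stDGSEM_as_convex_comb} on $[m,M]^{\mathbb N}$ and Schauder applies.

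The uniqueness argument, however, has a genuine gap. After the SBP rewriting of $T_\kappa^r$ the only ``diagonal'' (positive, left-hand side) contribution that survives the subtraction of two solutions is $\delta_{rq}\bigl(U_\kappa^{ij}(t_n^q)-W_\kappa^{ij}(t_n^q)\bigr)$: the upwind term $\delta_{r0}U_{n-1}^q$ indeed cancels because $u_h$ and $w_h$ share the previous-slab value, but nothing takes its place at the interior time levels $0\leq r<q$. Consequently your coefficient $a_\kappa^{ijr}$ vanishes for every $r<q$. With a time-constant test weight the temporal $b$-contributions do vanish, but the spatial $b$-terms $\tfrac{\omega_r}{2}b_\kappa^{ij}$ are still present at \emph{every} time level, so the resulting inequality
\[
\sum_{\kappa,i,j}a_\kappa^{ij}\,\bigl|U_\kappa^{ij}(t_n^q)-W_\kappa^{ij}(t_n^q)\bigr|
\;\leq\;
\sum_{\kappa,i,j}\sum_{r=0}^{q}\tfrac{\omega_r}{2}\,b_\kappa^{ij}\,\bigl|U_\kappa^{ij}(t_n^r)-W_\kappa^{ij}(t_n^r)\bigr|
\]
controls nothing at the interior nodes $r<q$ and cannot force $u_h\equiv w_h$. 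Letting $\varphi$ vary in time does not help either, because $a_\kappa^{ijr}$ remains zero for $r<q$.

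The paper repairs this by first adding a small regularization $\nu\omega_r\omega_0\bigl(U_\kappa^{ij}(t_n^r)-U_\kappa^{ij}(t_{n-1}^q)\bigr)$, $\nu>0$, to $T_\kappa^r$. This produces a strictly positive diagonal contribution $(\delta_{rq}+\nu\omega_r\omega_0)\bigl(U_n^r-W_n^r\bigr)$ at every $r$, so that with the space--time weight $\varphi({\bf x},t)=e^{-\gamma(|{\bf x}|+t-t_n^0)}$ one can choose $\gamma>0$ making $a_\kappa^{ijr}>b_\kappa^{ijr}$ uniformly. Uniqueness at $\nu=0$ is then recovered by a limiting argument (implicit function theorem for Lipschitz maps and Cauchy-continuity of $\nu\mapsto u_{h,\nu}$). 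Your proposal is missing this regularization-and-limit step, and without it the $L^1$ comparison cannot close.
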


\begin{proof}
\noindent\textit{Preliminaries.} We here extend to \cref{eq:space_time_DGSEM} techniques used in the proof of \cref{th:exist_uniq_sol_BE_GV} for the scheme \cref{eq:fully-discr_DGSEM}. We will again use notations like $U_n^r=U_\kappa^{ij}(t_n^r)$ when there is no ambiguity. Let us add the following regularization to $T_\kappa^r(u_h)$ in \cref{eq:stDGSEM_time_discr}
\begin{equation*}
 \nu\omega_r\omega_0\big(U_\kappa^{ij}(t_n^r)-U_\kappa^{ij}(t_{n-1}^q)\big),
\end{equation*}
\noindent with $\nu>0$ and we will then pass to the limit $\nu\to0$ (see last part of the proof). This term extends the stencil of the graph viscosity in $T_\kappa^r(u_h)$ and will be used only to prove uniqueness, while existence and stability will be shown to hold whatever $\nu\geq0$.

\noindent{\it Existence.} Using \cref{eq:EC_time_flux-a}, we rewrite the time discretization terms \cref{eq:stDGSEM_time_discr} in \cref{eq:space_time_DGSEM} as
\begin{align}
 T_\kappa^r =& \sum_m\omega_r\Big(\big(d_n\omega_m+2D^q_{rm}(1-2\beta_{rm})\big)U_n^r - (d_n\omega_m-4D^q_{rm}\beta_{rm})U_n^m\Big)  \nonumber\\
 & + \delta_{r0}\big(U_n^0-U_{n-1}^q\big) + \nu\omega_r\omega_0\big(U_n^r-U_{n-1}^q\big) \nonumber\\
 \overset{\cref{eq:interp_lag_unite_deriv}}{=}& \sum_m\gamma_n^{rm}\big(U_n^r-U_n^m\big)+(\delta_{r0}+\nu\omega_r\omega_0)\big(U_n^r-U_{n-1}^q\big), \label{eq:T_kappa_r_as_cvx_comb}
\end{align}

\noindent with $\gamma_n^{rm}=\omega_r(d_n\omega_m-4D^q_{rm}\beta_{rm})\geq0$ from \cref{eq:condition_on_dkappa_dn}, $\beta_{rm}=\beta(U_n^r,U_n^m)\in(0,\tfrac{1}{2}]$ defined in \cref{eq:def_beta_in_Uec}, and $L_u\geq1$ in \cref{eq:condition_on_dkappa}. By $\sum_m$ we denote $\sum_{m=0}^q$. 

We now rewrite the space discretization terms, $R_\kappa^{ij}+V_\kappa^{ij}$, in \cref{eq:space_time_DGSEM} as in \cref{eq:def_Unp_mp_as_convex_comb} and using \cref{eq:T_kappa_r_as_cvx_comb} we again introduce the following iterations for \cref{eq:space_time_DGSEM} divided by $\omega_i\omega_jJ_\kappa^{ij}$:
\begin{align}
\bigg(\delta_{r0}\!&+\!\nu\omega_r\omega_0 \!+\!\! \sum_{m=0}^q \gamma_n^{rm} \!+\! \frac{\omega_r}{2}\Big(\sum_{k=0}^p(\gamma_\kappa^{(ik)j}\!+\!\gamma_\kappa^{i(jk)})\!+\!\!\sum_{e\in\partial\kappa}\sum_{k=0}^p\gamma_e^k\Big)\bigg)^{(\mrm)}\!\!U_\kappa^{ij}(t_n^r)^{(\mrm+1)} \!=\!\!  \nonumber\\ 
 & (\delta_{r0}+\nu\omega_r\omega_0)U_\kappa^{ij}(t_{n-1}^q) + \sum_{m=0}^q \gamma_n^{rm}U_\kappa^{ij}(t_n^m)^{(\mrm)} \nonumber\\
 &+ \frac{\omega_r}{2}\sum_{k=0}^p\Big(\gamma_\kappa^{(ik)j}{\cal U}(U_\kappa^{ij},U_\kappa^{kj},\tfrac{{\bf n}_\kappa^{(ik)j}}{|{\bf n}_\kappa^{(ik)j}|})+\gamma_\kappa^{i(jk)}{\cal U}(U_\kappa^{ij},U_\kappa^{ik},\tfrac{{\bf n}_\kappa^{i(jk)}}{|{\bf n}_\kappa^{i(jk)}|})\Big)^{(\mrm)} \nonumber\\ &+ \frac{\omega_r}{2}\sum_{e,k}\gamma_e^k \Big(U_\kappa^{ij}-\frac{h\big(U_\kappa^{ij},u_h^{+}({\bf x}_e^{k},t),{\bf n}_e^k\big)-{\bf f}_\kappa^{ij}\cdot{\bf n}_e^k}{2L_f}\Big)^{(\mrm)} \label{eq:def_Unp_mp_stDGSEM_as_convex_comb}
\end{align}

\noindent with $\mrm\geq0$, the initial data $U_\kappa^{ij}(t_{n}^{0\leq r\leq q})^{(\mrm=0)}=U_\kappa^{ij}(t_{n-1}^q)$, and the coefficients defined above and in \cref{eq:conv_coeff_in_Un+1}. The iteration is a convex combination of quantities in $[m,M]$ whatever $\nu\geq0$ and one may again use the Schauder fixed point theorem to show that the map $u_h(t_n^{0\leq r\leq q})^{(\mrm)}\mapsto u_h(t_n^{0\leq r\leq q})^{(\mrm+1)}$ defined by \cref{eq:def_Unp_mp_stDGSEM_as_convex_comb} has at least one fixed point with values in $[m,M]$ which is solution to \cref{eq:space_time_DGSEM}.


\noindent{\it Uniqueness.} We again assume two solutions $u_h$ and $w_h$ of \cref{eq:space_time_DGSEM} with the same initial data at time $t_{n-1}^q$ and show that necessarily $u_h\equiv w_h$. Subtracting both schemes gives
\begin{equation*}
 \omega_i\omega_jJ_\kappa^{ij}\big(T_\kappa^r(u_h)-T_\kappa^r(w_h)\big)_{{\bf x}_\kappa^{ij}} + \frac{\omega_r \Delta t^{(n)}}{2}\big(R_\kappa^{ij}(u_h)+V_\kappa^{ij}(u_h)-R_\kappa^{ij}(w_h)-V_\kappa^{ij}(w_h)\big)_{t_n^r}=0.
\end{equation*}

Using SBP \cref{eq:SBP}, we rewrite the time discretization terms as 
\begin{align*}
 T_\kappa^r(u_h) =& \delta_{rq}U_n^q - \delta_{r0}U_{n-1}^q + \nu\omega_r\omega_0(U_n^r-U_{n-1}^q) \\ &+ \sum_m(Q^q_{rm}-Q^q_{mr})U_{ec}\big(U_n^r,U_n^m\big) + d_n\omega_r\omega_m\big(U_n^r-U_n^m\big),
\end{align*}

\noindent with again $\sum_m\equiv\sum_{m=0}^q$, and obtain
\begin{align*}
 T_\kappa^r(u_h)-T_\kappa^r(w_h) &= (\delta_{rq}+\nu\omega_r\omega_0)(U_n^r -W_n^r) \\ 
&+ \sum_mQ^q_{rm}\big(U_{ec}(U_n^r,U_n^m) - U_{ec}(W_n^r,U_n^m)\big)+\tfrac{1}{2}d_n\omega_r\omega_m(U_n^r-W_n^r) \\
&+ \sum_mQ^q_{rm}\big(U_{ec}(W_n^r,U_n^m) - U_{ec}(W_n^r,W_n^m)\big)-\tfrac{1}{2}d_n\omega_r\omega_m(U_n^m-W_n^m) \\
&- \sum_mQ^q_{mr}\big(U_{ec}(U_n^r,U_n^m) - U_{ec}(U_n^r,W_n^m)\big)+\tfrac{1}{2}d_n\omega_r\omega_m(U_n^m-W_n^m) \\
&- \sum_mQ^q_{mr}\big(U_{ec}(U_n^r,W_n^m) - U_{ec}(W_n^r,W_n^m)\big)-\tfrac{1}{2}d_n\omega_r\omega_m(U_n^r-W_n^r).
\end{align*}

Since $\partial_1U_{ec}(u^-,u^+)\in[0,L_u]$, the two first and last rows in the RHS have the same sign under \cref{eq:condition_on_dkappa_dn}, we can thus combine this result together with \cref{eq:ineq_with_a-b_coeffs} to get 
\begin{equation*}
 \sum_{\kappa,i,j,r}a_\kappa^{ijr}|U_\kappa^{ij}(t_n^r)-W_\kappa^{ij}(t_n^r)| \leq \sum_{\kappa,i,j,r} b_\kappa^{ijr}|U_\kappa^{ij}(t_n^r)-W_\kappa^{ij}(t_n^r)|
\end{equation*}

\noindent with $a_\kappa^{ijr}=(\delta_{rq}+\nu\omega_r\omega_0)a_\kappa^{ij}(t_n^r)$ and
\begin{equation*}
 b_\kappa^{ijr} = \frac{\omega_r}{2}b_\kappa^{ij}(t_n^r)+\frac{\omega_i\omega_jJ_\kappa^{ij}}{\Delta t^{(n)}}\sum_{m}\big(d_n\omega_r\omega_m+2|Q_{rm}^q|L_{u}\big)|\varphi_\kappa^{ijm}-\varphi_\kappa^{ijr}|,
\end{equation*}
\noindent where the $a_\kappa^{ij}$ and $b_\kappa^{ij}$ are defined in \cref{eq:ineq_with_a-b_coeffs}. As in \cref{eq:ineq_with_a-b_coeffs}, we define $\varphi_\kappa^{ijr}=\varphi_\kappa^{ij}(t_n^r)\geq0$ the average of $\varphi({\bf x},t)=e^{-\gamma(|{\bf x}|+t-t_n^0)}$ over the subcell $\kappa_{ij}\subset\kappa$, with $\gamma>0$, evaluated at $t_n^r$. Using \cref{eq:bounds_on_a-b_coeffs,eq:regular_mesh_cond} together with $\omega_i\omega_jJ_\kappa^{ij}\leq|\kappa|\leq h^d$ we obtain 
\begin{equation*}
 a_\kappa^{ijr} \!\geq\! \frac{\nu\omega_0^{d+2}\beta h^d}{2d\Delta t^{(n)}}\!\!\!\!\inf_{ B_\kappa(h)\times[t_n^0,t_n^q]}\!\!\!\!\varphi({\bf x},t), \; b_\kappa^{ijr}\!\leq\! h^d\Big(\frac{8d_\kappa+L_1+L_2}{d\beta}+\frac{8d_n}{\Delta t^{(n)}}\Big)\!\!\!\!\sup_{ B_\kappa(2h)\times[t_n^0,t_n^q]}\!\!\!\!\!\!|\nabla_{{\bf x},t}\varphi|,
\end{equation*}
\noindent and using $|\nabla_{{\bf x},t}\varphi|=|(\nabla\varphi^\top,\partial_t\varphi)^\top|=\sqrt{2}\gamma\varphi$, the choice $\gamma e^{\gamma(3h+2\Delta t^{(n)})}\big(\tfrac{8d_\kappa+L_1+L_2}{d\beta}+\tfrac{8d_n}{\Delta t^{(n)}}\big)<\tfrac{\nu\omega_0^{d+2}\beta}{2\sqrt{2}\Delta t^{(n)}}$ imposes $a_\kappa^{ijr}>b_\kappa^{ijr}$, so $U_\kappa^{ij}(t_n^r)=W_\kappa^{ij}(t_n^r)$ for all the DOFs. 

\textit{Uniqueness at $\nu=0$.} Existence and stability hold whatever $\nu\geq0$, but uniqueness requires $\nu>0$. We now justify that uniqueness can be extended to $\nu=0$. The scheme is a Lipschitz continuous function of $u_h$ and $\nu$. Applying the implicit function theorem for non-differentiable functions \cite{Jittorntrum_IFT_78}, for every $\nu>0$ there exists a unique $u_h = u_{h,\nu}$ and neighborhoods of $u_{h,\nu}$ and $\nu$ where $\nu\mapsto u_{h,\nu}$ is one-to-one and continuous. It is even a Cauchy-continuous function over $(0,\infty)$, since for all Cauchy sequence $(\nu_i)_i$ in $(0,\infty)$, $(u_{h,\nu_i})_i$ is a Cauchy sequence in $\ell_\infty(\mathbb{N})$. The latter being  a complete metric space, $u_{h,\nu}$ can be extended to $\nu=0$ by Cauchy-continuity. Since $u_h$ exists and is bounded at $\nu=0$, we obtain uniqueness in this limit. This concludes the proof.
\end{proof}

Once again a consequence is the following invariance property obtained from \cref{eq:def_Unp_mp_stDGSEM_as_convex_comb} with initial data  $U_\kappa^{ij}(t_{n}^{0\leq r\leq q})^{(\mrm=0)}=U_\kappa^{ij}(t_{n-1}^q)$ and uniqueness of the solution to \cref{eq:space_time_DGSEM}.

\begin{corollary}\label{th:corollary_MP_st_DGSEM}
 Under the assumptions of \cref{th:exist_uniq_sol_stDGSEM_GV}, the solution to space-time DGSEM \cref{eq:space_time_DGSEM} with the graph viscosities in \cref{eq:graph_viscosity,eq:stDGSEM_time_discr} satisfies the estimate
\begin{equation*}
 \min_{\kappa'\in\Omega_h}\min_{0\leq k,l\leq p} U_{\kappa'}^{kl,n} \leq U_\kappa^{ij}(t_n^r) \leq \max_{\kappa'\in\Omega_h}\max_{0\leq k,l\leq p} U_{\kappa'}^{kl,n} \quad\forall \kappa\in\Omega_h, 0\leq i,j\leq p, 0\leq r\leq q.
\end{equation*}
\end{corollary}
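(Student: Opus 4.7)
The plan is to inherit the maximum principle from the fixed-point construction \cref{eq:def_Unp_mp_stDGSEM_as_convex_comb} used to prove existence in \cref{th:exist_uniq_sol_stDGSEM_GV}, and then invoke uniqueness to transfer the bound to the actual solution. Set $m_n := \min_{\kappa',k,l} U_{\kappa'}^{kl,n}$ and $M_n := \max_{\kappa',k,l} U_{\kappa'}^{kl,n}$, understanding $U_{\kappa'}^{kl,n} = U_{\kappa'}^{kl}(t_{n-1}^q)$ as the ``initial'' value of the current time slab.

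The first step is to prove by induction on the subiteration index $\mrm \geq 0$ that \emph{every} DOF $U_\kappa^{ij}(t_n^r)^{(\mrm)}$ lies in $[m_n, M_n]$. The base case $\mrm=0$ is trivial since $U_\kappa^{ij}(t_n^r)^{(0)} = U_\kappa^{ij}(t_{n-1}^q) \in [m_n, M_n]$ by definition. For the inductive step, the right-hand side of \cref{eq:def_Unp_mp_stDGSEM_as_convex_comb} is, after dividing by the bracketed coefficient of $U_\kappa^{ij}(t_n^r)^{(\mrm+1)}$, a convex combination of four families of quantities, all of them in $[m_n, M_n]$: (i) the initial value $U_\kappa^{ij}(t_{n-1}^q)$; (ii) the other time-node values $U_\kappa^{ij}(t_n^m)^{(\mrm)}$, by the inductive hypothesis; (iii) the averages $\mathcal{U}(\cdot,\cdot,\cdot)$, which lie in $[m_n, M_n]$ by \cref{th:hec_over_fan} since condition \cref{eq:condition_on_dkappa_dn} forces the relevant parameter $\alpha = d_\kappa \omega_k/(8\beta_{(ik)j} D^p_{ik})$ to satisfy $\alpha \geq L_f$, and both input states belong to $[m_n, M_n]$ by induction; and (iv) the interface corrections $U_\kappa^{ij} - (h(U_\kappa^{ij},u_h^+,\mathbf{n}_e^k)-\mathbf{f}(U_\kappa^{ij})\cdot\mathbf{n}_e^k)/(2L_f)$, which belong to $[m_n, M_n]$ by \cref{eq:MPP_interface_flux}, using that the trace $u_h^+$ evaluated at the previous iterate is itself bounded by the global inductive hypothesis applied to the neighboring cell.

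The nonnegativity of all the weights $\gamma_n^{rm}$, $\gamma_\kappa^{(ik)j}$, $\gamma_\kappa^{i(jk)}$, $\gamma_e^k$ and $\delta_{r0}+\nu\omega_r\omega_0$ was already established in the proof of \cref{th:exist_uniq_sol_stDGSEM_GV} under \cref{eq:condition_on_dkappa_dn}, so the convex-combination argument closes the induction cleanly. By \cref{th:exist_uniq_sol_stDGSEM_GV}, the iteration has a fixed point which coincides with the unique solution $u_h$ of \cref{eq:space_time_DGSEM}; since the closed set $[m_n, M_n]^{\mathrm{DOFs}}$ is invariant under the iteration and contains the starting point, its (limit) fixed point lies in it, yielding the claimed bound.

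I do not expect any serious obstacle here: all the structural work (identifying the iteration as a convex combination with nonnegative coefficients; verifying that $\alpha\geq L_f$ so that \cref{th:hec_over_fan} applies; existence and uniqueness of the limit) has already been carried out in \cref{th:exist_uniq_sol_stDGSEM_GV}. The only mildly delicate point is that the induction must be carried out \emph{simultaneously} over all cells, all space nodes, and all time nodes $t_n^r$, because the interface trace $u_h^+$ couples different cells; but since the iteration updates every DOF in parallel from data at index $\mrm$, this global inductive statement is exactly what the argument needs.
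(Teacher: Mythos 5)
Your proposal is correct and follows essentially the same route as the paper, which obtains the corollary directly from the convex-combination structure of the subiterations \cref{eq:def_Unp_mp_stDGSEM_as_convex_comb} initialized at $U_\kappa^{ij}(t_{n-1}^q)$, together with uniqueness of the solution from \cref{th:exist_uniq_sol_stDGSEM_GV}; your write-up merely makes explicit the global induction over subiterations and the verification that each contribution stays in $[m_n,M_n]$. The only cosmetic point is that the Schauder argument yields \emph{existence} of a fixed point in the invariant box rather than convergence of the iterates, so "its (limit) fixed point" should be read as "the fixed point guaranteed in that box", which uniqueness then identifies with the scheme's solution — exactly the paper's logic.
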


\begin{theorem}\label{th:ES_sol_st_DGSEM}
Under the assumptions of \cref{th:exist_uniq_sol_stDGSEM_GV}, the space-time DGSEM \cref{eq:space_time_DGSEM} with the EC fluxes \cref{eq:entropy_conserv_flux,eq:EC_time_flux}, defined for a given entropy pair $\big(\vartheta(u),{\bf g}(u)\big)$ with $\vartheta''(\cdot)>0$, and the graph viscosities in \cref{eq:graph_viscosity,eq:stDGSEM_time_discr} satisfies the following inequality for every entropy pair $\big(\eta(u),{\bf q}(u)\big)$ with $\eta''(\cdot)\geq0$:
\begin{align*}
 \langle\eta\big(u_h(t_{n}^q)\big)\rangle_\kappa \leq& \langle\eta\big(u_h(t_{n-1}^q)\big)\rangle_\kappa \\&- \frac{\Delta t^{(n)}}{|\kappa|}\sum_{r=0}^q\frac{\omega_r}{2}\sum_{e\in\partial\kappa}\sum_{k=0}^p\omega_kJ_e^k Q\big(u_h^-({\bf x}_e^{k},t_n^r),u_h^+({\bf x}_e^{k},t_n^r),{\bf n}_e^k\big),
\end{align*}
\noindent with the numerical entropy flux $Q(\cdot,\cdot,\cdot)$ defined in \cref{eq:ES_interface_flux}.
\end{theorem}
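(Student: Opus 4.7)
I plan to mirror the proof of \cref{th:ES_sol_BE_GV}, handling the time direction by an analogous convex-combination/Jensen argument. By \cref{th:exist_uniq_sol_stDGSEM_GV} (taking $\nu\to 0^+$ via the Cauchy-continuity argument used there), the solution $U_\kappa^{ij}(t_n^r)$ is a fixed point of \cref{eq:def_Unp_mp_stDGSEM_as_convex_comb} and is therefore a convex combination of: $U_\kappa^{ij}(t_{n-1}^q)$ with weight $\delta_{r0}$; $U_\kappa^{ij}(t_n^m)$ with weights $\gamma_n^{rm}$; the spatial EC quantities ${\cal U}(\cdot,\cdot,\cdot)$ with weights $\tfrac{\omega_r}{2}\gamma_\kappa^{(ik)j}$ and $\tfrac{\omega_r}{2}\gamma_\kappa^{i(jk)}$; and the interface quantities with weights $\tfrac{\omega_r}{2}\gamma_e^k$. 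I apply $\eta$ with Jensen's inequality and then bound $\eta({\cal U}(\cdot))$ and the interface entropy using \cref{eq:hec_over_fan-b,eq:ES_interface_flux}. Rearranging yields a pointwise inequality that I scale by $\tfrac{2\omega_i\omega_j J_\kappa^{ij}}{\Delta t^{(n)}}$ and sum over $0\leq i,j\leq p$ and $0\leq r\leq q$.

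The spatial contribution is then handled exactly as in the proof of \cref{th:ES_sol_BE_GV}, applied at each time node $t_n^r$: the $\ol{\eta}$-based dissipation is dominated via \cref{th:ineq_oleta} by space-adjacent differences of $\eta$ which cancel after the $i,j$ summation thanks to the symmetry ${\bf n}_\kappa^{(ik)j}={\bf n}_\kappa^{(ki)j}$ from \cref{eq:def_Jkappa_mean}; and the remaining DG-type entropy residual terms collapse via SBP \cref{eq:SBP}, the metric identities \cref{eq:discr_Met_Id,eq:link_vol_surf_metric}, and the consistency of $q_{ec}$ in \cref{eq:def_qec}, producing at each $r$ the boundary entropy flux $\omega_r\sum_{e,k}\omega_k J_e^k Q(u_h^-,u_h^+,{\bf n}_e^k)$.

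The genuinely new step is the time sum. With $\gamma_n^{rm}=\omega_r(d_n\omega_m-4D_{rm}^q\beta_{rm})$ and $\beta_{rm}=\beta(U_\kappa^{ij}(t_n^r),U_\kappa^{ij}(t_n^m))$, the $d_n\omega_r\omega_m$ piece of $\sum_{r,m}\gamma_n^{rm}(\eta(U_\kappa^{ij}(t_n^r))-\eta(U_\kappa^{ij}(t_n^m)))$ vanishes by antisymmetry under $r\leftrightarrow m$. For the $-4Q_{rm}^q\beta_{rm}(\cdots)$ piece I would use the identity $\beta(u^-,u^+)+\beta(u^+,u^-)=\tfrac12$ (which follows from the symmetry of $U_{ec}$), together with $\beta(u,u)=\tfrac14$, the time SBP relation \cref{eq:SBP}, and $\sum_m Q_{rm}^q=0$ from \cref{eq:interp_lag_unite_deriv}, to evaluate this double sum as $\eta(U_\kappa^{ij}(t_n^q))-\eta(U_\kappa^{ij}(t_n^0))$. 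Adding the $\delta_{r0}$ contribution $\eta(U_\kappa^{ij}(t_n^0))-\eta(U_\kappa^{ij}(t_{n-1}^q))$ telescopes to the desired $\eta(U_\kappa^{ij}(t_n^q))-\eta(U_\kappa^{ij}(t_{n-1}^q))$, after which summing over $i,j$, dividing by $|\kappa|$, and using \cref{eq:cell-average} delivers the claimed inequality.

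The main obstacle is precisely this time-side collapse: because $\beta_{rm}\neq\beta_{mr}$ in general (only $\beta_{rm}+\beta_{mr}=\tfrac12$), no direct antisymmetry reduces the double sum as in the spatial case, where the skew part $Q_{ik}^p-Q_{ki}^p$ of SBP immediately peels off the boundary values of $h_{ec}$. The careful combination of the $\beta$-symmetry with $\beta(u,u)=\tfrac14$ and the time SBP is what produces exactly the endpoint difference $\eta(U_\kappa^{ij}(t_n^q))-\eta(U_\kappa^{ij}(t_n^0))$ needed to upgrade the upwind jump at $r=0$ into the claimed entropy decay between consecutive time slabs.
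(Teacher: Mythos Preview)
Your proposal is correct and follows essentially the same route as the paper: apply $\eta$ with Jensen to the fixed-point convex combination \cref{eq:def_Unp_mp_stDGSEM_as_convex_comb} at $\nu=0$, treat the spatial part exactly as in \cref{th:ES_sol_BE_GV}, and collapse the time contribution via $\beta_{rm}+\beta_{mr}=\tfrac12$, $\beta_{rr}=\tfrac14$, $\sum_m Q^q_{rm}=0$, and the time SBP \cref{eq:SBP} to obtain the endpoint difference $\eta_n^q-\eta_n^0$ that telescopes with the $\delta_{r0}$ term. The paper carries out precisely this computation (its displayed chain of equalities is exactly your ``time-side collapse''), so there is no substantive difference between the two arguments.
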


\begin{proof}
 The proof follows the lines of the proof of \cref{th:ES_sol_BE_GV} by applying the convex entropy $\eta(\cdot)$ to the limit $\mrm\to\infty$ of the convex combination \cref{eq:def_Unp_mp_stDGSEM_as_convex_comb} with $\nu=0$ and further using \cref{eq:ES_interface_flux,eq:hec_over_fan-b}, so we omit the details. Compared to \cref{th:ES_sol_BE_GV}, the only additional terms to the RHS come from the time contributions. Summing them over $0\leq r\leq q$ and using the definition of $\gamma_n^{rm}=\omega_r(d_n\omega_m-4D^q_{rm}\beta_{rm})$ and using the notation $\sum_{r,m}\equiv\sum_{r,m=0}^q$, they read
\begin{align*}
 -\eta_n^0&+\eta_{n-1}^q-\sum_{r,m}\omega_r(d_n\omega_m-4D^q_{rm}\beta_{rm})(\eta_n^r-\eta_n^m) = -\eta_n^0+\eta_{n-1}^q  \\ +& 4\sum_{r,m}(Q^q_{rm}\beta_{rm}-Q^q_{mr}\beta_{mr})\eta_n^r \overset{\cref{eq:interp_lag_unite_deriv}}{\underset{\cref{eq:def_beta_in_Uec}}{=}} -\eta_n^0+\eta_{n-1}^q  - 4\sum_{r,m}(Q^q_{rm}+Q^q_{mr})\beta_{mr}\eta_n^r \\ &\overset{\cref{eq:SBP}}{=}  -\eta_n^q+\eta_{n-1}^q,
\end{align*}

\noindent where we have switched indices $r\leftrightarrow m$ and then used $\beta_{rm}=\tfrac{1}{2}-\beta_{mr}$ and $\beta_{00}=\beta_{qq}=\tfrac{1}{4}$ from the definition \cref{eq:def_beta_in_Uec} of $\beta_{rm}=\beta(U_n^r,U_n^m)$. This completes the proof.
\end{proof}

\begin{remark}[Kru\v{z}kov's entropies] 
The DGSEM in \cref{th:ES_BE_DGSEM_noGV,th:ES_stDGSEM_noGV} without graph viscosity is not ES for the Kru\v{z}kov's entropies \cite{Krukov1970FIRSTOQ}, $\vartheta(u)=|u-K|$, ${\bf g}(u)=\text{sgn}(u-K)\big({\bf f}(u)-{\bf f}(K)\big)$, $K\in\mathbb{R}$, since it requires the mapping $v(u)=\vartheta'(u)$ to be one-to-one, but $v'(u)=\delta(u-K)$. Applying \cref{eq:entropy_conserv_flux,eq:EC_time_flux-b}, we obtain $h_{ec}(u^-,u^+,{\bf n})={\bf f}(K)\cdot{\bf n}$ and $U_{ec}(u^-,u^+)={K}$ which are not consistent. Using the graph viscosity and EC fluxes defined for one strictly convex function, e.g., $\vartheta(u)=\tfrac{u^2}{2}$, however applies to them as it only requires $\eta''(\cdot)\geq0$ and, in particular, \cref{th:ES_sol_BE_GV,th:ES_sol_st_DGSEM} hold for $\eta(u)=|u-K|$.
\end{remark}

\begin{remark}[bounds in limiters] 
Adding graph viscosity to the schemes \cref{eq:fully-discr_DGSEM,eq:space_time_DGSEM} annihilates they high-order accuracy. These low-order schemes may however constitute building-blocks of limiters such as the flux-corrected transport limiter \cite{BORIS_Book_FCT_73,zalesak1979fully,MRR_BEDGSEM_23}, or the convex limiter \cite{Guermond_etal_IDP_conv_lim_18,PAZNER_idg_DGSEM20211}. The properties of the present schemes indeed allow to apply many ways to limit the solution to impose different bounds to the high-order solution such as the maximum principle in \cref{th:exist_uniq_sol_BE_GV,th:exist_uniq_sol_stDGSEM_GV}, the invariance properties in \cref{th:corollary_MP_BE_GV,th:corollary_MP_st_DGSEM}, or entropy inequalities based on any convex entropy in \cref{th:ES_sol_BE_GV,th:ES_sol_st_DGSEM}. This is left for future work and we here consider another strategy where we locally adapt the graph viscosity from the local regularity of the solution (see \cref{sec:num_xp}).
\end{remark}

\begin{remark}[sparse low-order discretization] 
The accuracy of the schemes \cref{eq:fully-discr_DGSEM,eq:space_time_DGSEM} with graph viscosities may be improved by substituting the sparse discrete derivative matrix $\hat{\bf Q}^p$ for ${\bf Q}^p$ (and $\hat{\bf Q}^q$ for ${\bf Q}^q$ for the space-time DGSEM) with entries $\hat{Q}^p_{k,\min(k+1,p)}=-\hat{Q}^p_{k,\max(k-1,0)}=\tfrac{1}{2}$ for $0\leq k\leq p$. These operators satisfy properties \cref{eq:SBP,eq:interp_lag_unite_deriv} and \cref{eq:discr_Met_Id} providing the metric terms are modified when considering curved elements \cite[\S~3.1]{PAZNER_idg_DGSEM20211}. As a consequence, 
\cref{th:exist_uniq_sol_BE_GV,th:ES_sol_BE_GV,th:exist_uniq_sol_stDGSEM_GV,th:ES_sol_st_DGSEM} hold. 
\end{remark}

%
%
\section{Numerical experiments}\label{sec:num_xp}

In this section we present numerical experiments on problems involving discontinuous solutions in one space dimension. The objective is here to illustrate the properties of the DGSEM schemes \cref{eq:fully-discr_DGSEM,eq:space_time_DGSEM}, but also their dissipative character. We consider problems with either a convex, or a non convex flux:
\begin{equation}
 \partial_tu+\partial_xf(u)=0 \text{ in } [0,1]\times(0,\infty), \quad u(\cdot,0)=u_0(\cdot) \text{ in } [0,1],
\end{equation}

\noindent with appropriate boundary conditions at $x=0$ and $x=1$. The different problems are defined in \cref{tab:def_pbs}. We define the EC space and time fluxes \cref{eq:entropy_conserv_flux,eq:EC_time_flux} with the square entropy $\vartheta(u)=\tfrac{u^2}{2}$, so $L_u=1$ in \cref{eq:condition_on_dkappa,eq:condition_on_dkappa_dn} (see \cref{rk:square_entropy1,rk:square_entropy2}), and use the Godunov solver \cite{godunov_59} at interfaces.
We use a formally fourth-order scheme, $p=q=3$ in \cref{eq:space_time_DGSEM}, except problem 2 that is steady-state and is solved with the backward Euler DGSEM \cref{eq:fully-discr_DGSEM} and $p=3$. The time step is computed from $\tfrac{\Delta t^{(n)}}{\text{diam}\,\kappa}L_f \leq \CFL$, where $\CFL=1$ for an unsteady problem, and $\CFL=10^3$ for a steady-state problem. The nonlinear algebraic systems \cref{eq:fully-discr_DGSEM,eq:space_time_DGSEM} are solved by using an iterative algorithm based on the exact linearization of the residuals at each subiteration. We will evaluate the effect of graph viscosity by comparing three different computations with schemes \cref{eq:space_time_DGSEM,eq:fully-discr_DGSEM}: 
\begin{description}
 \item[no graph viscosity:] the graph viscosity is removed by setting $d_\kappa=d_n=0$;
 \item[graph viscosity:] the graph viscosity coefficients are evaluated from their theoretical values \cref{eq:condition_on_dkappa,eq:condition_on_dkappa_dn}; 
 \item[adapted graph viscosity:] we evaluate the Persson and Peraire troubled cell indicator \cite{persson_peraire_lim_06} at each time step to tune the graph viscosity coefficients. The indicator is evaluated from $u_h^{(n)}$ expressed in the basis of Legendre polynomials by first computing ${\cal S}_\kappa=\log\big(\|u_h-\Pi_h^{p-1}u_h\|_{L^2(\kappa)}^2/\|u_h\|_{L^2(\kappa)}^2\big)$ with $\Pi_h^{p-1}$ the projector onto ${\cal V}_h^{p-1}$. Setting ${\cal S}_0=-4\log(2p)$, $d_\kappa$ and $d_n$ are multiplied by: 0 if ${\cal S}_\kappa<{\cal S}_0-\tfrac{1}{10}$; 1 if ${\cal S}_\kappa>{\cal S}_0+\tfrac{1}{10}$; $\tfrac{1}{2}+\tfrac{1}{2}\sin\big(5\pi({\cal S}_\kappa-{\cal S}_0)\big)$ else.
\end{description}

\begin{table}
     \centering
     \caption{\label{tab:RP_IC} Definitions of the different problems and numerical parameters.}
     \begin{tabular}{lllccccccc}
        \noalign{\smallskip}\hline\noalign{\smallskip}
        \# & $f(u)$ & $u_0(x)$ & $u(0,t)$ & $u(1,t)$ & $t$ & $p$ & $q$ & $\text{diam}\,\kappa$ & $\CFL$ \\
        \noalign{\smallskip}\hline\noalign{\smallskip}
        1 & $\frac{u^2}{2}$ & $\sin(2\pi x)$ & $u(1,t)$ & $-$ & $0.4$ & $3$ & $3$ & $\frac{1}{40}$ & $1$ \\
        2 & $\frac{u^2}{2}$ & $1-2x$ & $1$ & $-1$ & $-$ & $3$ & $0$ & $\frac{1}{40}$ & $10^3$ \\
        3 & $\frac{u^2}{2}$ & $1+\sin(2\pi x)$ & $u(1,t)$ & $-$ & $\frac{3}{4\pi}$ & $3$ & $3$ & $\frac{1}{40}$ & $1$ \\
        4 & $\frac{u^2}{u^2+\frac{(1-u)^2}{2}}$ & $1_{x<0.5}$ & $1$ & $0$ & $0.2$ & $3$ & $3$ & $\frac{1}{40}$ & $1$ \\
        5 & $\frac{u^2}{u^2+\frac{(1-u)^2}{4}}$ & $3(1_{x>0.5}-1_{x<0.5})$ & $-3$ & $3$ & $1$ & $3$ & $3$ & $\frac{1}{40}$ & $1$ \\
        \noalign{\smallskip}\hline\noalign{\smallskip}
    \end{tabular}
    \label{tab:def_pbs}
\end{table} 

Results are displayed in \cref{fig:burgers} for the inviscid Burgers' equation and in \cref{fig:bukley_leverett} for the Buckley-Leverett equation. The DGSEM schemes without graph viscosities are ES for the square entropy $\vartheta(u)=\tfrac{u^2}{2}$ (see \cref{th:ES_BE_DGSEM_noGV,th:ES_stDGSEM_noGV}), so we expect them to capture the entropy weak solution when the flux is convex \cite{Panov_1994} as highlighted in \cref{fig:burgers}. This is however no longer the case for the Buckley-Leverett equation in \cref{fig:bukley_leverett}, where we observe nonphysical weak solutions. Adding graph viscosity allows to capture the entropy weak solution, but results in large dissipation smearing the discontinuities, in particular for problems 3 and 4. This dissipative character may be illustrated from a comparison with the scaling limiter \cite{zhang2012maximum}. Consider, e.g., scheme  \cref{eq:fully-discr_DGSEM} that we rewrite in one space dimension as
\begin{equation*}
 \Big(1+\frac{\Delta t^{(n)}}{J_\kappa^i}d_\kappa\Big)U_\kappa^{i,n+1}=\frac{\Delta t^{(n)}}{J_\kappa^i}d_\kappa\langle u_h^{(n+1)}\rangle_\kappa + \tilde{U}_\kappa^{i,n+1} \quad \forall 0\leq i\leq p, 
\end{equation*}

\noindent with $J_\kappa^i=\tfrac{1}{2}\text{diam}\,\kappa$, $\langle u_h^{(n+1)}\rangle_\kappa=\tfrac{1}{2}\sum_{l=0}^p\omega_lU_\kappa^{l,n+1}$ the cell-averaged solution, and $\tilde{U}_\kappa^{i,n+1}=U_\kappa^{i,n}-\tfrac{\Delta t^{(n)}}{J_\kappa^i\omega_i}R_\kappa^i(u_h^{(n+1)})$ which may be viewed as an unlimited solution. We therefore have $U_\kappa^{i,n+1}=(1-\theta_\kappa)\langle u_h^{(n+1)}\rangle_\kappa+\theta_\kappa\tilde{U}_\kappa^{i,n+1}$ with $\theta_\kappa=\big(1+\tfrac{\Delta t^{(n)}}{J_\kappa^i}d_\kappa\big)^{-1}\in(0,1)$ which corresponds to the linear scaling limiter of $u_h^{(n+1)}$ around its cell average \cite{zhang2012maximum}. The limit for large viscosity, $\lim_{d_\kappa\to\infty}\theta_\kappa=0$, indeed corresponds to imposing $U_\kappa^{i,n+1}=\langle u_h^{(n+1)}\rangle_\kappa$ as may be observed in \cref{fig:burgers,fig:bukley_leverett}.

Finally, adapting locally the viscosity with the troubled cell indicator results in less dissipation and sharp resolution of the solution features, while successfully limiting the solution. We stress that in this case, the MPP property is no longer guaranteed as this requires graph viscosity estimates to hold in all mesh elements, nor are valid the existence and uniqueness of the solution. Local viscosity adaptation in implicit scheme is here empirical, as is often the case in high-order schemes.

\begin{figure}
\centering
\subfloat{\includegraphics[width=4.3cm]{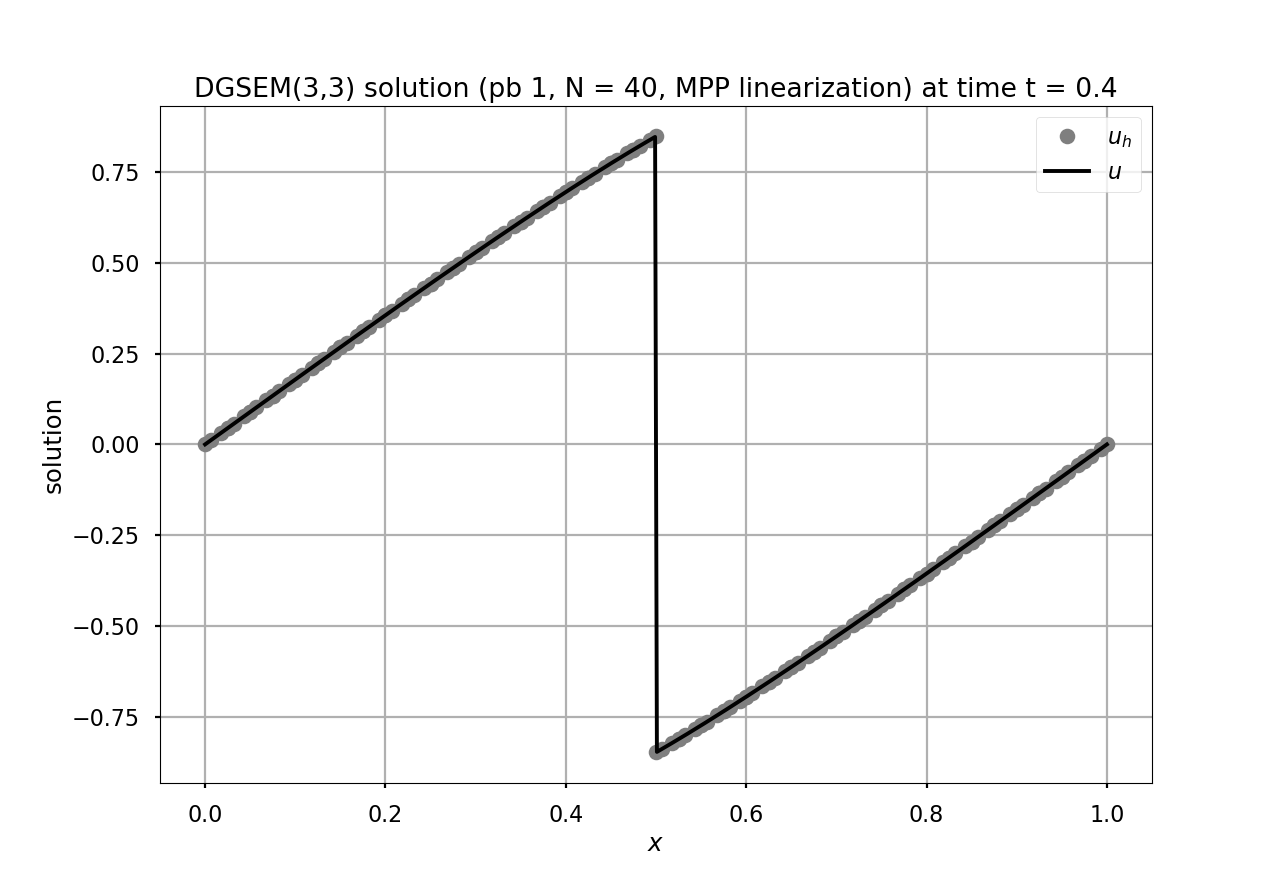}}%
\subfloat{\includegraphics[width=4.3cm]{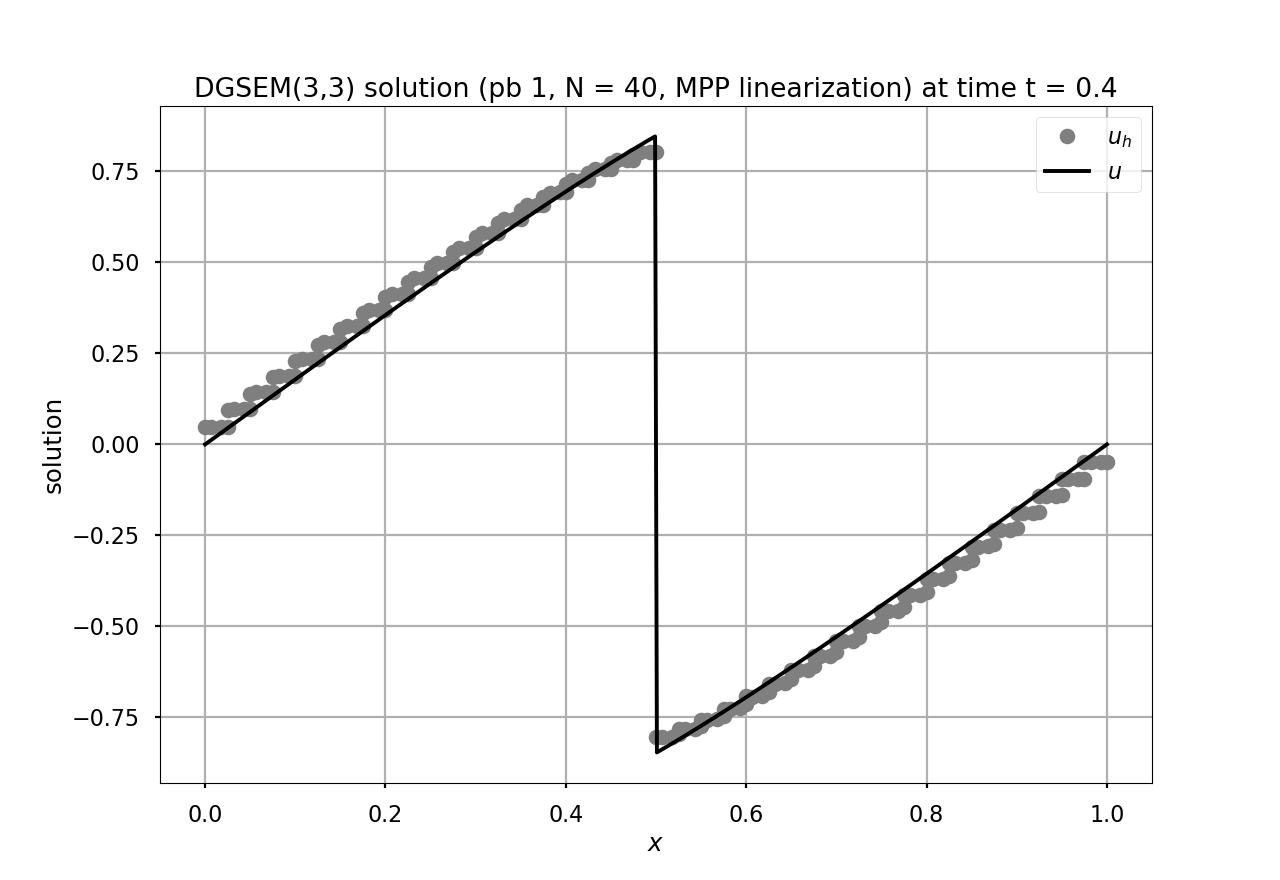}}%
\subfloat{\includegraphics[width=4.3cm]{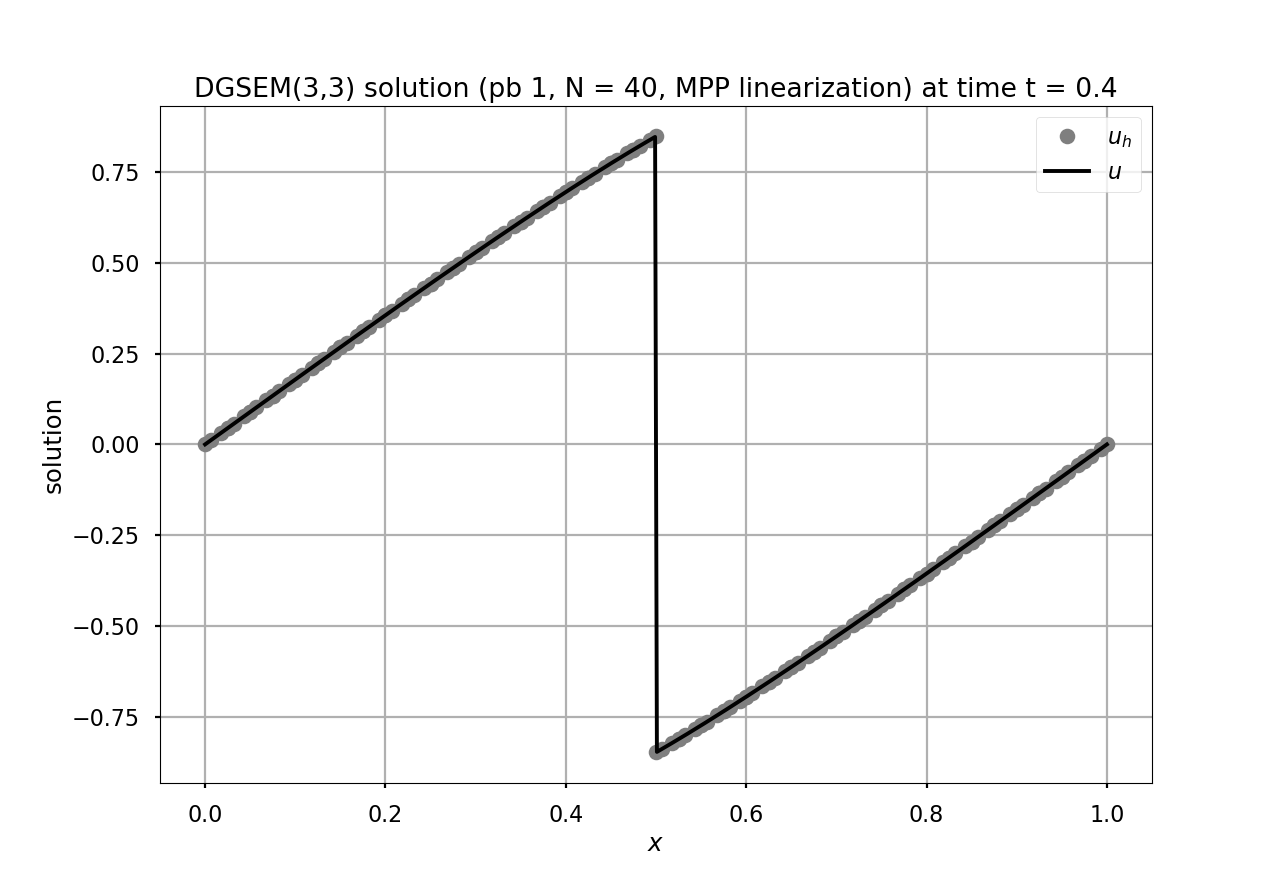}}\\\vspace{-0.25cm} \subfloat{\includegraphics[width=4.3cm]{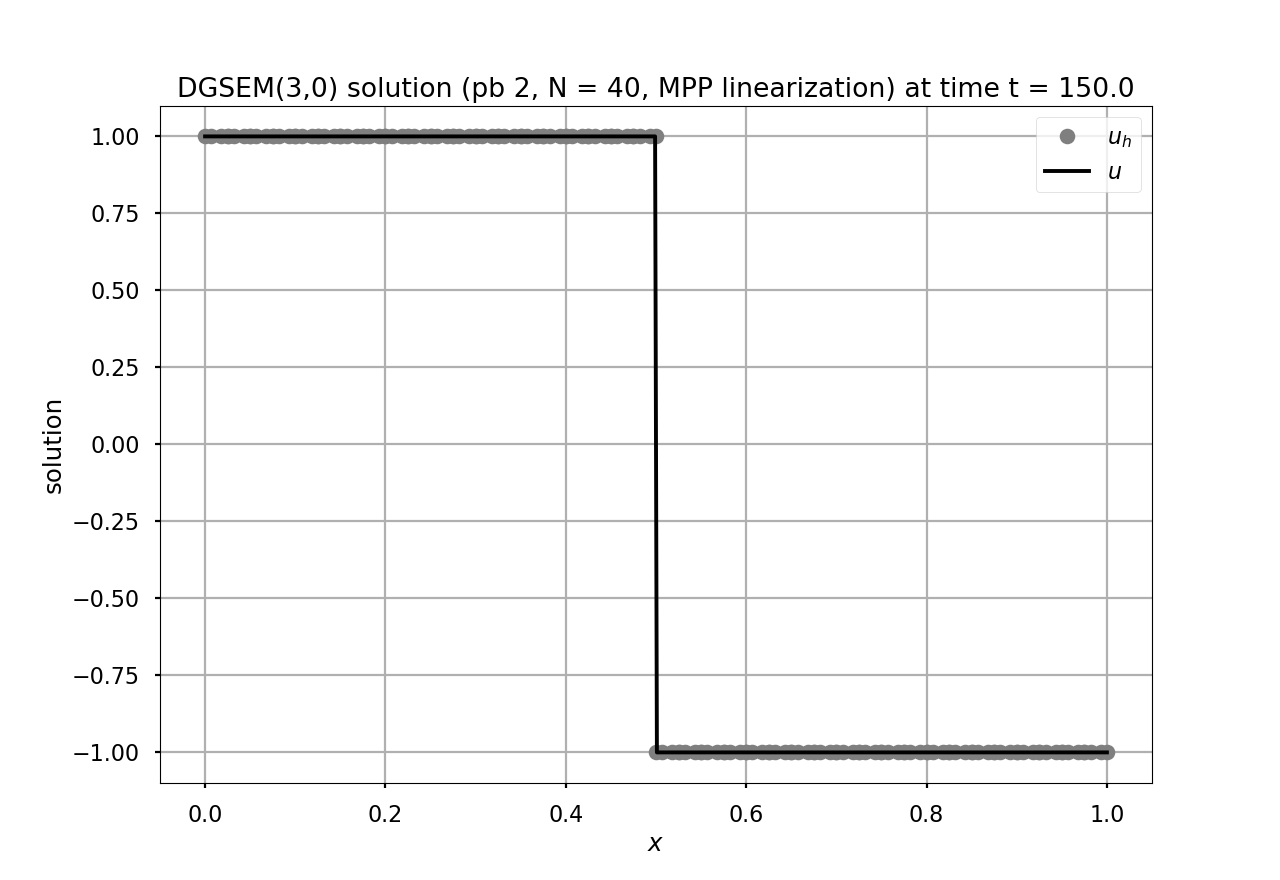}}%
\subfloat{\includegraphics[width=4.3cm]{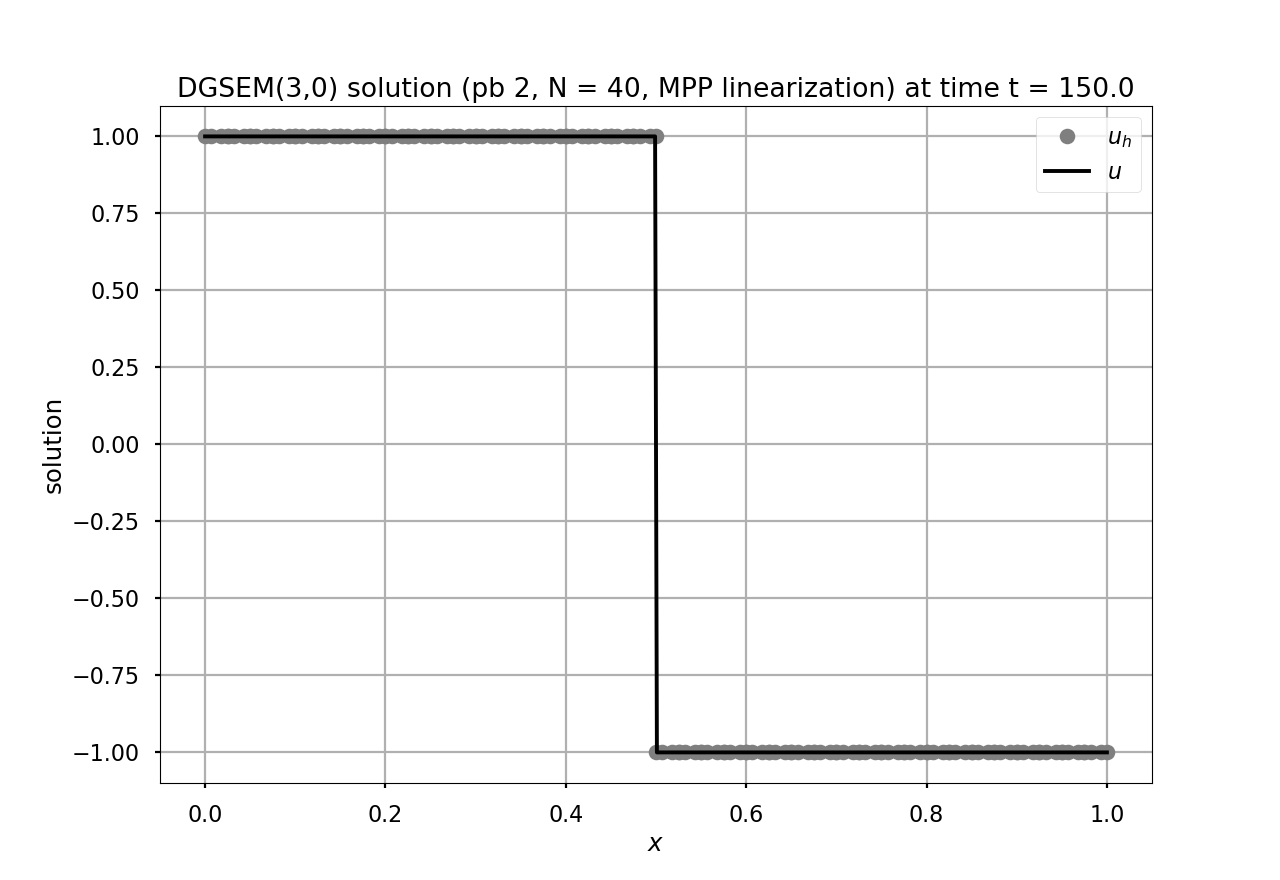}}%
\subfloat{\includegraphics[width=4.3cm]{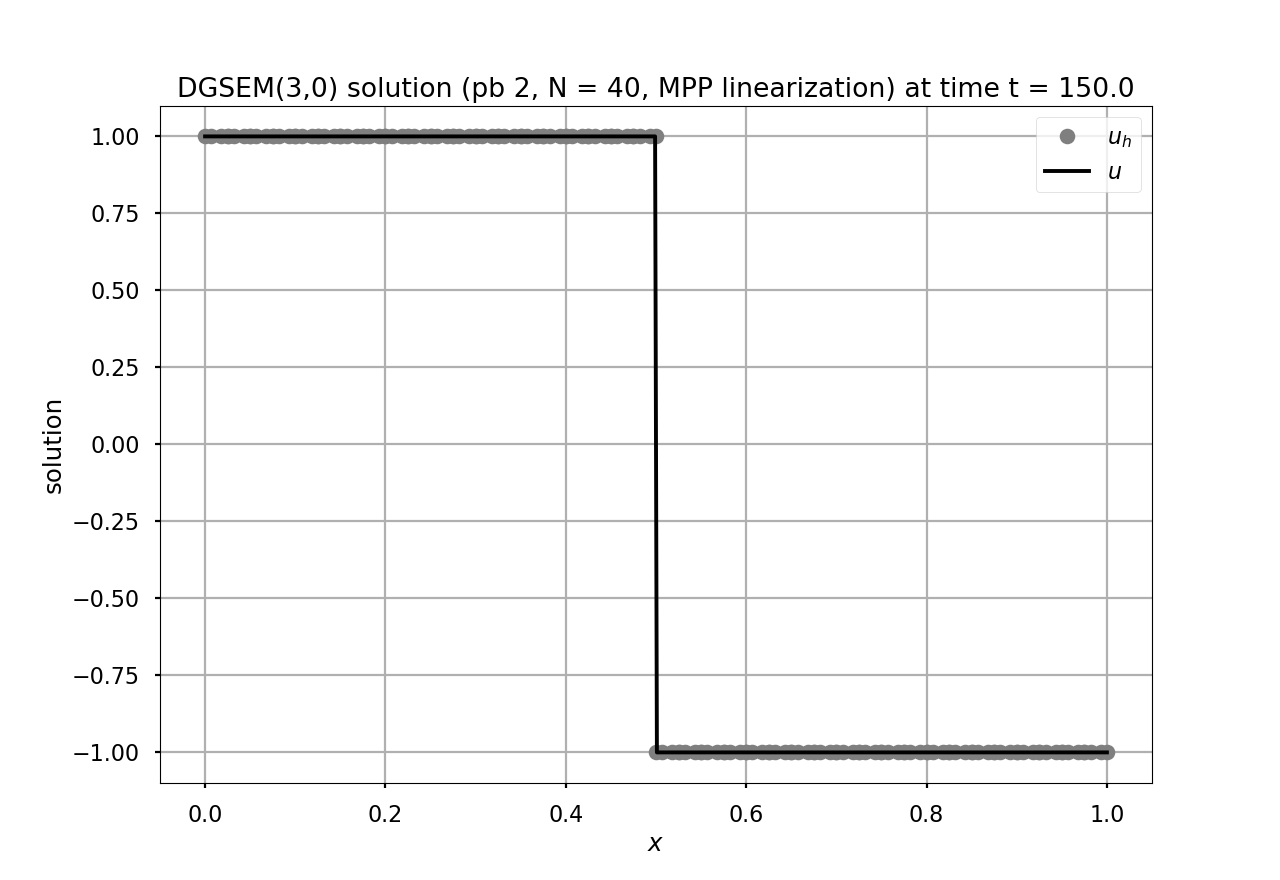}}\\\vspace{-0.25cm} 
\setcounter{subfigure}{0}
\subfloat[no graph viscosity]{\includegraphics[width=4.3cm]{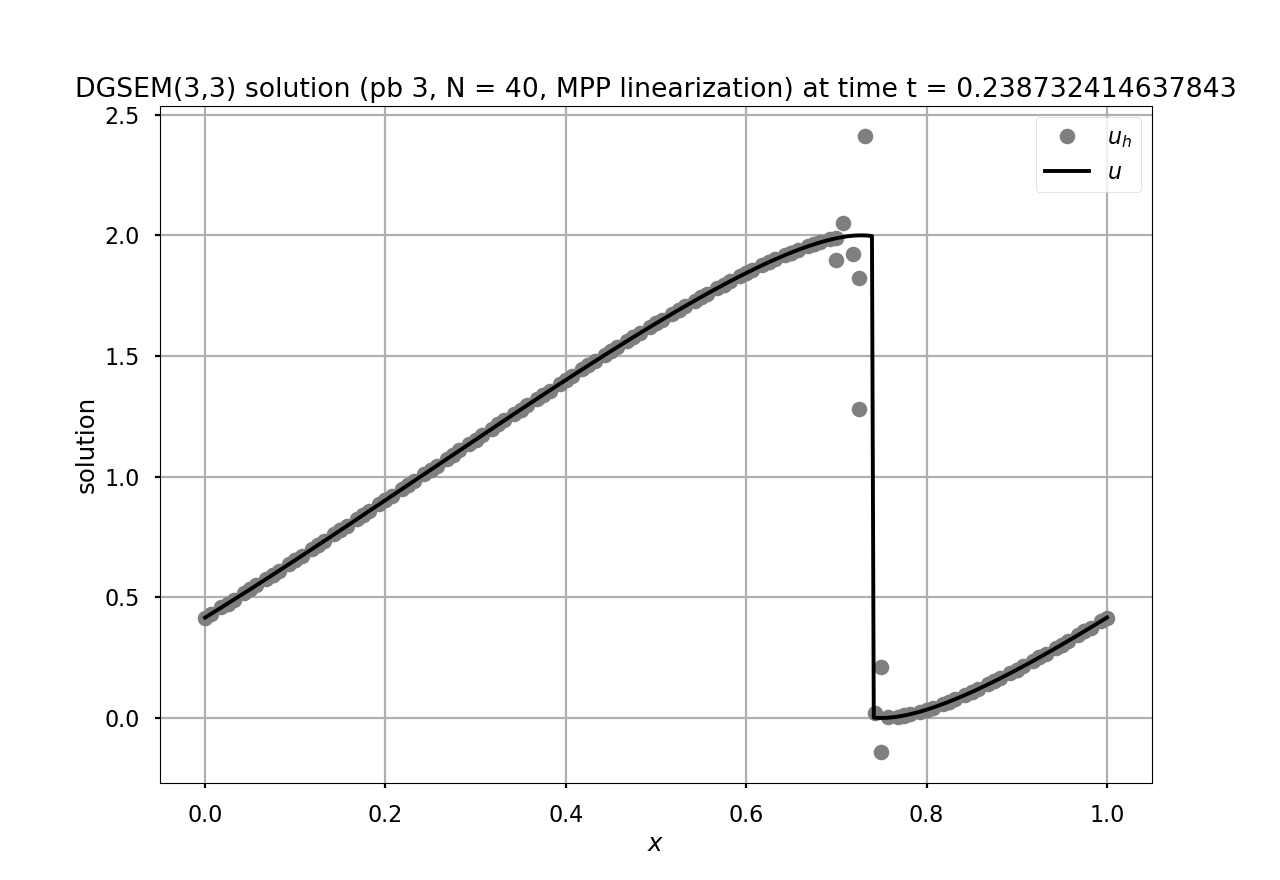}}
\subfloat[graph viscosity]{\includegraphics[width=4.3cm]{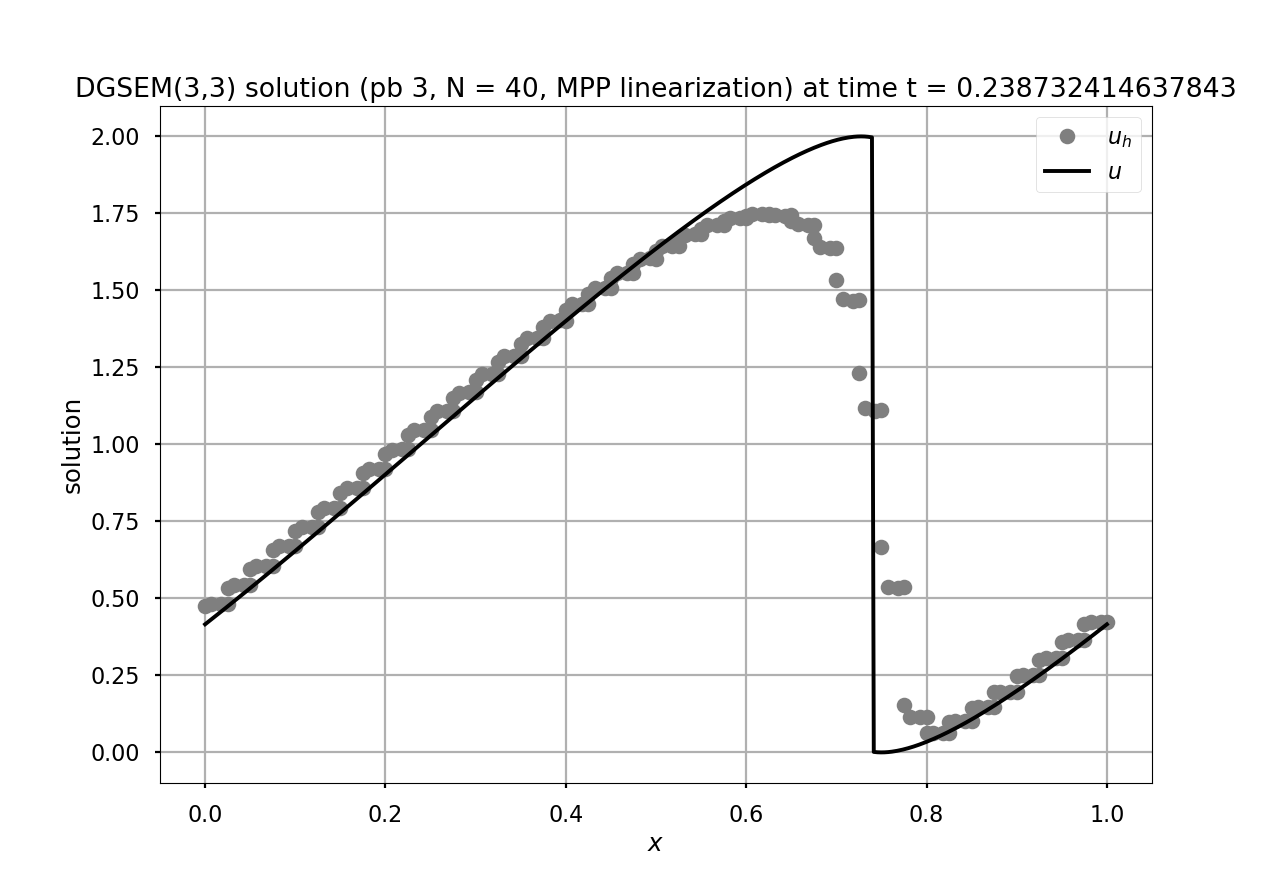}}
\subfloat[adapted graph viscosity]{\includegraphics[width=4.3cm]{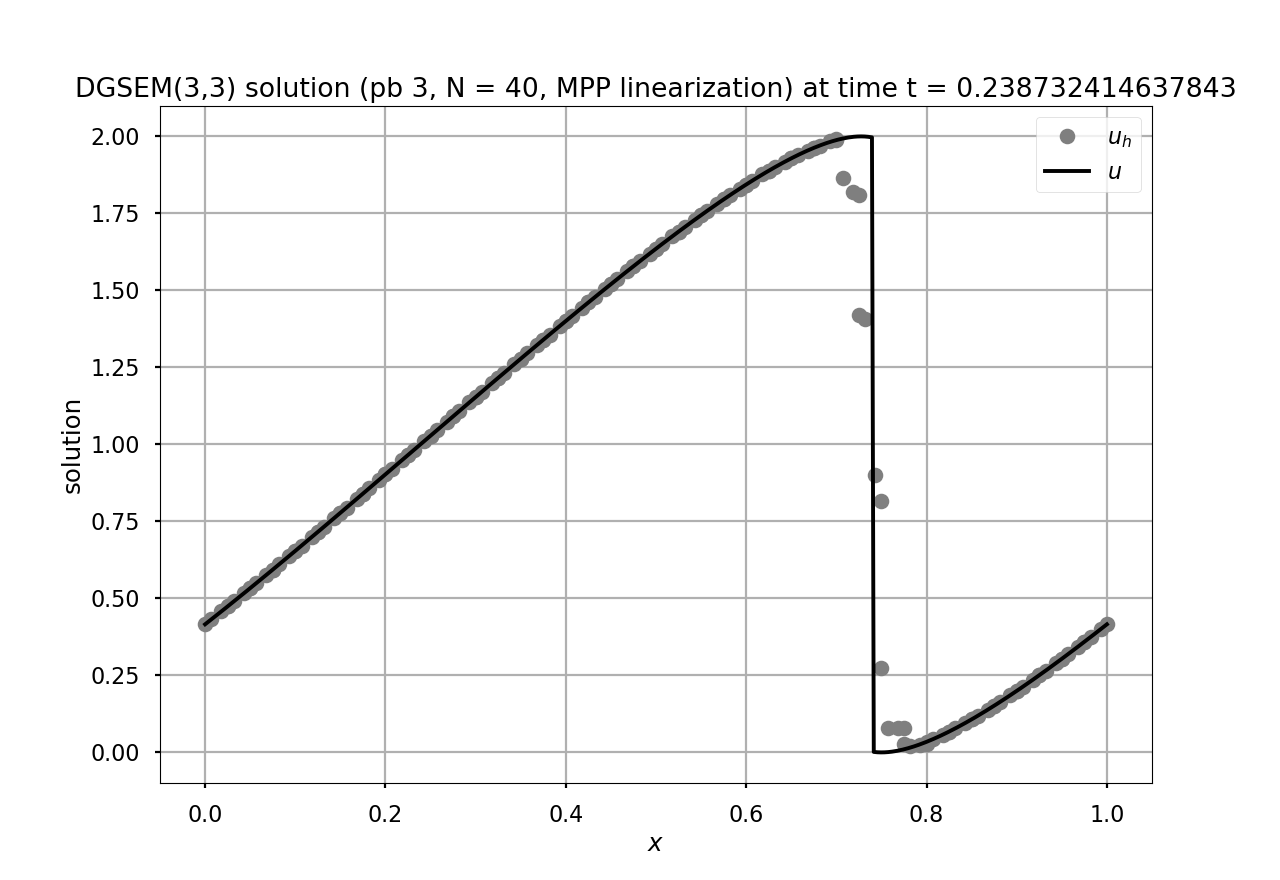}}
\caption{Inviscid Burgers' equation: solutions to problems 1 (top), 2 (middle), and 3 (bottom) defined in \cref{tab:def_pbs}. The $p+1=4$ DOFs per mesh element are displayed at the final time (bullets) and compared to the exact solution (lines).}
\label{fig:burgers}
\end{figure} 

\begin{figure}
\centering
\subfloat{\includegraphics[width=4.3cm]{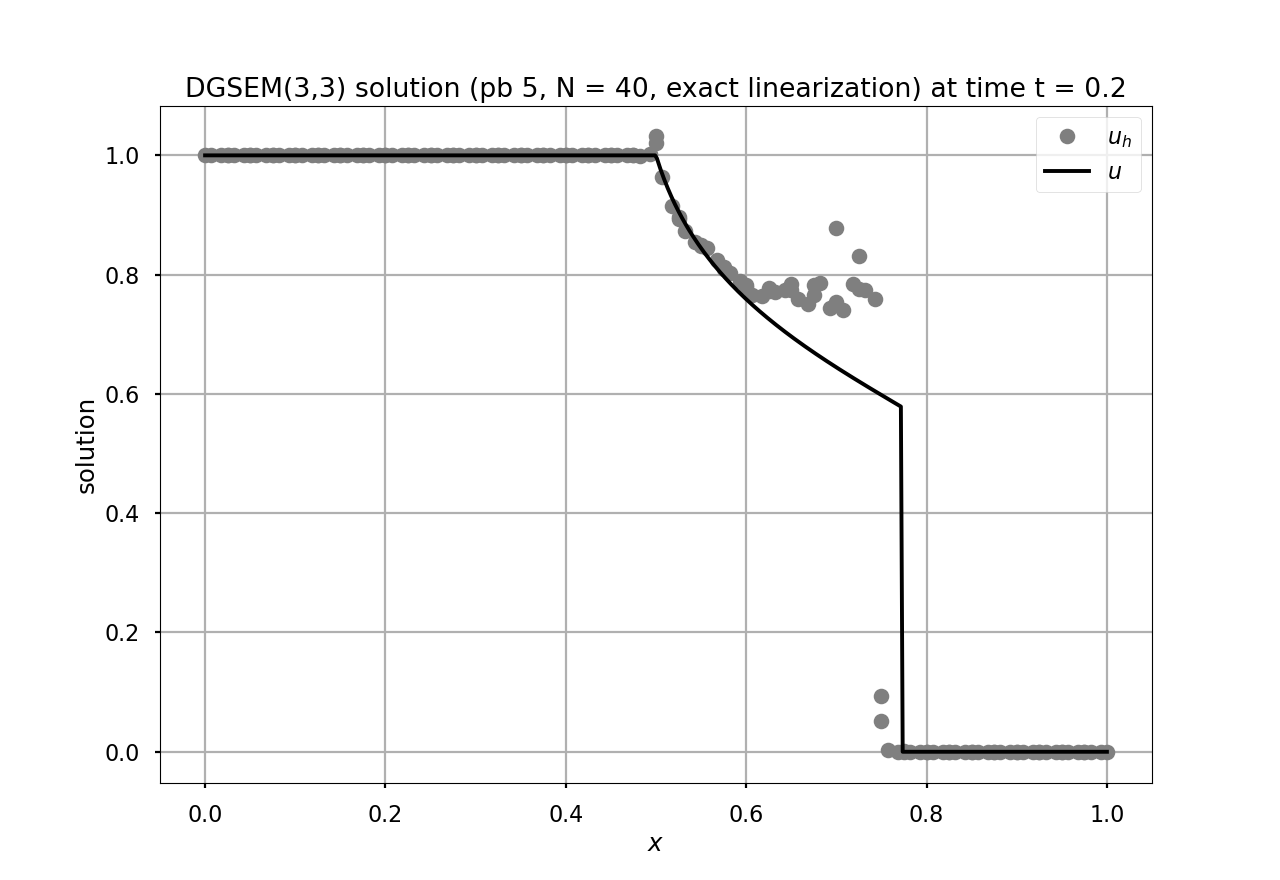}}%
\subfloat{\includegraphics[width=4.3cm]{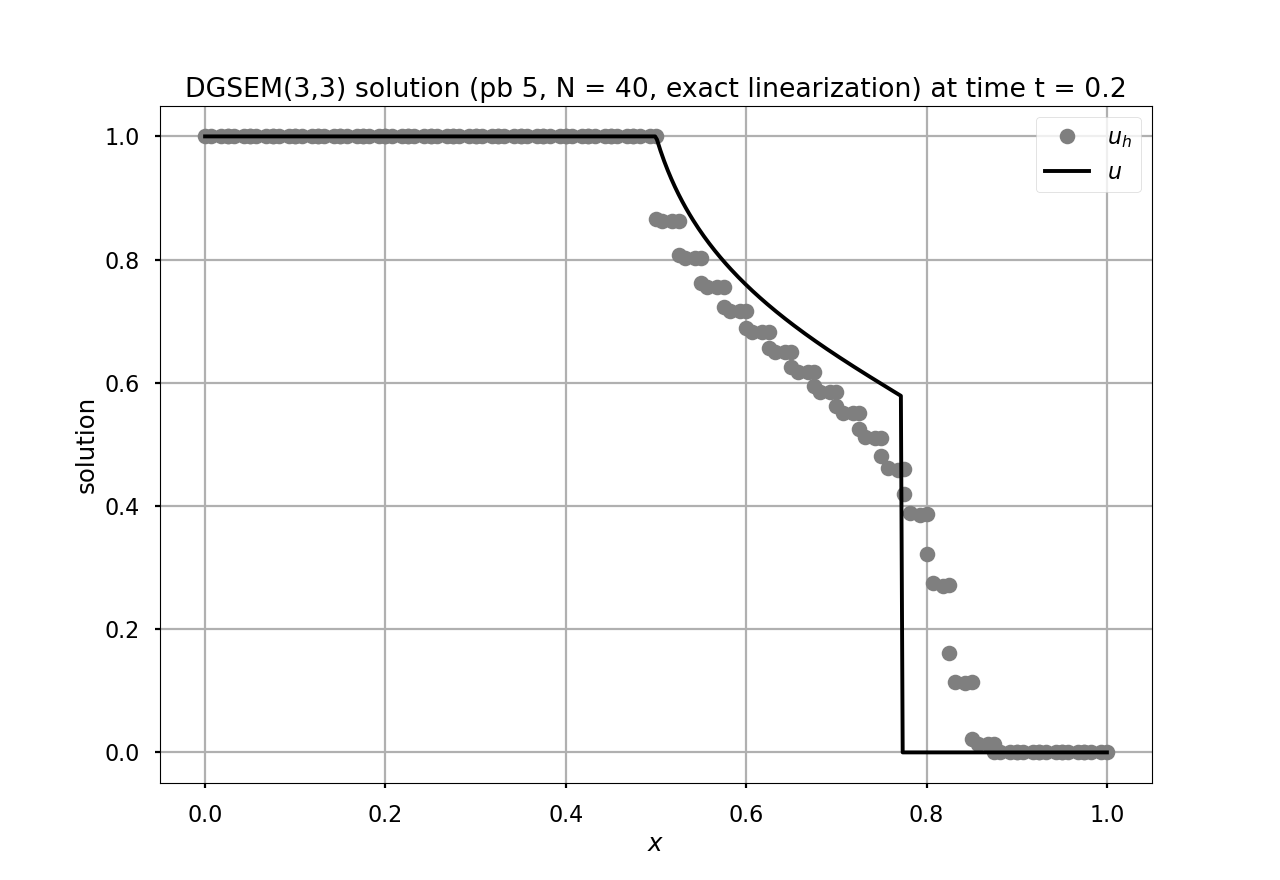}}%
\subfloat{\includegraphics[width=4.3cm]{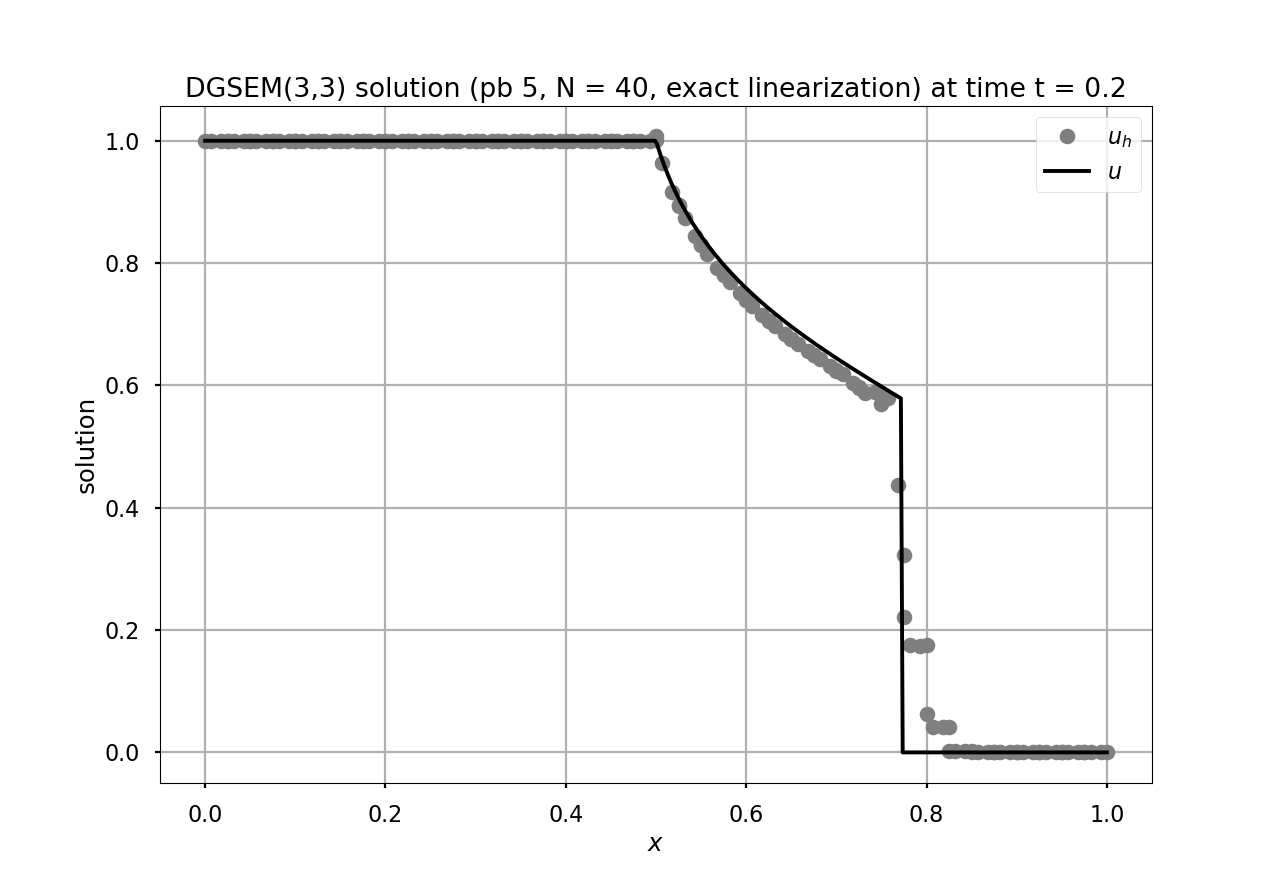}}\\ \vspace{-0.25cm}
\setcounter{subfigure}{0}
\subfloat[no graph viscosity]{\includegraphics[width=4.3cm]{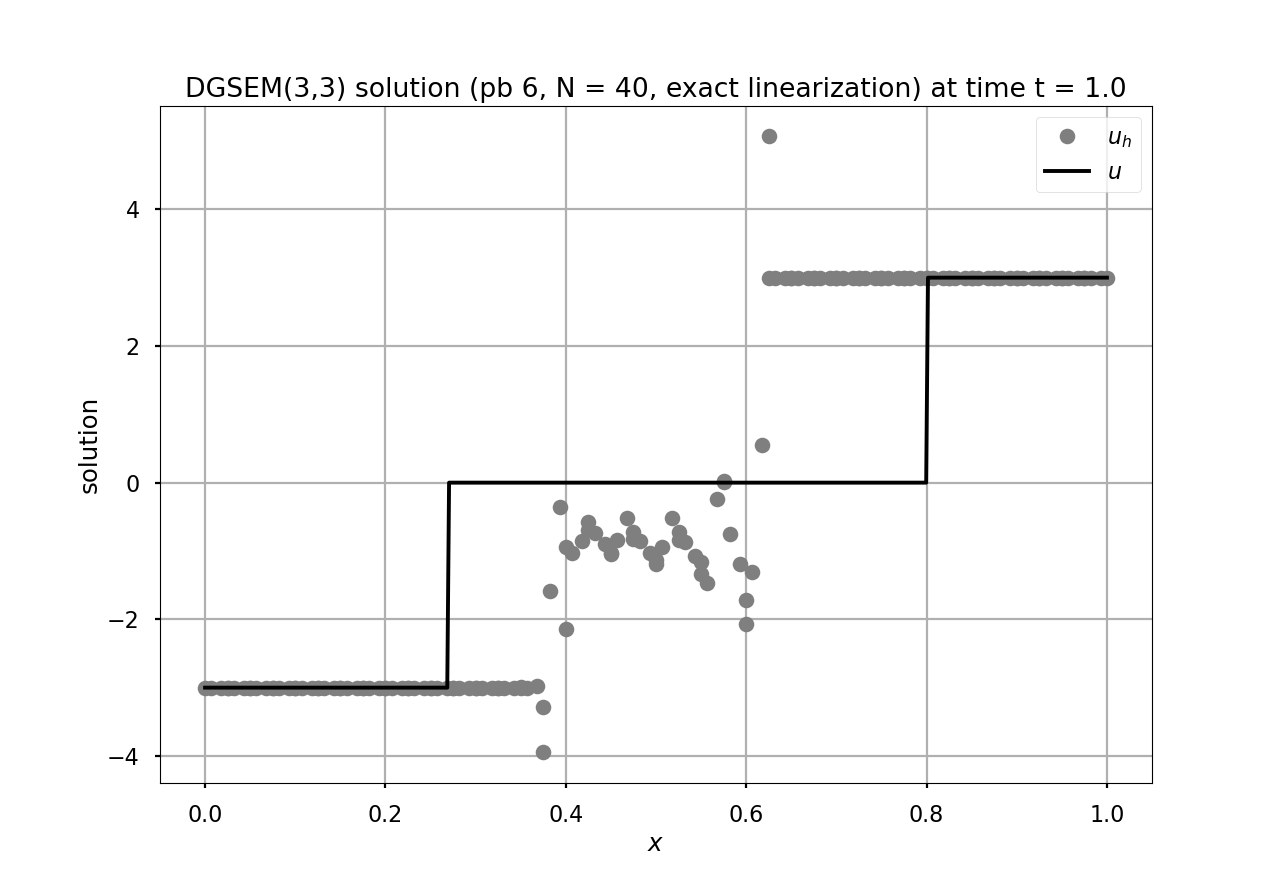}}
\subfloat[graph viscosity]{\includegraphics[width=4.3cm]{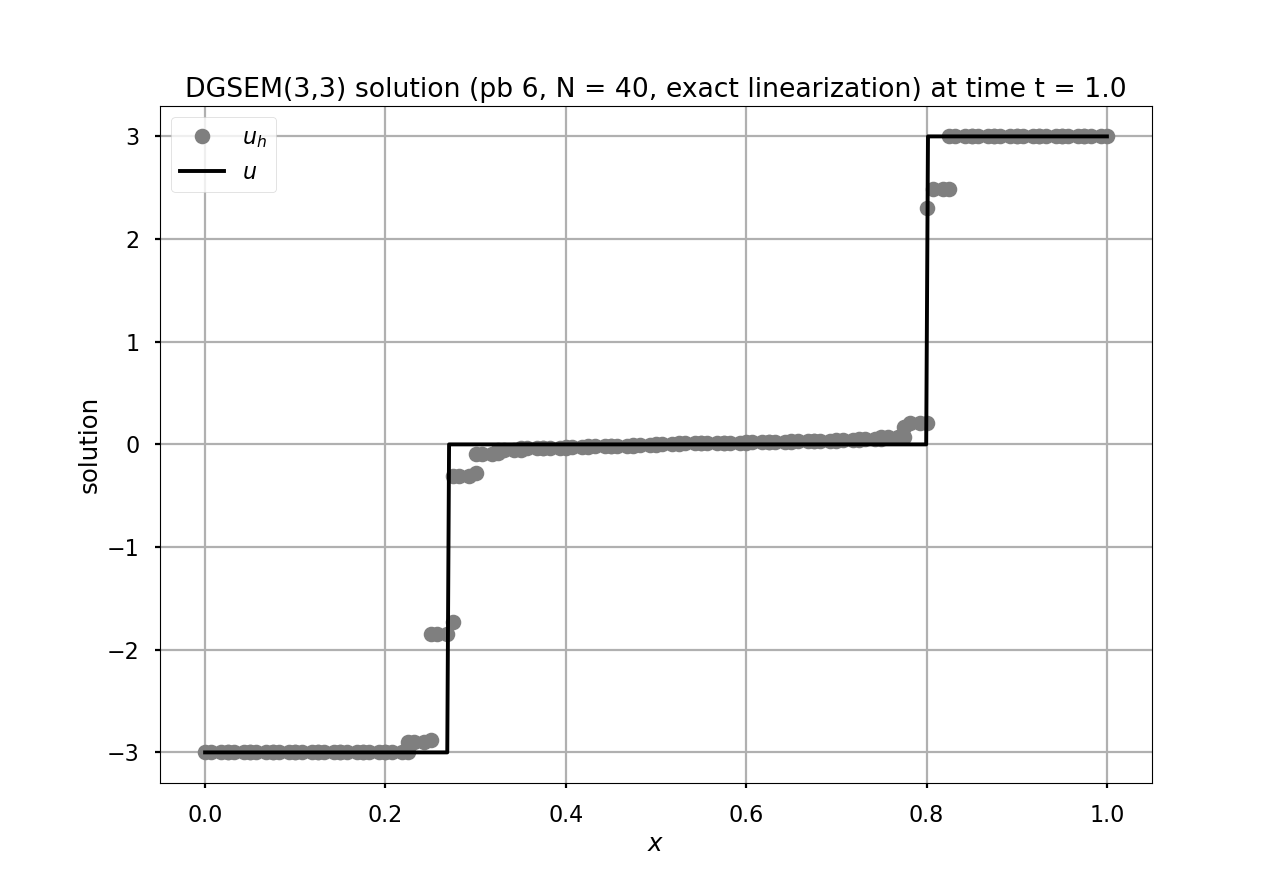}}
\subfloat[adapted graph viscosity]{\includegraphics[width=4.3cm]{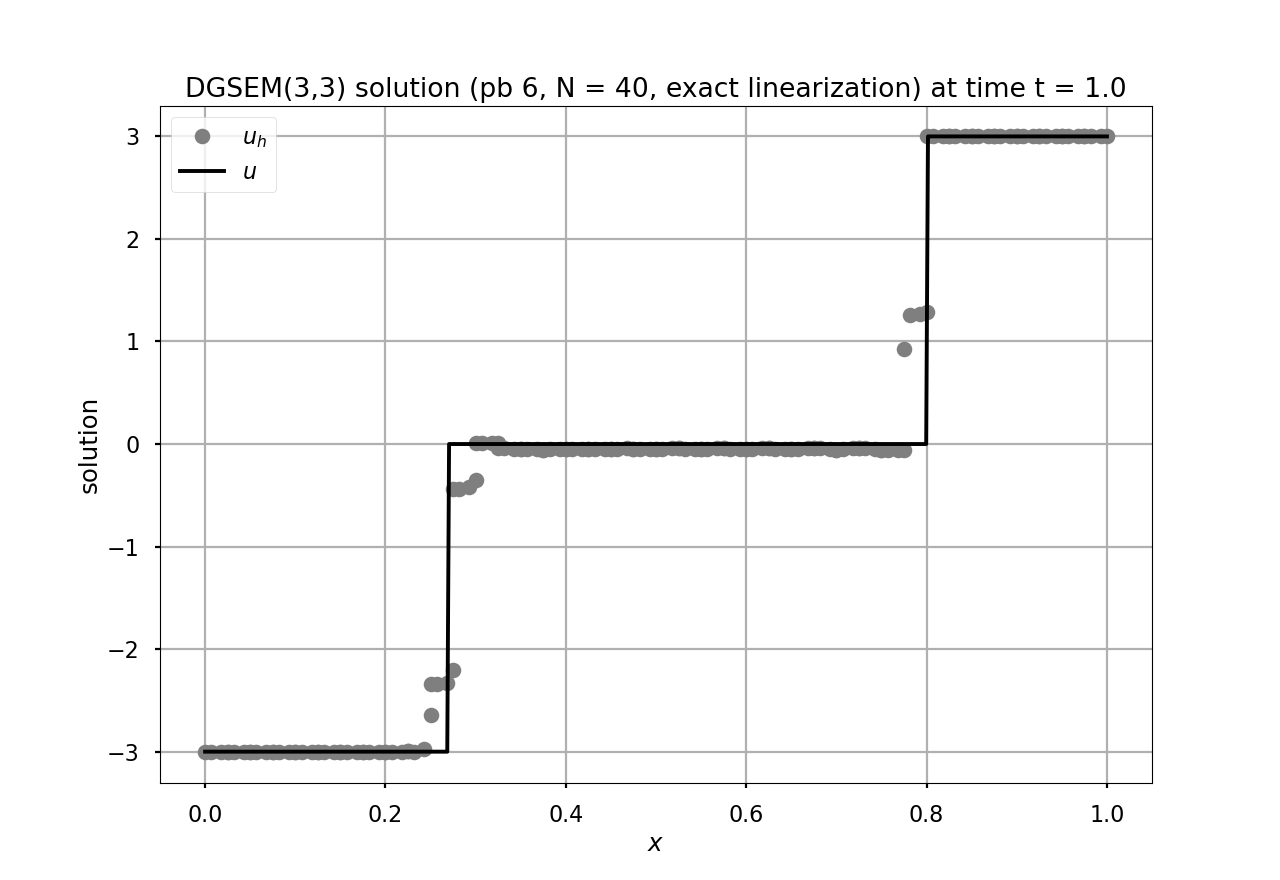}}
\caption{Buckley-Leverett equation: solutions to problems 4 (top) and 5 (bottom) defined in \cref{tab:def_pbs}. The $p+1=4$ DOFs per mesh element are displayed at the final time (bullets) and compared to the exact solution (lines).}
\label{fig:bukley_leverett}
\end{figure}

\section{Concluding remarks}\label{sec:conclusions}

We propose an analyze artificial viscosities in DGSEM schemes with implicit time stepping for the discretization of nonlinear scalar conservation laws in multiple space dimensions. We consider both a backward Euler time stepping for steady-state simulations, and a space-time DGSEM for time resolved simulations. The artificial dissipation is a first-order graph viscosity local to the space-time discretization elements. The schemes have no time step restriction, are MPP, satisfy a fully discrete inequality for every admissible convex entropy, and the associated discrete problems are well-posed. Numerical experiments in one space dimension are proposed to illustrate the properties of these schemes. Even being ES, the DGSEM without graph viscosity is not MPP and may fail to capture the entropy weak solution. For the sake of illustration, we also propose a local tuning of the artificial viscosity from using a troubled-cell indicator to reduce the dissipation, while keeping robustness and stability of the scheme. Future work will focus on the extension of this approach to systems of conservation laws. The use of the  present schemes with graph viscosity in the flux-corrected transport limiter to impose bounds on the high-order DGSEM solution is another possible direction of research.


%
%

%
%

%
%
\bibliographystyle{siamplain}
\bibliography{../../../BIBLIO_BIB/biblio_generale}

\end{document}